\documentclass[pdflatex,sn-mathphys-num]{sn-jnl}

\usepackage{graphicx}
\usepackage{amsmath,amssymb,amsfonts}
\usepackage{amsthm}
\usepackage[title]{appendix}
\usepackage{xcolor}
\let\orcidlogo\relax
\usepackage{orcidlink}

\usepackage{mathtools}
\mathtoolsset{showonlyrefs}
\usepackage{bm}
\usepackage{tikz}
\usepackage{placeins}
\usetikzlibrary{calc,hobby,arrows.meta}

\definecolor{surface}{HTML}{F5F5F5}

\theoremstyle{thmstyleone}
\newtheorem*{externaltheorem}{Theorem}
\newtheorem{theorem}{Theorem}
\newtheorem{corollary}{Corollary}
\newtheorem*{externalcorollary}{Corollary}
\newtheorem{lemma}{Lemma}
\newtheorem{proposition}{Proposition}

\theoremstyle{thmstyletwo}

\newtheorem{remark}{Remark}

\theoremstyle{thmstylethree}

\newcommand{\R}{\mathbb{R}}

\DeclareMathOperator{\diag}{diag}
\DeclareMathOperator{\Diag}{Diag}
\DeclareMathOperator{\Cov}{Cov}
\DeclareMathOperator{\Var}{Var}
\DeclareMathOperator{\tr}{tr}
\newcommand{\DPP}{\mathrm{DPP}}
\newcommand{\KL}{\mathrm{KL}}
\newcommand{\E}{\mathbb{E}}

\newcommand{\succsm}{\succeq_{\mathrm{sm}}}
\newcommand{\succism}{\succeq_{\mathrm{ism}}}

\DeclarePairedDelimiter{\ang}{\langle}{\rangle}

\begin{document}

\title[DPP Approximation under Dependence]{Determinantal Point Process Approximation under Positive and Negative Dependence}
\author*[1,2]{%
  \fnm{So} \sur{Anzai}
  \orcidlink{0009-0005-8889-2529}%
}
\email{anzai.so@ism.ac.jp}

\author[1,2]{%
  \fnm{Hideitsu} \sur{Hino}
  \orcidlink{0000-0002-6405-4361}%
}
\email{hino@ism.ac.jp}

\affil[1]{%
  \orgname{Graduate Institute for Advanced Studies, SOKENDAI},
  \orgaddress{%
    \street{Shonan Village},
    \city{Hayama},
    \postcode{240-0193},
    \state{Kanagawa},
    \country{Japan}}}

\affil[2]{%
  \orgname{The Institute of Statistical Mathematics},
  \orgaddress{%
    \street{10-3 Midori-cho},
    \city{Tachikawa},
    \postcode{190-8562},
    \state{Tokyo},
    \country{Japan}}}

\abstract{%
Determinantal point processes (DPPs) are widely used as probabilistic models for diverse random subsets, but their approximation error under model misspecification has not been fully characterized. We study the population-level approximation of a strictly positive target distribution $p^*$ by DPPs under the forward Kullback--Leibler divergence. Using information-geometric analysis and the standard quality--diversity decomposition of an $L$-ensemble kernel, in which the diagonal quality component $Q$ encodes item-specific weights and the diversity component $D$ controls repulsive interactions among items, we show that the quality component can be chosen uniquely to match all first-order inclusion probabilities of $p^*$. The DPP approximation problem therefore reduces to the optimization of the diversity component. This reduction yields a global optimality result for attractively dependent targets: under conditions including weak positive association, the independent product distribution with the same first-order marginals, corresponding to $D = I$, is an optimal DPP approximation. For more general target distributions, positively correlated pairs yield lower bounds on the approximation error. On the repulsive side, a matching of disjoint negatively correlated pairs yields an upper bound on the approximation error, or equivalently a guaranteed improvement over the independent approximation. We further study local optimality around $D = I$ and, more generally, around block-diagonal diversity matrices by analyzing perturbations between their blocks.
}
\keywords{Determinantal point processes, Information geometry, Positive and negative dependence, Model misspecification}

\maketitle

\section{Introduction}

Determinantal point processes (DPPs) are widely used as probabilistic models for random subsets with repulsive dependence \cite{kulesza-2012-determinantal}. Originally introduced in physics to describe systems of fermions \cite{macchi-1975-coincidence}, DPPs have subsequently found applications in a broad range of fields, including document summarization \cite{kulesza-2011-learning}, recommender systems \cite{gartrell-2016-bayesian}, Bayesian variable selection \cite{kojima-2016-determinantal}, neural spike modeling \cite{snoek-2013-determinantal}, and bioinformatics \cite{batmanghelich-2014-diversifying}. These applications have motivated a variety of methods for learning DPP kernels. Maximum likelihood approaches include EM algorithms \cite{gillenwater-2014-expectation}, fixed-point algorithms \cite{mariet-2015-fixed-point}, minorization--maximization algorithms \cite{kawashima-2023-minorization}, and proximal algorithms \cite{castella-2026-kernel}. Bayesian approaches based on MCMC have also been proposed \cite{affandi-2014-learning}. Other methods use moments or closed-form estimators \cite{urschel-2017-learning,gourieroux-2025-simple}, or impose structured kernels to make learning scalable \cite{mariet-2016-kronecker,gartrell-2017-lowrank}. Efficient sampling from DPPs has also been extensively studied, including classical spectral samplers \cite{hough-2006-determinantal, kulesza-2012-determinantal}, rapidly mixing MCMC algorithms for approximate sampling from $k$-DPPs \cite{anari-2016-mcmc}, and exact samplers with sublinear-time preprocessing \cite{derezinski-2019-exact}.

Despite these developments, a more basic question remains: how well can the DPP family approximate an arbitrary distribution on random subsets? An arbitrary probability distribution on $2^{[N]}$ has $2^N-1$ degrees of freedom, whereas a DPP is described by a symmetric $N \times N$ kernel with at most $N(N+1)/2$ continuous parameters. A DPP is a probability distribution on subsets whose inclusion probabilities are given by principal minors of a positive semidefinite kernel with eigenvalues in $[0,1]$; a formal definition is given in Section~\ref{sec:dpp-preliminary}. Moreover, DPPs satisfy strong negative-dependence properties \cite{borcea-2009-negative}. The DPP family is therefore much smaller than the space of all distributions on $2^{[N]}$, and model misspecification cannot be avoided in general. In particular, repulsive dependence alone does not guarantee exact representation by a DPP.

For a strictly positive target distribution $p^*$, we study the population-level approximation error
\begin{equation}
  \inf_{p \text{ is a DPP on } [N]} \KL(p^*\|p).
\end{equation}
Here ``population-level'' means that the error is defined directly in terms of the target distribution $p^*$, rather than an empirical distribution obtained from finitely many samples. We therefore study model misspecification separately from the additional finite-sample error involved in learning a DPP from data. Because the independent product distribution with the same first-order marginals as $p^*$ is itself a DPP, it provides a canonical baseline. Our central question is when a DPP with nontrivial repulsive interactions can improve upon this independent approximation.

Brunel et al.~\cite{brunel-2017-maximum} analyze the geometry of the expected log-likelihood and the asymptotic behavior of the maximum likelihood estimator when the target distribution is itself a DPP. More generally, information-geometric KL projections and model approximation errors have been studied outside the DPP setting \cite{amari-2001-information, montufar-2012-scaling}. A related problem is to test, from samples, whether an unknown distribution belongs to the DPP family; Gatmiry et al.~\cite{gatmiry-2020-testing} give such a test under the $\ell_1$ distance.

Our main result, established in Section~\ref{sec:reduction}, gives a structural reduction of the DPP approximation problem. We show that the optimization separates into matching the first-order inclusion probabilities of $p^*$ and optimizing the diversity component of a kernel representation, which is responsible for the repulsive interactions of the DPP. This matching is always possible by a unique choice of the quality component. Once the first-order marginals have been matched, the remaining approximation problem depends only on the diversity component. The remaining results characterize when a nontrivial diversity component improves on the independent baseline.

Section~\ref{sec:attractive} considers attractively dependent target distributions. We identify broad conditions under which no such improvement is possible. In particular, for weakly positively associated targets, the independent product distribution is globally optimal among all DPP approximations. Thus, under these conditions, introducing the repulsive structure of a DPP cannot reduce the approximation error. We complement this global result with lower bounds based on positively correlated pairs and a local analysis around the independent approximation.

Section~\ref{sec:repulsive} turns to repulsively dependent targets, for which we obtain constructive improvement guarantees. By combining disjoint negatively correlated pairs, we construct DPP approximations whose improvement over the independent baseline is quantified by pairwise mutual information. We also give a local criterion for determining whether such block-structured approximations can be further improved by interactions between the blocks.

\section{Preliminaries}

\subsection{Determinantal point processes}
\label{sec:dpp-preliminary}
Let $[N]=\{1,2,\dots,N\}$ be a finite ground set. A random subset $Y \subset [N]$ is said to follow a determinantal point process (DPP) with marginal kernel $K \in \R^{N \times N}$ if
\begin{equation}
  \Pr(A \subset Y) = \det K_A \quad (A \subset [N]),
  \label{eq:DPP}
\end{equation}
where $K_A$ denotes the principal submatrix of $K$ indexed by $A$, and we adopt the convention $\det K_\emptyset = 1$. The marginal kernel $K$ is symmetric positive semidefinite and all its eigenvalues lie in $[0,1]$, which we write as $O \preceq K \preceq I$.

If all eigenvalues of $K$ lie in $(0,1)$, i.e., $O \prec K \prec I$, then the DPP admits an $L$-ensemble representation:
\begin{equation}
  p_L(A) \coloneqq \Pr(Y = A) = \frac{\det L_A}{\det(L+I)} \quad (A \subset [N]).
  \label{eq:L-ensemble}
\end{equation}
Here $L$ is also symmetric positive definite, and the marginal kernel $K$ and the $L$-ensemble kernel $L$ are related by
\begin{equation}
  K = L(L+I)^{-1}, \quad L = K(I-K)^{-1}.
  \label{eq:L-K-relation}
\end{equation}

A DPP admitting the representation \eqref{eq:L-ensemble} is called an $L$-ensemble. For a strictly positive target distribution $p^*$, restricting the optimization to $L$-ensembles does not change the approximation error. We therefore write
\begin{equation}
  \KL(p^*\|\DPP) \coloneqq \inf_{p \in \DPP} \KL(p^*\|p) = \inf_{p \in L\text{-ensemble}} \KL(p^*\|p).
\end{equation}

\subsection{Information-Geometric Coordinates for Random-Subset Distributions and DPPs}
\label{sec:ig-coordinates}
A basic probability model on the power set $2^{[N]}$ is the log-linear model
\begin{align}
  p_\theta(A) &= \exp\left[\sum_{\emptyset \neq I \subset [N]} \theta^I 1\{I \subset A\} - \psi(\theta)\right], \label{eq:log-linear} \\
  \psi(\theta) &= \log \sum_{J \subset [N]} \exp\left[\sum_{\emptyset \neq I \subset [N]} \theta^I 1\{I \subset J\}\right].
\end{align}
Here the superscript $I$ is a set-valued index: for each nonempty subset $I \subset [N]$, $\theta^I$ denotes the interaction parameter associated with the joint inclusion of the elements in $I$. For example, $\theta^{\{1,2\}}$ represents the interaction between items $1$ and $2$. The function $\psi$ is the log-partition function of the model, also called the potential function. 

Every strictly positive probability distribution on $2^{[N]}$ admits a unique representation of the form \eqref{eq:log-linear} \cite{amari-2001-information}. Indeed, M\"obius inversion on the Boolean lattice gives
\begin{equation}
    \theta^I = \sum_{J \subset I} (-1)^{|I| - |J|} \log p(J), \quad \emptyset \neq I \subset [N],
\end{equation}
while $\psi(\theta) = -\log p(\emptyset)$. Thus, $\theta=(\theta^I)_{\emptyset \neq I \subset [N]}$ provides a global coordinate system for the space $\mathcal{P}$ of all strictly positive probability distributions on $2^{[N]}$. For $p \in \mathcal{P}$, we write $\theta(p)$ for the $\theta$-coordinate vector of $p$.

The log-linear model is a regular and minimal exponential family. In the standard terminology of information geometry, $\theta$ forms an $e$-affine coordinate system. Moreover, the potential function $\psi$ is strictly convex, and the Fisher information matrix $(g_{I J}(\theta))_{\emptyset \neq I,J \subset [N]}$ is given by
\begin{equation}
  g_{I J}(\theta) = \frac{\partial^2 \psi}{\partial \theta^I \partial \theta^J}(\theta) = \Cov_\theta(1\{I \subset Y\},1\{J \subset Y\}).
\end{equation}

The $m$-affine coordinates $(\eta_I)_{\emptyset \neq I \subset [N]}$ dual to $\theta$ are defined as the Legendre-dual coordinates of $\psi$:
\begin{equation}
  \eta_I(p) = \frac{\partial \psi}{\partial \theta^I}(\theta(p)).
  \label{eq:eta-legendre-dual}
\end{equation}
These coordinates coincide with the expectation parameters:
\begin{equation}
  \eta_I(p) = \E_p[1\{I \subset Y\}].
\end{equation}

Following \cite{amari-2001-information}, we decompose these coordinates according to the cardinality of the indexing subsets:
\begin{align}
  \theta &= (\theta_{(1)},\theta_{(2)},\dots,\theta_{(N)}),\\
  \eta &= (\eta_{(1)},\eta_{(2)},\dots,\eta_{(N)}).
\end{align}
For example,
\begin{equation}
  \theta_{(1)} = (\theta^{\{1\}}, \theta^{\{2\}}, \dots, \theta^{\{N\}}), \quad \theta_{(2)} = (\theta^{\{1,2\}}, \theta^{\{1,3\}}, \dots, \theta^{\{N-1,N\}}), \dots.
\end{equation}
This hierarchy gives rise to the $k$-cut mixed coordinate system, in which coordinates up to order $k$ are represented by $\eta$ and the remaining higher-order coordinates are represented by $\theta$:
\begin{equation}
  \omega_k \coloneqq (\eta_{(1)}, \dots, \eta_{(k)}; \theta_{(k+1)}, \dots, \theta_{(N)}).
  \label{eq:k-cut-mixed-coordinate}
\end{equation}

We use the following standard identity for the KL divergence.

\begin{externaltheorem}[Extended Pythagorean Theorem; {\cite[Theorem~3.7]{amari-2000-methods}}]
  For any three points $p,q,r \in \mathcal{P}$,
  \begin{equation}
    \KL(p\|r) = \KL(p\|q) + \KL(q\|r) + \ang*{\eta(p) - \eta(q), \theta(q) - \theta(r)}, 
    \label{eq:extended-pythagoras}
  \end{equation}
  where
  \begin{equation}
    \ang*{\eta(p) - \eta(q), \theta(q) - \theta(r)} = \sum_{\emptyset \neq I \subset [N]} (\eta_I(p) - \eta_I(q))(\theta^I(q) - \theta^I(r)).
  \end{equation}
\end{externaltheorem}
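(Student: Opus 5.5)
The identity follows from writing every KL divergence in the potential/dual-potential form of the exponential family and then checking that all non-bilinear terms cancel.

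\textbf{Step 1: a closed form for the KL divergence.} For $p,q \in \mathcal{P}$ with coordinates $\theta(p),\theta(q)$, I will use
\begin{equation}
  \log p(A) = \sum_{\emptyset \neq I \subset [N]} \theta^I(p)\, 1\{I \subset A\} - \psi(\theta(p)).
\end{equation}
Taking the expectation under $p$ of $\log p - \log q$ and using $\eta_I(p) = \E_p[1\{I \subset Y\}]$ gives
\begin{equation}
  \KL(p\|q) = \ang*{\eta(p), \theta(p) - \theta(q)} - \psi(\theta(p)) + \psi(\theta(q)).
\end{equation}
This is the Bregman divergence of $\psi$, since $\eta(p) = \nabla\psi(\theta(p))$ by \eqref{eq:eta-legendre-dual}.

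\textbf{Step 2: expand the right-hand side.} I will write out the three terms on the right of \eqref{eq:extended-pythagoras} with this formula:
\begin{align}
  \KL(p\|q) &= \ang{\eta(p),\theta(p)-\theta(q)} - \psi(\theta(p)) + \psi(\theta(q)),\\
  \KL(q\|r) &= \ang{\eta(q),\theta(q)-\theta(r)} - \psi(\theta(q)) + \psi(\theta(r)).
\end{align}
The $\psi(\theta(q))$ terms cancel in the sum. What remains is
\begin{equation}
  \ang{\eta(p),\theta(p)-\theta(q)} + \ang{\eta(q),\theta(q)-\theta(r)} - \psi(\theta(p)) + \psi(\theta(r)).
\end{equation}

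\textbf{Step 3: add the cross term and compare.} Adding the cross term $\ang{\eta(p)-\eta(q),\theta(q)-\theta(r)}$ cancels $\ang{\eta(q),\theta(q)-\theta(r)}$. The two remaining $\eta(p)$-pairings combine into $\ang{\eta(p),\theta(p)-\theta(r)}$. The total is therefore
\begin{equation}
  \ang{\eta(p),\theta(p)-\theta(r)} - \psi(\theta(p)) + \psi(\theta(r)),
\end{equation}
which is exactly $\KL(p\|r)$ by Step 1.

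There is no real obstacle. The only care needed is that all three distributions lie in $\mathcal{P}$, so that $\theta$-coordinates exist and all logarithms are finite. This is guaranteed by the standing strict-positivity assumption and the unique log-linear representation recalled above.
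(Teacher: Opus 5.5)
Your proof is correct. The paper gives no proof of its own and simply cites Amari--Nagaoka, whose argument is the same algebra: write each divergence in canonical (Bregman) form, $\KL(p\|q) = \ang{\eta(p),\theta(p)-\theta(q)} - \psi(\theta(p)) + \psi(\theta(q))$, and cancel terms. The only difference is cosmetic. They express the divergence through the dual potential $\varphi(\eta) = \ang{\theta,\eta} - \psi(\theta)$, whereas you substitute it out and work with $\psi$ alone, which is equivalent.
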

When the final inner-product term vanishes, this identity is often referred to as the extended Pythagorean identity. Geometrically, the vanishing of the inner product means that the $m$-geodesic connecting $p$ and $q$ is orthogonal to the $e$-geodesic connecting $q$ and $r$. 

Given two points $p,r \in \mathcal{P}$, the extended Pythagorean theorem can also be used to decompose $\KL(p\|r)$ in the $k$-cut mixed coordinate system. The following corollary is a reformulation, in the present notation, of a result in \cite{amari-2001-information}. We include the proof for completeness.
\begin{externalcorollary}[Mixed-coordinate Pythagorean decomposition]
  Suppose that two points $p,r \in \mathcal{P}$ have $k$-cut mixed coordinates
  \begin{align}
    \omega_k(p) &= (\eta_{(1)}, \dots, \eta_{(k)}; \theta_{(k+1)}, \dots, \theta_{(N)}),\\
    \omega_k(r) &= (\tilde{\eta}_{(1)}, \dots, \tilde{\eta}_{(k)}; \tilde{\theta}_{(k+1)}, \dots, \tilde{\theta}_{(N)}).
  \end{align}
  Then there exists a point $q \in \mathcal{P}$ satisfying
  \begin{equation}
    \omega_k(q) = (\eta_{(1)}, \dots, \eta_{(k)}; \tilde{\theta}_{(k+1)}, \dots, \tilde{\theta}_{(N)}),
  \end{equation}
  and this point obeys
  \begin{equation}
    \KL(p\|r) = \KL(p\|q) + \KL(q\|r).
    \label{eq:k-cut-pythagoras}
  \end{equation}
\end{externalcorollary}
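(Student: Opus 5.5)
The plan has two parts: first show that the point $q$ exists, then apply the Extended Pythagorean Theorem with $p$, $q$, $r$ and check that the cross term vanishes because of how the mixed coordinates of $q$ are arranged.

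\emph{Existence of $q$.} I would use that the $k$-cut mixed coordinates $\omega_k$ form a global coordinate system on $\mathcal{P}$. Concretely, consider the affine subspace
\[
E=\{\theta:\theta_{(j)}=\tilde\theta_{(j)},\ j>k\},
\]
which is an exponential subfamily with free parameters $\theta_{(1)},\dots,\theta_{(k)}$. Its restricted potential is $\psi$ viewed as a function of the free parameters, and this is strictly convex. On $E$ we seek the point whose expectation parameters satisfy $\eta_{(j)}=\eta_{(j)}(p)$ for $j\le k$. Equivalently, we minimize
\[
\psi(\theta)-\sum_{|I|\le k}\theta^I\eta_I(p)\quad\text{over } E.
\]
This function is strictly convex, so a minimizer is unique if it exists. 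Existence I would argue in one of two ways. The first is standard exponential-family theory: the target vector $(\eta_I(p))_{|I|\le k}$ lies in the interior of the convex support of the sufficient statistics, since it is realized by the strictly positive $p$. The second is more direct: note that this objective equals $\KL(p\|q_\theta)$ plus a constant, because the $\theta_{(j)}$, $j>k$, are fixed on $E$. Coercivity then follows because $\KL(p\|\cdot)$ blows up at the boundary of $\mathcal{P}$ when $p$ is strictly positive. The stationarity condition at the minimizer is exactly $\eta_{(j)}(q)=\eta_{(j)}(p)$ for $j\le k$, and $q$ lies on $E$ by construction. This step is the main obstacle, since it requires an interior or coercivity argument; I would use the KL-coercivity route, which needs only that $p$ is strictly positive.

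\emph{Vanishing of the cross term.} Apply \eqref{eq:extended-pythagoras} with the triple $(p,q,r)$. The inner product
\[
\sum_{I}(\eta_I(p)-\eta_I(q))(\theta^I(q)-\theta^I(r))
\]
splits according to $|I|$. For $|I|\le k$ the first factor vanishes, because $q$ matches the $\eta$-coordinates of $p$ up to order $k$. For $|I|>k$ the second factor vanishes, because $q$ and $r$ share the $\theta$-coordinates of order above $k$. Hence every term is zero, and \eqref{eq:k-cut-pythagoras} follows.
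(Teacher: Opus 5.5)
Your proposal is correct and follows the paper's proof. The paper also builds $q$ in the exponential subfamily with $\theta_{(k+1)},\dots,\theta_{(N)}$ fixed at $r$'s values. It gets existence from $\eta_{(1:k)}(p)$ lying in the interior of the mean-parameter space, citing Wainwright--Jordan for the bijectivity of the gradient map, and then kills the cross term exactly as you do. Your preferred KL-coercivity route is simply a self-contained proof of that cited exponential-family fact. It goes through once you note that, by the M\"obius inversion formula, $\|\theta\|\to\infty$ on $E$ forces $\min_A q_\theta(A)\to 0$.
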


\begin{proof}
  The existence of such a point $q \in \mathcal{P}$ is shown in Appendix~\ref{sec:proof-k-cut-pythagoras}. Assuming its existence, we apply the extended Pythagorean theorem to the three points $p,q,r$. The components of $\eta(p) - \eta(q)$ up to order $k$ are zero, while the components of $\theta(q) - \theta(r)$ of order $k+1$ and higher are zero. Hence the inner-product term in \eqref{eq:extended-pythagoras} vanishes.
\end{proof}

In the setting of the mixed-coordinate Pythagorean decomposition, let $\mathcal{E} \subset \mathcal{P}$ be the submanifold on which the coordinates $\theta_{(k+1)}, \dots, \theta_{(N)}$ are fixed at $\tilde{\theta}_{(k+1)}, \dots, \tilde{\theta}_{(N)}$. Then
\begin{equation}
  \KL(p\|q) = \min_{r \in \mathcal{E}} \KL(p\|r).
\end{equation}
Indeed, \eqref{eq:k-cut-pythagoras} holds for any $r \in \mathcal{E}$, and $\KL(q\|r) \geq 0$. For this reason, $q$ is called the $m$-projection, or simply the projection, of $p$ onto $\mathcal{E}$ \cite{amari-2001-information}.

We next express DPPs in these coordinates. We begin with the diagonal scaling of an $L$-ensemble kernel:
\begin{equation}
  L = \sqrt{Q} D \sqrt{Q}.
  \label{eq:quality-vs-diversity}
\end{equation}
Here $Q=\Diag(L_{11}, \dots, L_{NN})$ and $D$ is the symmetric matrix with unit diagonal and off-diagonal entries
\begin{equation}
  D_{ij}=\frac{L_{ij}}{\sqrt{L_{ii} L_{jj}}}.
\end{equation}
This is also known as the quality--diversity decomposition \cite{kulesza-2012-determinantal}. Since
\begin{equation}
  p_L(A) \propto \det L_A = (\det Q_A)(\det D_A),
\end{equation}
$\det Q_A$ increases as the item-wise qualities $Q_{ii}$ of elements in $A$ increase, whereas $\det D_A$ tends to decrease as the similarities $|D_{ij}|$ among elements in $A$ increase. We therefore call $Q$ the quality matrix and $D$ the diversity matrix. The numbers of parameters in $Q$ and $D$ are $N$ and $N(N-1)/2$, respectively. Because we work with $L$-ensemble kernels, both matrices are positive definite. Conversely, any positive diagonal matrix $Q$ and any positive definite symmetric matrix $D$ with unit diagonal determine an $L$-ensemble kernel through \eqref{eq:quality-vs-diversity}.

Using this quality--diversity decomposition, Hino and Yano \cite{hino-2024-embedding} showed that the $\theta$-coordinates of the DPP distribution $p_L$ can be written as
\begin{equation}
  \theta^{\{i\}}(p_L) = \log Q_{ii}, \quad \theta^I(p_L) = \log\det D_I - \sum_{J \subsetneq I;\ |J| \geq 2} \theta^J(p_L) \quad (|I| \geq 2).
  \label{eq:dpp-theta-coordinate}
\end{equation}
Thus, the first-order coordinates are determined only by the quality matrix $Q$, whereas the coordinates of order two or higher are determined recursively only by the diversity matrix $D$. To make these dependencies explicit, we write
\begin{equation}
    \theta_{(1)}(Q) \coloneqq \theta_{(1)}(p_L), \quad \theta_{(2+)}(D) \coloneqq \theta_{(2+)}(p_L),
\end{equation}
where $\theta_{(2+)} = (\theta_{(2)}, \theta_{(3)}, \dots, \theta_{(N)})$. The $\eta$-coordinates, being expectation parameters, are expressed in terms of the marginal kernel $K$ as
\begin{equation}
  \eta_I = \det K_I.
  \label{eq:dpp-eta-coordinate}
\end{equation}

\subsection{Positive and negative dependence}
\label{sec:positive-negative-dependence}

Attractive and repulsive properties of probability distributions on the power set have long been studied under the names of positive and negative dependence, and several mathematical formulations have been proposed. We first recall notions of positive dependence. Let $Y$ be a random subset with distribution $p$, where $\sum_{S \subset [N]} p(S)=1$. The distribution $p$ is said to be positively associated (PA) if, for every pair of increasing functions $f,g:2^{[N]} \to \R$,
\begin{equation}
  \E[f(Y)g(Y)] \geq \E[f(Y)]\E[g(Y)] \iff \Cov(f(Y),g(Y)) \geq 0,
  \label{eq:PA}
\end{equation}
holds \cite{esary-1967-association}. Here a function $f:2^{[N]} \to \R$ is increasing if $S \subset T$ implies $f(S) \leq f(T)$.

A less restrictive property is that of being weakly positively associated (wPA; \cite{burton-1986-invariance}). A distribution $p$ is wPA if, for any disjoint subsets $I,J \subset [N]$ and any pair of increasing functions $f:2^I \to \R$ and $g:2^J \to \R$,
\begin{equation}
  \Cov(f(Y \cap I), g(Y \cap J)) \geq 0.
  \label{eq:wPA}
\end{equation}
Clearly, $\text{(PA)} \Rightarrow \text{(wPA)}$. A sufficient condition for these properties is the positive lattice condition (PLC; \cite{fortuin-1971-correlation}), which requires that
\begin{equation}
  p(S)p(T) \leq p(S \cup T) p(S \cap T) \quad (S,T \subset [N]).
  \label{eq:PLC}
\end{equation}
Taking logarithms gives
\begin{equation}
  \log p(S) + \log p(T) \leq \log p(S \cup T) + \log p(S \cap T),
  \label{eq:log-supermodular}
\end{equation}
which means that $\log p$ is supermodular. For this reason, PLC is also called log-supermodularity. Compared with PA and wPA, which quantify over pairs of increasing functions, PLC is a local and more easily verifiable condition because it is stated directly in terms of the four sets $S$, $T$, $S \cup T$, and $S \cap T$.

The weakest positive-dependence condition considered here is pairwise positive correlation (p-PC):
\begin{equation}
  \Pr(i \in Y) \Pr(j \in Y) \leq \Pr(i,j \in Y) \quad (i \neq j).
  \label{eq:p-PC}
\end{equation}
This is obtained from the wPA condition \eqref{eq:wPA} by taking $I=\{i\}$, $J=\{j\}$, $f(S)=1\{i \in S\}$, and $g(S)=1\{j \in S\}$. The implications among these positive-dependence notions are therefore summarized as
\begin{equation}
  \text{(PLC)} \Rightarrow \text{(PA)} \Rightarrow \text{(wPA)} \Rightarrow \text{(p-PC)}.
\end{equation}

Repulsive properties are defined analogously. A distribution $p$ is said to be negatively associated (NA; \cite{joagdev-1983-negative}) if the inequality in \eqref{eq:wPA} is reversed. There is no direct negative counterpart of PA, because $\Cov(f(Y), f(Y)) = \Var(f(Y)) \geq 0$ holds for every function $f:2^{[N]} \to \R$. Similarly, the negative lattice condition (NLC) is obtained by reversing the inequality in \eqref{eq:PLC}; equivalently, $\log p$ is submodular. While the FKG theorem implies $\text{(PLC)} \Rightarrow \text{(PA)}$ for positive dependence \cite{fortuin-1971-correlation}, no analogous implication holds for negative dependence \cite{pemantle-2004-theory}. Finally, pairwise negative correlation (p-NC) is defined by reversing the inequality in \eqref{eq:p-PC}, and $\text{(NA)} \Rightarrow \text{(p-NC)}$ holds.

Among these conditions, PLC and NLC are often used in modeling, as they provide tractable criteria for attractive and repulsive behavior on the power set \cite{iyer-2015-submodular,kawashima-2025-family}. DPPs are especially relevant in this context: they are known to be strongly Rayleigh \cite{borcea-2009-negative}, a property that implies both NA and NLC. Consequently, DPPs satisfy all of the repulsive-dependence properties discussed above.

\section{Reduction of DPP Approximation to the Diversity Component}
\label{sec:reduction}
We begin by separating the roles of the quality and diversity components in the KL approximation problem. The main result reduces the optimization over $L$-ensemble kernels to an optimization over diversity matrices.

\begin{theorem}
  \label{thm:DPP-error-dependence}
  Let $p^*$ be a strictly positive probability distribution on $2^{[N]}$. The approximation error of $p^*$ by DPPs depends only on the diversity component $D$ of the $L$-ensemble kernel: For each diversity matrix $D$, let $\nu_D \in \mathcal{P}$ be the unique distribution whose $1$-cut mixed coordinate satisfies
  \begin{equation}
    \omega_1(\nu_D) = (\eta^*_{(1)};\theta_{(2+)}(D)), \quad \eta^*_{(1)} \coloneqq \eta_{(1)}(p^*).
  \end{equation}
  Then
  \begin{equation}
    \KL(p^*\|\DPP) = \inf_{D \succ O;\ \diag D = \bm{1}_N} \KL(p^*\|\nu_D).
  \end{equation}
\end{theorem}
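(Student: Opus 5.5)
The plan is to apply the mixed-coordinate Pythagorean decomposition with $k=1$ to each $L$-ensemble $p_L$. We then show that, for fixed $D$, the minimum over $Q$ is attained exactly at $\nu_D$, and that $\nu_D$ is itself an $L$-ensemble.

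First, fix an $L$-ensemble $p_L$ with $L=\sqrt{Q}D\sqrt{Q}$, where $Q$ is positive diagonal and $D\succ O$ has unit diagonal. Since $p_L$ is strictly positive, it lies in $\mathcal{P}$. By \eqref{eq:dpp-theta-coordinate}, its $1$-cut mixed coordinate is $\omega_1(p_L)=(\eta_{(1)}(p_L);\theta_{(2+)}(D))$. The mixed-coordinate Pythagorean decomposition, applied with $p=p^*$ and $r=p_L$, yields a point $q$ with $\omega_1(q)=(\eta^*_{(1)};\theta_{(2+)}(D))$. This point is $\nu_D$; its uniqueness follows because $\omega_1$ is a coordinate system on $\mathcal{P}$, the standard fact underlying the corollary. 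The decomposition then gives
\begin{equation}
  \KL(p^*\|p_L)=\KL(p^*\|\nu_D)+\KL(\nu_D\|p_L)\ \geq\ \KL(p^*\|\nu_D).
\end{equation}
Taking the infimum over all $L$, and using that every $L$ has such a decomposition, we obtain $\KL(p^*\|\DPP)\geq \inf_D \KL(p^*\|\nu_D)$.

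For the reverse inequality, we show that $\nu_D$ is itself an $L$-ensemble. The candidate is $p_{L'}$ with $L'=\sqrt{Q'}D\sqrt{Q'}$ and $Q'=\Diag(\exp\theta^{\{i\}}(\nu_D))$. By \eqref{eq:dpp-theta-coordinate}, the full $\theta$-coordinates of $p_{L'}$ are $(\theta_{(1)}(\nu_D),\theta_{(2+)}(D))$. Because the $\theta$-coordinates of $\nu_D$ are exactly $(\theta_{(1)}(\nu_D),\theta_{(2+)}(D))$, and $\theta$ is a global coordinate system on $\mathcal{P}$, we conclude $p_{L'}=\nu_D$. Hence $\KL(p^*\|\DPP)\leq \KL(p^*\|\nu_D)$ for every admissible $D$, and the two inequalities give the claimed equality. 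As a byproduct, $Q'$ is the unique quality matrix, for the given $D$, whose DPP matches all first-order inclusion probabilities of $p^*$.

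The main obstacle is the existence and uniqueness of $\nu_D$, that is, that $\omega_1$ is a bijective coordinate system on $\mathcal{P}$. This is deferred to the appendix via the existence part of the corollary. If a self-contained argument were needed, I would fix $\theta_{(2+)}=\theta_{(2+)}(D)$ and consider the strictly convex function $\theta_{(1)}\mapsto \psi(\theta_{(1)},\theta_{(2+)})-\ang{\eta^*_{(1)},\theta_{(1)}}$. Its gradient map $\theta_{(1)}\mapsto\eta_{(1)}$ is injective. It is also onto the open box $(0,1)^N$, because this fiber is a full exponential family of product-type tilts whose mean parameters range over the interior of the convex hull of $\{0,1\}^N$. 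Since $p^*$ is strictly positive, $\eta^*_{(1)}$ lies in $(0,1)^N$, so a minimizer exists and is unique.
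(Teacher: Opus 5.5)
Your proposal is correct and follows essentially the same route as the paper. The $1$-cut Pythagorean decomposition gives $\KL(p^*\|p_L)\geq \KL(p^*\|\nu_D)$, and the reverse inequality comes from showing that $\nu_D$ is itself the $L$-ensemble with diversity $D$. The only difference is in how you justify that last step. You read off $Q'=\Diag(e^{\theta^{\{1\}}(\nu_D)},\dots,e^{\theta^{\{N\}}(\nu_D)})$ from the already-constructed $\nu_D$ and invoke injectivity of the global $\theta$-coordinates. The paper instead re-applies the exponential-family gradient-map bijection directly to the family $\{p_L\}_Q$ with $D$ fixed, which is the same argument its appendix uses to construct $\nu_D$, so both justifications rest on the same fact.
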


\begin{proof}
  For the target distribution $p^*$ and an arbitrary DPP distribution $p_L$, the mixed-coordinate Pythagorean decomposition implies that there exists a distribution $\nu_D \in \mathcal{P}$ satisfying
  \begin{equation}
    \omega_1(\nu_D) = (\eta^*_{(1)};\theta_{(2+)}(p_L)) = (\eta^*_{(1)};\theta_{(2+)}(D)),
  \end{equation}
  and
  \begin{equation}
    \KL(p^*\|p_L) = \KL(p^*\|\nu_D) + \KL(\nu_D\|p_L).
    \label{eq:pythagoras-DPP}
  \end{equation}
  By \eqref{eq:dpp-theta-coordinate}, among the $1$-cut mixed coordinates of the three distributions $p^*$, $\nu_D$, and $p_L$, the only component depending on the quality matrix $Q$ is
  \begin{equation}
    \omega_1(p_L) = (\eta_{(1)}(Q,D); \theta_{(2+)}(D)).
  \end{equation}
  Hence the approximation error can be decomposed as
  \begin{align}
    \KL(p^*\|\DPP)
    &= \inf_L \{\KL(p^*\|\nu_D) + \KL(\nu_D\|p_L)\} \\
    &= \inf_D \left\{\KL(p^*\|\nu_D) + \inf_Q \KL(\nu_D\|p_L)\right\}.
    \label{eq:error-decomposition}
  \end{align}
  Therefore, it remains to show that, for each fixed $D$, there exists a quality matrix $Q$ such that $\eta_{(1)}(Q,D) = \eta^*_{(1)}$. If such a $Q$ exists, then $p_L = \nu_D$ and the second term in \eqref{eq:error-decomposition} attains its minimum value zero.

  We now prove the existence of such a $Q$. Fix $D$. As $Q$ varies, the distributions $p_L$ form a minimal exponential family with sufficient-statistic vector $(1\{i \in Y\})_{i \in [N]}$ and natural-parameter vector $\theta_{(1)}(Q) = (\theta^{\{i\}}(Q))_{i \in [N]}$:
  \begin{align}
    p_L(A) &= \exp\left[\log\det D_A + \sum_{i \in [N]} \theta^{\{i\}}(Q)1\{i \in A\} - \psi_D(\theta_{(1)}(Q))\right], \label{eq:exp-family-theta1} \\
    \psi_D(\theta_{(1)}(Q)) &= \log \sum_{J \subset [N]} \exp\left(\log\det D_J + \sum_{j \in J} \theta^{\{j\}}(Q)\right).
  \end{align}
  Because $D \succ O$, every principal minor $\det D_A$ is positive, so the family has full support on $2^{[N]}$. By \eqref{eq:dpp-theta-coordinate}, varying $Q$ allows $\theta_{(1)}$ to range over all of $\R^N$. Therefore, the exponential family \eqref{eq:exp-family-theta1} is regular. Its mean-parameter space is
  \begin{equation}
    \mathrm{conv} \{{(1\{i \in A\})_{i \in [N]} \mid A \subset [N]\}} = [0,1]^N.
  \end{equation}
  By the standard theory of exponential families, specifically \cite[Proposition~3.2 and Theorem~3.3]{wainwright-2008-graphical}, the gradient map
  \begin{equation}
    \nabla_{\theta_{(1)}}\psi_D: \R^N \to (0,1)^N, \quad \theta_{(1)} \mapsto \nabla_{\theta_{(1)}} \psi_D(\theta_{(1)}) = \eta_{(1)}
  \end{equation}
  is bijective. Since $p^*$ is strictly positive, $\eta^*_{(1)} \in (0,1)^N$, and hence there exists a unique $\theta_{(1)}$ corresponding to $\eta^*_{(1)}$. Setting
  \begin{equation}
    Q = \Diag(e^{\theta^{\{1\}}}, \dots, e^{\theta^{\{N\}}})
  \end{equation}
  gives the desired quality matrix $Q$ satisfying $\eta_{(1)}(Q,D) = \eta^*_{(1)}$, and the theorem follows.
\end{proof}

As visualized in Figure~\ref{fig:KL-decomposition}, Theorem~\ref{thm:DPP-error-dependence} reduces the original optimization over $(Q,D)$ to an optimization over $D$ alone. The remaining question is how well $D$ can represent the higher-order interaction structure of $p^*$, and whether this representation can reduce the approximation error.

\begin{figure}[!t]
  \centering
  \resizebox{\linewidth}{!}{
  \begin{tikzpicture}[x=0.55cm, y=0.55cm, >=Latex, line cap=round, line join=round]
    \coordinate (A) at (-10,-3.0);
    \coordinate (B) at (12,-2.0);
    \coordinate (C) at (13.5,2.0);
    \coordinate (D) at (-7.5,1.5);
    \coordinate (pstar) at (-5.0,0.0);
    \coordinate (q)     at ( 5.0,1.0);
    \coordinate (p)     at ( 6.0,8.0);
    \coordinate (pafter)  at ($(p)+(0.25,1)$);
    \coordinate (qbefore) at ($(q)+(0.2,-2)$);

    \path[fill=surface, draw=black, line width=0.8pt]
      (A)
      .. controls ($(A)!0.33!(B)+(0,0.8)$) and ($(A)!0.66!(B)+(0,0.9)$) .. (B)
      .. controls ($(B)!0.33!(C)+(-0.4,-0.2)$) and ($(B)!0.66!(C)+(-0.4,-0.2)$) .. (C)
      .. controls ($(C)!0.33!(D)+(0,0.6)$) and ($(C)!0.66!(D)+(0,0.8)$) .. (D)
      .. controls ($(D)!0.33!(A)+(-0.4,-0.2)$) and ($(D)!0.66!(A)+(-0.4,-0.2)$) .. (A)
      -- cycle;

    \draw[use Hobby shortcut, line width=1pt]
      (qbefore) .. (q) .. (p) .. (pafter);

    \draw[dashed, line width=1pt]
      (pstar) .. controls (-3,3) and (-3,5) .. (p);
    \draw[dashed, line width=1pt]
      (pstar) .. controls (-1,1) and (0,1) .. (q);
    \draw[black, line width=0.5pt]
      ($(q)+(-0.55,0)$) -- ($(q)+(-0.55,0.55)$) -- ($(q)+(0,0.55)$);

    \fill (pstar) circle[radius=0.15];
    \fill (q)     circle[radius=0.15];
    \fill (p)     circle[radius=0.15];

    \node[anchor=north east] at ($(pstar)+(-0.10,-0.25)$) {$p^*$};
    \node[anchor=west] at ($(p)+(0.18,0)$) {$p_L$};
    \node[anchor=north west] at ($(q)+(0.15,-0.20)$)
      {$\omega_1(\nu_D) = (\eta^*_{(1)};\theta_{(2+)}(D))$};

    \node at (-1.5,6.0) {$\mathrm{KL}(p^*\|p_L)$};
    \node at (0,-0.3) {$\mathrm{KL}(p^*\|\nu_D)$};
    \node[anchor=west] at (5.5,4.5) {$\mathrm{KL}(\nu_D\|p_L)$};

    \node[draw=black, inner sep=5pt, anchor=center] at ($(pafter)+(0,0.8)$)
      {$\mathcal{E}_D = \{\theta_{(1)}:\text{free}, \quad \theta_{(2+)} \equiv \theta_{(2+)}(D)\}$};
    \node[draw=black, inner sep=5pt, anchor=center, align=left] at ($(pstar)+(-3,2.8)$)
      {$\{\eta_{(1)} \equiv \eta^*_{(1)}, \quad \eta_{(2+)}:\text{free}\}$};

    \draw[-{Latex[length=2.2mm]}]
      ($(pstar)+(-2,1.85)$) -- ($(pstar)+(-1.5,0.8)$);
  \end{tikzpicture}
  }

  \caption{
    Geometric interpretation of Theorem~\ref{thm:DPP-error-dependence}. For each fixed diversity matrix $D$, varying the quality matrix $Q$ moves the DPP distribution $p_L$ within the $e$-flat family $\mathcal{E}_D =\{p\in\mathcal{P} \mid \theta_{(2+)}(p) \equiv \theta_{(2+)}(D)\}$. Its unique intersection $\nu_D$ with the $m$-flat family $\{p\in\mathcal{P} \mid \eta_{(1)}(p) \equiv \eta^*_{(1)}\}$ is the DPP whose first-order marginals match those of $p^*$. The Pythagorean decomposition $\KL(p^*\|p_L) = \KL(p^*\|\nu_D) + \KL(\nu_D\|p_L)$ shows that optimizing over $Q$ eliminates the second term, leaving an optimization over $D$ alone
  }
  \label{fig:KL-decomposition}
\end{figure}

\FloatBarrier

\section{Attractive Dependence: When the Optimal DPP Is Independent}
\label{sec:attractive}
Theorem~\ref{thm:DPP-error-dependence} immediately gives a trivial upper bound by taking $D = I$:
\begin{equation}
  \KL(p^*\|\DPP) \leq \KL(p^*\|\nu_I).
\end{equation}
Here $\nu_I$ is the product of the one-dimensional marginals of $p^*$,
\begin{equation}
  \nu_I = p^*_{\{1\}} \otimes p^*_{\{2\}} \otimes \cdots \otimes p^*_{\{N\}}.
\end{equation}
Since each one-dimensional marginal is Bernoulli, $\nu_I$ is the independent product distribution with the same first-order marginals as $p^*$. Consequently, $\KL(p^*\|\nu_I)$ is the total correlation of the Bernoulli random variables $X_i \coloneqq 1\{i \in Y\}$, $i=1, \dots, N$.

In the $1$-cut mixed coordinate system, this independence, or equivalently the absence of interactions among the elements, is expressed by the vanishing of all interaction coordinates of order two or higher:
\begin{equation}
  \omega_1(\nu_I) = (\eta^*_{(1)}; \theta_{(2+)}(I)) = (\eta^*_{(1)}; 0),
\end{equation}
where the last equality follows from \eqref{eq:dpp-theta-coordinate}.

If DPP modeling is appropriate for approximating $p^*$, one would expect that using a nontrivial diversity component $D \neq I$ improves on this independent approximation, namely that $\KL(p^*\|\nu_D) < \KL(p^*\|\nu_I)$ for some $D$.

\subsection{Global Optimality of the Independent Approximation}
\label{sec:global-optimality}
We first show that if the true distribution $p^*$ has sufficiently strong attractive dependence, then no nontrivial diversity matrix $D \neq I$ can improve the approximation error over $\nu_I$. To state the result precisely, we recall stochastic orders based on supermodular functions \cite{muller-2002-comparison}. A function $h:2^{[N]} \to \R$ is supermodular if, for all
$S,T \subset [N]$,
\begin{equation}
  h(S \cap T) + h(S \cup T) \geq h(S) + h(T).
  \label{eq:supermodular}
\end{equation}
For $p,q \in \mathcal{P}$, we write $p \succsm q$ if
\begin{equation}
  \E_p[h(Y)] \geq \E_q[h(Y)]
\end{equation}
holds for every supermodular function $h:2^{[N]} \to \R$. We also write $p \succism q$ when the same inequality holds for every increasing supermodular function.

\begin{remark}
  \label{rem:ism-sm-equivalence}
  In general, $p \succsm q$ implies $p \succism q$, whereas the converse need not hold. The converse does hold, however, when $\E_p[|Y|] = \E_q[|Y|]$. This condition is satisfied, in particular, by $p = p^*$ and $q = \nu_I$, since $\nu_I$ has the same first-order marginals as $p^*$.

  We now prove the general statement. Let $h$ be any supermodular function, and choose $C \geq 0$ such that
  \begin{equation}
    C \geq \max_{S \subsetneq T \subset [N]} \frac{h(S)-h(T)}{|T|-|S|},
  \end{equation}
  and define $h'(S) \coloneqq h(S) + C|S|$. For every $S \subset T \subset [N]$,
  \begin{equation}
    h'(T) - h'(S) = h(T) - h(S) + C(|T| - |S|) \geq 0,
  \end{equation}
  and hence $h'$ is increasing. Moreover, since $S \mapsto |S|$ is supermodular, $h'$ is also supermodular. Therefore, if $p \succism q$, then
  \begin{equation}
    \E_p [h'(Y)] - \E_q [h'(Y)] \geq 0,
  \end{equation}
  and the assumption $\E_p[|Y|] = \E_q[|Y|]$ implies that
  \begin{equation}
    \E_p [h(Y)] - \E_q [h(Y)] \geq 0,
  \end{equation}
  as well. Thus, $p \succsm q$.
\end{remark}

The following lemma identifies an increasing supermodular function that is naturally induced by the diversity component of a DPP.
\begin{lemma}
  \label{lem:logdet-sm}
  Let $D$ be the diversity component of a DPP $L$-ensemble kernel, with $\diag D = \bm{1}_N$ and $D \succ O$. Then the function $S \mapsto -\log\det D_S$ on $2^{[N]}$ is increasing and supermodular.
\end{lemma}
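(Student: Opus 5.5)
The proof reduces both claims to properties of Schur complements, via the Schur determinant formula for conditional variances.

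\textbf{Monotonicity.} It suffices to show that adding one element $i \notin S$ does not decrease $-\log\det D_S$. By the Schur complement formula,
\begin{equation}
  \det D_{S\cup\{i\}} = \det D_S \cdot \bigl(D_{ii} - D_{iS} D_S^{-1} D_{Si}\bigr).
\end{equation}
Since $D \succ O$, the Schur complement $s_i(S) \coloneqq 1 - D_{iS}D_S^{-1}D_{Si}$ is positive. Because $D_S^{-1} \succ O$, it also satisfies $s_i(S) \le D_{ii} = 1$. Therefore
\begin{equation}
  -\log\det D_{S\cup\{i\}} + \log\det D_S = -\log s_i(S) \ge 0,
\end{equation}
which gives monotonicity. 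Here the unit diagonal is essential; for $S=\emptyset$ it gives $-\log\det D_{\{i\}} = 0$.

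\textbf{Supermodularity.} Supermodularity on $2^{[N]}$ is equivalent to increasing marginal gains: for $S \subset T$ and $i \notin T$,
\begin{equation}
  -\log s_i(T) \ge -\log s_i(S).
\end{equation}
This is equivalent to $s_i(T) \le s_i(S)$. The standard route to the equivalence is the usual lattice argument, adding the elements of $T\setminus S$ one at a time.

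To prove $s_i(T) \le s_i(S)$, interpret $D$ as the covariance matrix of a Gaussian vector $Z$. Then $s_i(S) = \Var(Z_i \mid Z_S)$, the residual variance of the best linear predictor of $Z_i$ from $Z_S$. Conditioning on more variables cannot increase the residual variance, since the $T$-predictor's feasible set contains the $S$-predictor's. Equivalently, $s_i(S) = \min_{a}\, (e_i - a)^\top D (e_i - a)$ with $a$ supported on $S$. The minimum over the larger support $T$ is at most the minimum over $S$, which yields the inequality directly without probabilistic language.

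\textbf{Main obstacle.} The only delicate point is the reduction from the four-set inequality to single-element marginal gains. I would write this out briefly: set $A = S\cap T$ and add the elements of $S\setminus T$ one by one to both $A$ and $T$, then telescope the resulting gain inequalities. This is routine, but it must be stated since the definition in \eqref{eq:supermodular} is the four-set form. Everything else follows from positive definiteness of $D$, which is preserved in all principal submatrices.
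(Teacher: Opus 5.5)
Your proof is correct, but it takes a different route from the paper's. The paper works with whole sets. It cites Koteljanskii's (Hadamard--Fischer) inequality $\det D_{S\cup T}\det D_{S\cap T}\le \det D_S\det D_T$ from Horn--Johnson, which becomes the four-set supermodularity inequality after taking $-\log$. It gets monotonicity from the same inequality applied to the disjoint pair $S$, $T\setminus S$, then strips off unit diagonal entries one at a time to show $\det D_{T\setminus S}\le 1$.

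You instead work with single-element increments. Everything reduces to one fact: the Schur complement $s_i(S)=\min (e_i-a)^\top D(e_i-a)$, with the minimum over $a$ supported on $S$, is nonincreasing in $S$. You then recover the four-set form by the telescoping lattice argument, and your sketch of that step is right: add the elements of $S\setminus T$ one at a time to both $S\cap T$ and $T$. In effect you reprove the special case of Koteljanskii's inequality needed here; one standard proof of that inequality for positive definite matrices is exactly this Schur-complement comparison.

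The paper's route is shorter because it hands the key inequality to Horn--Johnson. Yours is self-contained and shows that both properties are a single fact. Monotonicity is just the case $s_i(S)\le s_i(\emptyset)=D_{ii}=1$, which pins down exactly where the unit diagonal is used. Supermodularity is the Gaussian form of ``conditioning reduces entropy'', since $\tfrac12\log\det D_S$ is, up to a modular term, the differential entropy of $Z_S$.
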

\begin{proof}
  For any $S,T \subset [N]$, Koteljanskii's inequality, also known as the Hadamard--Fischer inequality \cite[Theorem~7.8.9]{horn-2012-matrix}, gives
  \begin{equation}
    \det D_{S \cup T}\det D_{S \cap T} \leq \det D_S\det D_T.
  \end{equation}
  Taking logarithms and reversing the sign yields
  \begin{equation}
    -\log\det D_{S \cup T} - \log\det D_{S \cap T} \geq -\log\det D_S - \log\det D_T,
  \end{equation}
  which is supermodularity.

  It remains to show monotonicity. Let $S \subset T$. Applying Koteljanskii's inequality to $S$ and $T \setminus S$ gives
  \begin{equation}
    \det D_T \det D_\emptyset \leq \det D_S \det D_{T\setminus S}.
  \end{equation}
  Moreover, since $\diag D=\bm{1}_N$, for any $i \in T \setminus S$ we have
  \begin{equation}
    \det D_{T \setminus S} \det D_{\emptyset} \leq \det D_{\{i\}} \det D_{(T \setminus S) \setminus \{i\}} = \det D_{(T \setminus S) \setminus \{i\}}.
  \end{equation}
  Therefore, by induction, $\det D_{T \setminus S} \leq \det D_\emptyset = 1$. This proves $\det D_T \leq \det D_S$, and hence
  \begin{equation}
    -\log\det D_T \geq -\log\det D_S.
  \end{equation}
  Thus $S \mapsto -\log\det D_S$ is increasing.
\end{proof}

We can now state the main global non-improvement result for attractive distributions.
\begin{theorem}
  \label{thm:nonvalidity-ism}
  If $p^* \succism \nu_I$, then the diversity component $D = I$ is optimal for approximating $p^*$ by DPPs:
  \begin{equation}
    \KL(p^*\|\DPP) = \KL(p^*\|\nu_I).
  \end{equation}
  The corresponding kernels are
  \begin{align}
    K &= \Diag(\eta^*_{\{1\}},\dots,\eta^*_{\{N\}}), \label{eq:nonvalid-opt-K}\\
    L &= \Diag\left(\frac{\eta^*_{\{1\}}}{1-\eta^*_{\{1\}}},\dots,\frac{\eta^*_{\{N\}}}{1-\eta^*_{\{N\}}}\right). \label{eq:nonvalid-opt-L}
  \end{align}
\end{theorem}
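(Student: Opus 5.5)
By Theorem~\ref{thm:DPP-error-dependence} it suffices to show $\KL(p^*\|\nu_D) \ge \KL(p^*\|\nu_I)$ for every admissible diversity matrix $D$. The plan is to apply the mixed-coordinate Pythagorean decomposition once more, now comparing $\nu_D$ with $\nu_I$. Both have the same first-order $\eta$-coordinates $\eta^*_{(1)}$, and $\nu_I$ has $\theta_{(2+)}=0$. Apply the extended Pythagorean theorem to $p^*,\nu_I,\nu_D$:
\begin{equation}
  \KL(p^*\|\nu_D) = \KL(p^*\|\nu_I) + \KL(\nu_I\|\nu_D) + \ang*{\eta(p^*)-\eta(\nu_I), \theta(\nu_I)-\theta(\nu_D)}.
\end{equation}
The first-order part of $\eta(p^*)-\eta(\nu_I)$ vanishes, and $\theta_{(2+)}(\nu_I)=0$. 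Hence the cross term equals
\begin{equation}
  -\sum_{|I|\ge 2}\bigl(\eta_I(p^*)-\eta_I(\nu_I)\bigr)\theta^I(D).
\end{equation}
Since $\KL(\nu_I\|\nu_D)\ge 0$, it remains to show this cross term is nonnegative.

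To evaluate the cross term, I would rewrite it as a difference of expectations. By \eqref{eq:dpp-theta-coordinate} and Möbius inversion,
\begin{equation}
  \sum_{|I|\ge2}\theta^I(D)1\{I\subset S\}=\log\det D_S,
\end{equation}
using $\det D_{\{i\}}=1$ and $\det D_\emptyset=1$. Hence
\begin{equation}
  \sum_{|I|\ge2}\eta_I(p)\theta^I(D)=\E_p[\log\det D_Y].
\end{equation}
The cross term then becomes
\begin{equation}
  \E_{p^*}[-\log\det D_Y]-\E_{\nu_I}[-\log\det D_Y].
\end{equation}
By Lemma~\ref{lem:logdet-sm}, the function $S\mapsto-\log\det D_S$ is increasing and supermodular. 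The hypothesis $p^*\succism\nu_I$ therefore makes the cross term nonnegative. This gives $\KL(p^*\|\nu_D)\ge\KL(p^*\|\nu_I)$ for all $D$, and hence $\KL(p^*\|\DPP)=\KL(p^*\|\nu_I)$, the infimum being attained at $D=I$.

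For the kernels, $\nu_I$ is the product of Bernoulli$(\eta^*_{\{i\}})$ distributions. It is therefore the DPP with diagonal $K=\Diag(\eta^*_{\{i\}})$, since then $\det K_A=\prod_{i\in A}\eta^*_{\{i\}}$. Applying $L=K(I-K)^{-1}$ from \eqref{eq:L-K-relation} yields \eqref{eq:nonvalid-opt-L}. One can also check this directly: $Q_{ii}=e^{\theta^{\{i\}}}$ is the log-odds exponent $\eta^*_{\{i\}}/(1-\eta^*_{\{i\}})$ for a product of Bernoullis.

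The main obstacle is correctly identifying the cross term with the expectation of $\log\det D_Y$. This requires checking that the recursive definition \eqref{eq:dpp-theta-coordinate} is exactly the Möbius inversion of $\log\det D_S$ restricted to $|I|\ge2$. It also requires that the lower-order terms vanish: the singletons contribute nothing because $\diag D=\bm 1$, and the empty set contributes nothing because it carries no $\eta$ coordinate, with $\log\det D_\emptyset=0$. One must also get the signs right so that the increasing supermodular function is $-\log\det D$ rather than its negative.
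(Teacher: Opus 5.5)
Your proposal is correct and follows essentially the same route as the paper: the extended Pythagorean theorem applied to $p^*,\nu_I,\nu_D$, then rewriting the cross term via the recursive $\theta$-coordinates as $\E_{p^*}[-\log\det D_Y]-\E_{\nu_I}[-\log\det D_Y]$, and finally Lemma~\ref{lem:logdet-sm} with $p^*\succism\nu_I$ to show it is nonnegative, followed by the same kernel identification. The paper's proof additionally notes that equality forces $\KL(\nu_I\|\nu_D)=0$ and hence $D=I$; this uniqueness remark is not required by the statement.
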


Before proving the theorem, we record a more intuitive sufficient condition in terms of the positive-dependence notions introduced in Section~\ref{sec:positive-negative-dependence}.
\begin{corollary}
  \label{cor:nonvalidity-p-dependence}
  If $p^*$ is weakly positively associated, then $D = I$ is optimal for DPP approximation of $p^*$. Consequently, the same conclusion holds if $p^*$ is positively associated or satisfies the positive lattice condition, equivalently log-supermodularity. The optimal kernels are given by \eqref{eq:nonvalid-opt-K} and \eqref{eq:nonvalid-opt-L}.
\end{corollary}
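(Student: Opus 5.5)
The plan is to reduce the corollary to Theorem~\ref{thm:nonvalidity-ism} by proving that a weakly positively associated $p^*$ satisfies $p^* \succism \nu_I$. The remaining claims then follow at once. The chain $\text{(PLC)} \Rightarrow \text{(PA)} \Rightarrow \text{(wPA)}$ recalled in Section~\ref{sec:positive-negative-dependence} covers the PA and PLC cases. The optimal kernels are exactly those given in Theorem~\ref{thm:nonvalidity-ism}. So the only real content is the implication $\text{(wPA)} \Rightarrow p^* \succism \nu_I$.

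To prove it, I would use the classical fact that weak association dominates the independent coupling with the same one-dimensional marginals in the (increasing) supermodular order. This is a result in the spirit of Christofides--Vaggelatou and Müller--Stoyan. Because the statement is for finite Bernoulli vectors, I would prove it directly by induction.

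\emph{Interpolating distributions.} Write $X_i = 1\{i\in Y\}$. For $k=1,\dots,N$ let $r_k$ be the law of the vector in which $(X_1,\dots,X_k)$ has its joint law under $p^*$ and $X_{k+1},\dots,X_N$ are independent of it and of each other, each with its marginal under $p^*$. Then $r_N = p^*$ and $r_1=\nu_I$, so it suffices to show $\E_{r_k}[h] \ge \E_{r_{k-1}}[h]$ for each increasing supermodular $h$.

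\emph{One step of the telescoping.} Fix the independent coordinates $x_{k+1},\dots,x_N$ and integrate them out at the end. Since $X_k$ is binary,
\begin{equation}
h(x_{<k},x_k,\cdot) = h(x_{<k},0,\cdot) + x_k\,\Delta_k h(x_{<k},\cdot),
\qquad
\Delta_k h(x_{<k},\cdot) \coloneqq h(x_{<k},1,\cdot) - h(x_{<k},0,\cdot).
\end{equation}
The term $h(x_{<k},0,\cdot)$ has the same expectation under $r_k$ and $r_{k-1}$, because the marginal law of $X_{<k}$ agrees. The difference of expectations is therefore $\Cov_{p^*}\bigl(X_k, \Delta_k h(X_{<k},\cdot)\bigr)$. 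Supermodularity of $h$ makes $\Delta_k h$ increasing in $x_{<k}$. Applying wPA with $I=\{1,\dots,k-1\}$, $J=\{k\}$, $f=\Delta_k h(\cdot)$ and $g(S)=1\{k\in S\}$ gives nonnegativity. Averaging over the independent tail preserves the sign, which completes the step.

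This argument actually yields $p^* \succsm \nu_I$ for all supermodular $h$, not only increasing ones, since monotonicity of $h$ itself was never used. This is consistent with Remark~\ref{rem:ism-sm-equivalence}. Hence $p^*\succism\nu_I$, and Theorem~\ref{thm:nonvalidity-ism} applies.

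The main obstacle is this one step: keeping careful track of the conditioning on the already independent tail, and checking that the increasing-difference property of a supermodular set function is exactly what wPA requires. I would verify supermodularity $\Rightarrow$ monotone increments directly from \eqref{eq:supermodular} with $S=A\cup\{k\}$ and $T=B$ for $A\subset B\not\ni k$. If the induction proves awkward, the fallback is to cite the known theorem that weakly associated vectors dominate their independent versions in the supermodular order.
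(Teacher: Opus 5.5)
Your proposal is correct and follows the paper's route. You reduce to Theorem~\ref{thm:nonvalidity-ism} by showing that wPA implies $p^*\succsm\nu_I$, and hence $p^*\succism\nu_I$, through a coordinate-by-coordinate interpolation in which each step is a covariance between a single $X_k$ and an increasing difference of a supermodular function, nonnegative by wPA; this is exactly the paper's discrete proof of the Christofides--Vaggelatou comparison in Appendix~\ref{sec:discrete-wPA-sm-ineq}, except that you keep a jointly distributed prefix and make the suffix independent, whereas the paper replaces the prefix by independent copies, a purely cosmetic mirror image.
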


To prove Corollary~\ref{cor:nonvalidity-p-dependence}, identify each subset $S \subset [N]$ with its binary vector $\bm{1}_S = (1\{1 \in S\}, \dots, 1\{N \in S\}) \in \{0,1\}^N$. For $x,y \in \{0,1\}^N$, define
\begin{align}
  x \land y &\coloneqq (\min\{x_1, y_1\}, \dots, \min\{x_N, y_N\}), \label{eq:vector-and}\\
  x \lor y &\coloneqq (\max\{x_1, y_1\}, \dots, \max\{x_N, y_N\}). \label{eq:vector-or}
\end{align}
Then $S \cap T$ and $S \cup T$ correspond to $\bm{1}_S \land \bm{1}_T$ and $\bm{1}_S \lor \bm{1}_T$, respectively. The set inclusion order corresponds to the partial order
\begin{equation}
  x \preceq y \quad \iff \quad x_i \leq y_i \quad \text{for all } i=1, \dots, N.
\end{equation}
In this notation, supermodularity is the condition that $h:\{0,1\}^N \to \R$ satisfies
\begin{equation}
  h(x \land y) + h(x \lor y) \geq h(x) + h(y)
  \label{eq:vector-supermodular}
\end{equation}
for all $x,y \in \{0,1\}^N$. A random subset $Y$ is likewise identified with the random binary vector $X \coloneqq \bm{1}_Y$. Weak positive association can then be written as follows: for any disjoint $I,J \subset [N]$ and any increasing functions $f$ of $(x_i)_{i\in I}$ and $g$ of $(x_j)_{j\in J}$,
\begin{equation}
  \Cov(f((X_i)_{i\in I}), g((X_j)_{j\in J})) \geq 0.
  \label{eq:vector-wPA}
\end{equation}

The following theorem is the key ingredient.
\begin{externaltheorem}[Independent-product comparison in supermodular order; {\cite[Theorem~1]{christofides-2004-connection}}]
  Suppose that a random vector $X \in \{0,1\}^N$ satisfies weak positive association in the sense of \eqref{eq:vector-wPA}. Let $Z=(Z_1, \dots, Z_N)$ be a random vector with independent components such that $Z_i$ has the same distribution as $X_i$ for each $i$. Then $X \succsm Z$. If $X$ is negatively associated, then $Z \succsm X$.
\end{externaltheorem}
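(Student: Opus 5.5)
The plan is to prove the comparison by induction on $N$. At each step one coordinate of $X$ is replaced by an independent copy, and the resulting change in $\E h$ is exactly a covariance that weak positive association controls. The case $N=1$ is trivial, since $X_1$ and $Z_1$ have the same law.

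Fix a supermodular $h:\{0,1\}^N\to\R$ and write $x=(x',x_N)$ with $x'\in\{0,1\}^{N-1}$. Since $x_N\in\{0,1\}$,
\begin{equation}
  h(x',x_N)=h(x',0)+x_N\,\Delta(x'), \qquad \Delta(x')\coloneqq h(x',1)-h(x',0).
\end{equation}
The first step is to check that $\Delta$ is increasing on $\{0,1\}^{N-1}$. Take $x'\preceq y'$ and apply \eqref{eq:vector-supermodular} to the points $(y',0)$ and $(x',1)$. Their meet is $(x',0)$ and their join is $(y',1)$, which gives $\Delta(y')\geq\Delta(x')$.

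Now let $Z_N$ be independent of $X'$ with the law of $X_N$. Then
\begin{equation}
  \E h(X)-\E h(X',Z_N)=\E[X_N\Delta(X')]-\E[X_N]\,\E[\Delta(X')]=\Cov\bigl(\Delta(X'),X_N\bigr).
\end{equation}
Apply \eqref{eq:vector-wPA} with $I=[N-1]$, $J=\{N\}$, $f=\Delta$ and $g(x_N)=x_N$; both functions are increasing, so the covariance is nonnegative. Hence $\E h(X)\geq\E h(X',Z_N)$.

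Next, compare $(X',Z_N)$ with $Z$. The subvector $X'$ is again weakly positively associated, because \eqref{eq:vector-wPA} restricted to disjoint subsets of $[N-1]$ is a special case of the original condition. For each fixed $z\in\{0,1\}$, the section $h(\cdot,z)$ is supermodular on $\{0,1\}^{N-1}$. By the induction hypothesis, $\E h(X',z)\geq\E h(Z',z)$, where $Z'=(Z_1,\dots,Z_{N-1})$ has independent components with the marginals of $X'$. Since $Z_N$ is independent of both $X'$ and $Z'$, conditioning on $Z_N$ and averaging gives $\E h(X',Z_N)\geq\E h(Z',Z_N)=\E h(Z)$. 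Chaining this with the previous inequality yields $X\succsm Z$.

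For the negatively associated case the argument is identical with the inequalities reversed. Negative association gives $\Cov(\Delta(X'),X_N)\leq 0$, and negative association is inherited by subvectors, so the induction closes with $Z\succsm X$.

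I do not expect a serious obstacle. The only delicate points are the two structural facts the induction relies on: the increments $\Delta$ of a supermodular function are increasing, and wPA and NA are inherited by subvectors. Both follow directly from the definitions. The independence of $Z_N$ from $(X',Z')$ is what allows the induction hypothesis to be applied section by section.
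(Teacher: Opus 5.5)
Your proof is correct and is essentially the paper's argument. The appendix proof also replaces one coordinate of $X$ at a time by an independent copy, via the explicit chain $W^{(i)}=(Z_1,\dots,Z_i,X_{i+1},\dots,X_N)$; it shows by supermodularity that the discrete derivative of $h$ in the replaced coordinate is increasing, and it identifies each step's change in $\E h$ as a covariance whose sign is fixed by wPA (or by NA). Your induction on $N$, which replaces the last coordinate first, is this same telescoping chain unrolled in the reverse coordinate order. Your sections $h(\cdot,z)$, combined with the independence of $Z_N$, play the role of the paper's conditioning on the already-replaced coordinates $Z_{1:i}$.
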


This theorem was proved in \cite{christofides-2004-connection} for random vectors in $\R^N$, where $\land$, $\lor$, supermodularity, and weak positive association are defined analogously. Since $\{0,1\}^N \subset \R^N$, the stated binary version follows immediately. For completeness, a simple discrete proof specialized to binary vectors is given in Appendix~\ref{sec:discrete-wPA-sm-ineq}.

\begin{proof}[Proof of Corollary~\ref{cor:nonvalidity-p-dependence}]
  Let $X^*$ be the binary random vector corresponding to a random subset with distribution $p^*$. Let $Z^*=(Z^*_1, \dots, Z^*_N)$ have independent components with $Z^*_i$ distributed as $X^*_i$. Then $Z^*$ corresponds exactly to $\nu_I$. If $X^*$ is weakly positively associated, the independent-product comparison theorem implies $X^* \succsm Z^*$, or equivalently $p^* \succsm \nu_I$. The result follows from Theorem~\ref{thm:nonvalidity-ism}.
\end{proof}

\begin{proof}[Proof of Theorem~\ref{thm:nonvalidity-ism}]
  Applying the extended Pythagorean theorem to $p^*$, $\nu_I$, and $\nu_D$ gives
  \begin{align}
    \KL(p^*\|\nu_D) &= \KL(p^*\|\nu_I) + \KL(\nu_I\|\nu_D) + f^*(D), \label{eq:KL-decomposition-pD} \\
    f^*(D) &= \ang*{\eta^* - \eta(\nu_I), \theta(\nu_I) - \theta(\nu_D)}.
  \end{align}
  Since $\eta_{(1)}(\nu_I) = \eta^*_{(1)}$ and $\theta_{(2+)}(\nu_I) = 0$, the inner-product term becomes
  \begin{equation}
    f^*(D) = \ang*{\theta_{(2+)}(D), \eta_{(2+)}(\nu_I) - \eta^*_{(2+)}}.
  \end{equation}
  Using $\eta_J = \E[1\{J\subset Y\}]$, we can rewrite this term as
  \begin{align}
    f^*(D) &= \sum_{|J|\geq 2} \theta^J(D) \left(\E_{\nu_I}[1\{J\subset Y\}] - \E_{p^*}[1\{J\subset Y\}]\right)\\
    &= \E_{\nu_I}\left[\sum_{|J|\geq 2} \theta^J(D) 1\{J\subset Y\}\right] - \E_{p^*}\left[\sum_{|J|\geq 2} \theta^J(D) 1\{J\subset Y\}\right]\\
    &= \E_{p^*}[-\log\det D_Y] - \E_{\nu_I}[-\log\det D_Y].
    \label{eq:inner-prod-logdet}
  \end{align}
  The last equality follows from \eqref{eq:dpp-theta-coordinate}, which implies
  \begin{equation}
    \sum_{J \subset I;\ |J|\geq 2} \theta^J = \sum_{|J|\geq 2} \theta^J 1\{J\subset I\} = \log\det D_I.
  \end{equation}
  By Lemma~\ref{lem:logdet-sm}, the function $S \mapsto - \log\det D_S$ is increasing and supermodular. Therefore, if $p^* \succism \nu_I$, then \eqref{eq:inner-prod-logdet} is nonnegative. Hence \eqref{eq:KL-decomposition-pD} yields
  \begin{equation}
    \KL(p^*\|\nu_D) \geq \KL(p^*\|\nu_I) + \KL(\nu_I\|\nu_D) \geq \KL(p^*\|\nu_I).
  \end{equation}
  Taking the infimum over $D$ on both sides, Theorem~\ref{thm:DPP-error-dependence} gives $\KL(p^*\|\DPP) \geq \KL(p^*\|\nu_I)$. Equality holds only when $\KL(\nu_I\|\nu_D) = 0$. This implies $\nu_I = \nu_D$, that is, $D = I$.

  In this case the $L$-ensemble kernel has only a quality component, so $L = Q$ is diagonal and $K = Q(I+Q)^{-1}$ by \eqref{eq:L-K-relation}. From \eqref{eq:dpp-eta-coordinate},
  \begin{equation}
    \eta^*_{\{i\}} = \det K_{\{i\}} = K_{ii} = \frac{Q_{ii}}{1 + Q_{ii}},
  \end{equation}
  which gives \eqref{eq:nonvalid-opt-K} and \eqref{eq:nonvalid-opt-L}.
\end{proof}

Section~\ref{sec:numerical-example} numerically illustrates Theorem~\ref{thm:nonvalidity-ism} and Corollary~\ref{cor:nonvalidity-p-dependence} for a concrete two-parameter family of distributions.

\subsection{Lower Bounds from Positively Correlated Pairs}
\label{sec:DPP-error-lb-for-pPC}

Pairwise positive correlation is a weaker attractive property implied by weak positive association, but by itself it does not necessarily imply $p^* \succism \nu_I$. Indeed, one can construct examples satisfying pairwise positive correlation for which a nontrivial DPP approximation with $D \neq I$ improves on $\nu_I$; see Section~\ref{sec:numerical-example}. Nevertheless, because every DPP satisfies pairwise negative correlation, if the true distribution $p^*$ has a pair with strict positive correlation, namely
\begin{equation}
  \eta^*_{\{i\}} \eta^*_{\{j\}} < \eta^*_{\{i,j\}},
\end{equation}
then $p^*$ cannot itself be a DPP. In this case, the DPP approximation error has a strictly positive lower bound.

\begin{lemma}
  \label{lem:KL-dpi}
  For any $S \subset [N]$, let $p^*_S$ be the marginal distribution of $p^*$ on $2^S$, and let $\DPP_S$ denote the class of DPPs on $S$. Then
  \begin{equation}
    \KL(p^*\|\DPP) \geq \KL(p^*_S\|\DPP_S).
  \end{equation}
\end{lemma}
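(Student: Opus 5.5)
The plan combines two standard facts: the data-processing inequality for KL under marginalization, and the closure of the DPP family under restriction to a subset of the ground set.

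\textbf{Step 1 (closure).} Let $p$ be any DPP on $[N]$ with marginal kernel $K$, $O \preceq K \preceq I$. Its marginal $p_S$ on $2^S$ is the law of $Y \cap S$. For $A \subset S$ we have
\begin{equation}
  \Pr(A \subset Y \cap S) = \Pr(A \subset Y) = \det K_A = \det (K_S)_A .
\end{equation}
The principal submatrix $K_S$ is symmetric and satisfies $O \preceq K_S \preceq I$, by Cauchy interlacing or simply by restricting the quadratic forms. Hence $p_S$ is a DPP on $S$ with marginal kernel $K_S$. This is valid for the general marginal-kernel definition. It also covers $L$-ensembles: if $O \prec K \prec I$ then $O \prec K_S \prec I$, so $p_S$ is again an $L$-ensemble, although its $L$-kernel is not simply $L_S$. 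We only need $p_S \in \DPP_S$.

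\textbf{Step 2 (data processing).} The map $A \mapsto A \cap S$ is a deterministic Markov kernel from $2^{[N]}$ to $2^S$. By the data-processing (monotonicity) inequality for KL divergence, $\KL(p^*\|p) \geq \KL(p^*_S\|p_S)$. For a self-contained argument, use the chain rule: $\KL(p^*\|p) = \KL(p^*_S\|p_S) + \E_{p^*_S}\bigl[\KL(p^*(\cdot\mid Y\cap S)\,\|\,p(\cdot \mid Y\cap S))\bigr]$, and the second term is nonnegative. The quantities are finite because $p^*$ is strictly positive, so $p^*_S$ is strictly positive as well; if $p$ lacks full support, both sides may be $+\infty$, which is harmless.

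\textbf{Step 3 (infimum).} By Step 1, $\KL(p^*_S\|p_S) \geq \KL(p^*_S\|\DPP_S)$. Combining with Step 2 gives $\KL(p^*\|p) \geq \KL(p^*_S\|\DPP_S)$ for every DPP $p$ on $[N]$. Taking the infimum over $p$ yields the claim. The infimum may be taken over all DPPs or only over $L$-ensembles, since these are equal for strictly positive $p^*$ by Section~\ref{sec:dpp-preliminary}.

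The only step with any content is Step 1, the restriction-closure of DPPs. It is immediate from the definition \eqref{eq:DPP}, so I expect no real obstacle. The one point needing care is to work with the marginal kernel $K$ rather than $L$, because marginalizing an $L$-ensemble does not give $L_S$.
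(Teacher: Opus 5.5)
Your proposal is correct and follows essentially the same route as the paper's proof: the data processing inequality for the marginalization map $A \mapsto A \cap S$, the fact that $K_S$ is a marginal kernel of $p_S$ so that $p_S \in \DPP_S$, and then an infimum over $p$. Your extra remarks are accurate refinements of the same argument: the chain-rule justification, the harmless infinite case, and the observation that only membership $p_S \in \DPP_S$ (not surjectivity) is needed.
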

\begin{proof}
  Let $\pi:2^{[N]} \to 2^S$ be the marginalization map $\pi(A) = A \cap S$. By the data processing inequality for KL divergence \cite{cover-1999-elements}, for any $p \in \DPP$,
  \begin{equation}
    \KL(p^*\|p) \geq \KL(\pi_\#(p^*)\|\pi_\#(p)) = \KL(p^*_S\|p_S).
  \end{equation}
  The marginal $p_S$ is also a DPP on $S$; if $K$ is the marginal kernel of $p$, then $K_S$ is the marginal kernel of $p_S$ \cite{kulesza-2012-determinantal}. Therefore, as $p$ ranges over DPPs on $[N]$, $p_S$ ranges over DPPs on $S$. Taking the infimum over $p \in \DPP$ gives the claim.
\end{proof}

\begin{proposition}
  \label{prop:KL-lb-pPC}
  Suppose that there exists at least one pair $i \neq j$ such that
  \begin{equation}
    \eta^*_{\{i\}}\eta^*_{\{j\}} < \eta^*_{\{i,j\}}.
  \end{equation}
  Then
  \begin{align}
    \KL(p^*\|\DPP) &\geq \max_{(i,j):\medspace \eta^*_{\{i\}}\eta^*_{\{j\}} < \eta^*_{\{i,j\}}} I_{p^*}(X_i;X_j)\\
    &\geq 8 \max_{(i,j): \medspace \eta^*_{\{i\}}\eta^*_{\{j\}} < \eta^*_{\{i,j\}}} (\eta^*_{\{i,j\}} - \eta^*_{\{i\}}\eta^*_{\{j\}})^2 > 0,
  \end{align}
  where $X_i = 1\{i\in Y\}$ and $I_{p^*}(X_i;X_j)$ denotes the mutual information between $X_i$ and $X_j$ under $p^*$.
\end{proposition}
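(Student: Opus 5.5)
Fix a pair $(i,j)$ with $\eta^*_{\{i\}}\eta^*_{\{j\}} < \eta^*_{\{i,j\}}$ and write $S=\{i,j\}$. By Lemma~\ref{lem:KL-dpi}, $\KL(p^*\|\DPP) \geq \KL(p^*_S\|\DPP_S)$, so it suffices to show $\KL(p^*_S\|\DPP_S) \geq I_{p^*}(X_i;X_j)$ and then take the maximum over such pairs. On the two-element ground set $S$, $p^*_S$ is strictly positive, so Theorem~\ref{thm:DPP-error-dependence} applies with $N=2$. A $2\times2$ diversity matrix is $D=\begin{pmatrix}1&d\\ d&1\end{pmatrix}$ with $|d|<1$. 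By \eqref{eq:dpp-theta-coordinate} the single interaction coordinate is $\theta^{S}(D)=\log(1-d^2)\le 0$, and it ranges over $(-\infty,0]$.

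We then apply the proof scheme of Theorem~\ref{thm:nonvalidity-ism} on $S$, with $\nu_I$ the product of the marginals of $p^*_S$. The extended Pythagorean theorem gives
\begin{equation}
\KL(p^*_S\|\nu_D)=\KL(p^*_S\|\nu_I)+\KL(\nu_I\|\nu_D)+\theta^S(D)\bigl(\eta_S(\nu_I)-\eta^*_S\bigr).
\end{equation}
Here $\eta_S(\nu_I)=\eta^*_{\{i\}}\eta^*_{\{j\}}<\eta^*_{\{i,j\}}$ and $\theta^S(D)\le0$, so the cross term is a product of two nonpositive quantities and is therefore nonnegative. (Alternatively, one can invoke Theorem~\ref{thm:nonvalidity-ism} directly: for two binary variables, positive correlation is equivalent to wPA, and hence to $p^*_S\succism\nu_I$, but the direct computation above avoids this check.) Hence $\KL(p^*_S\|\nu_D)\ge\KL(p^*_S\|\nu_I)$ for every $D$. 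Since $\KL(p^*_S\|\nu_I)=I_{p^*}(X_i;X_j)$, Theorem~\ref{thm:DPP-error-dependence} yields the first inequality.

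For the second inequality, write $I(X_i;X_j)=\KL(p^*_S\|p^*_{\{i\}}\otimes p^*_{\{j\}})$ and apply Pinsker's inequality $\KL\ge 2\,\mathrm{TV}^2$, where $\mathrm{TV}=\tfrac12\|\cdot\|_1$. The difference between the joint table and the product table is $\pm\delta$ in each of its four cells, with $\delta=\eta^*_{\{i,j\}}-\eta^*_{\{i\}}\eta^*_{\{j\}}$. Therefore the $\ell_1$ distance is $4|\delta|$, so $\mathrm{TV}=2|\delta|$ and $I\ge 2\cdot4\delta^2=8\delta^2$. Strict positivity follows from $\delta>0$.

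The main point requiring care is the sign bookkeeping in the cross term, namely that $\theta^S(D)=\log\det D\le0$ for every valid $D$ while $\eta_S(\nu_I)-\eta^*_S<0$ for the chosen pair. The remaining potential pitfall is the constant in Pinsker's inequality, i.e.\ checking that the four cell deviations all equal $\delta$ in absolute value.
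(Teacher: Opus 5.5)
Your proof is correct and follows the same skeleton as the paper's proof: reduce to the two-point marginal via Lemma~\ref{lem:KL-dpi}, show that the independent approximation is optimal on that pair, and finish with Pinsker's inequality. The only difference is in the middle step, where you directly show that the single cross term $\theta^S(D)\bigl(\eta_S(\nu_I)-\eta^*_S\bigr)$ is nonnegative; this is the $N=2$ case of the proof of Theorem~\ref{thm:nonvalidity-ism}, and it is more elementary than the paper's route, which checks that $p^*_{\{i,j\}}$ satisfies the positive lattice condition and then invokes Corollary~\ref{cor:nonvalidity-p-dependence} together with its wPA and supermodular-order machinery.
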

\begin{proof}
  Fix any pair $i\neq j$ satisfying $\eta^*_{\{i\}}\eta^*_{\{j\}} < \eta^*_{\{i,j\}}$. By Lemma~\ref{lem:KL-dpi},
  \begin{equation}
    \KL(p^*\|\DPP) \geq \KL(p^*_{\{i,j\}}\|\DPP_{\{i,j\}}).
  \end{equation}
  The two-dimensional marginal $p^*_{\{i,j\}}$ satisfies the positive lattice condition. The only nontrivial PLC inequality \eqref{eq:PLC} is
  \begin{equation}
    p^*_{\{i,j\}}(\{i\})p^*_{\{i,j\}}(\{j\}) \leq p^*_{\{i,j\}}(\{i,j\})p^*_{\{i,j\}}(\emptyset).
    \label{eq:PLC-pair}
  \end{equation}
  Since
  \begin{align}
    p^*_{\{i,j\}}(\{i,j\}) &= \eta^*_{\{i,j\}},\\
    p^*_{\{i,j\}}(\{i\}) &= \eta^*_{\{i\}} - \eta^*_{\{i,j\}},\\
    p^*_{\{i,j\}}(\{j\}) &= \eta^*_{\{j\}} - \eta^*_{\{i,j\}},\\
    p^*_{\{i,j\}}(\emptyset) &= 1 - \eta^*_{\{i\}} - \eta^*_{\{j\}} + \eta^*_{\{i,j\}},
  \end{align}
  subtracting the left-hand side of \eqref{eq:PLC-pair} from the right-hand side gives
  \begin{equation}
    \eta^*_{\{i,j\}} - \eta^*_{\{i\}}\eta^*_{\{j\}} > 0.
  \end{equation}
  Hence Corollary~\ref{cor:nonvalidity-p-dependence} implies
  \begin{equation}
    \KL(p^*_{\{i,j\}}\|\DPP_{\{i,j\}}) = \KL(p^*_{\{i,j\}}\|p^*_{\{i\}}\otimes p^*_{\{j\}}) = I_{p^*}(X_i;X_j).
  \end{equation}
  To obtain the explicit lower bound, fix such a pair $(i,j)$ and write
  \begin{equation}
      \delta_{ij} \coloneqq \eta^*_{\{i,j\}} - \eta^*_{\{i\}}\eta^*_{\{j\}} > 0.
  \end{equation}
  Then
  \begin{equation}
      \|p^*_{\{i,j\}} - p^*_{\{i\}} \otimes p^*_{\{j\}}\|_1 = 4 \delta_{ij}.
  \end{equation}
  Hence Pinsker's inequality gives
  \begin{equation}
    I_{p^*}(X_i;X_j) = \KL(p^*_{\{i,j\}}\|p^*_{\{i\}} \otimes p^*_{\{j\}}) \geq \frac{1}{2} \|p^*_{\{i,j\}} - p^*_{\{i\}} \otimes p^*_{\{j\}}\|_1^2 = 8\delta_{ij}^2 > 0.
  \end{equation}
  Taking the maximum over all strictly positively correlated pairs completes the proof.
\end{proof}

\subsection{Pairwise Positive Correlation and Local Optimality}
The preceding discussion focused on the effect of positive correlation for a single pair. The condition that all pairs are positively correlated, namely pairwise positive correlation in \eqref{eq:p-PC}, is also closely related to the local approximation behavior of the diversity parameter around $D = I$.

We use the following perturbation expansion for $\nu_D$ in Theorem~\ref{thm:DPP-error-dependence}. Fix a diversity matrix $D$, and perturb it in a symmetric direction $E$ satisfying $E_{ii} = 0$ and $\|E\|_F = 1$ by setting
\begin{equation}
  D_t = D + tE,
\end{equation}
where $t>0$ is sufficiently small so that $D_t \succ O$. Applying the extended Pythagorean theorem to $p^*$, $\nu_D$, and $\nu_{D_t}$, and proceeding as in the derivation of \eqref{eq:inner-prod-logdet}, yields the expansion
\begin{equation}
  \KL(p^*\|\nu_{D_t}) = \KL(p^*\|\nu_D) + \KL(\nu_D\|\nu_{D_t}) + \sum_{k\geq 1} \frac{(-1)^k}{k} t^k \Delta_{k,D}(E),
  \label{eq:KL-perturbation}
\end{equation}
whenever $t\|D_J^{-1}E_J\|_F < 1$ for all $\emptyset \neq J \subset[N]$. Here
\begin{equation}
  \Delta_{k,D}(E) \coloneqq \E_{p^*}[\tr\{(D_Y^{-1} E_Y)^k\}] - \E_{\nu_D}[\tr\{(D_Y^{-1} E_Y)^k\}].
  \label{eq:perturbation-coeff}
\end{equation}
When $Y = \emptyset$, the matrix $D_Y^{-1} E_Y$ is interpreted as the $0 \times 0$ matrix, and we adopt the convention $\tr\{(D_Y^{-1} E_Y)^k\} = 0$. A detailed derivation of \eqref{eq:KL-perturbation} is given in Appendix~\ref{sec:derivation-KL-perturbation}. This expansion implies the following local characterization.

\begin{proposition}
  \label{prop:local-pPC}
  In the minimization of $\KL(p^*\|\nu_D)$ in Theorem~\ref{thm:DPP-error-dependence}, if $D = I$ is a local minimizer, then $p^*$ satisfies pairwise positive correlation. Conversely, if $p^*$ satisfies strict pairwise positive correlation, that is,
  \begin{equation}
    \eta^*_{\{i,j\}} > \eta^*_{\{i\}}\eta^*_{\{j\}} \quad \text{for all } i \neq j,
  \end{equation}
  then $D = I$ is a local minimizer.
\end{proposition}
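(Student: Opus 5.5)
We will use the perturbation expansion \eqref{eq:KL-perturbation} with base point $D=I$, where $\nu_I$ is the independent product with the first-order marginals of $p^*$. For a direction $E$ (symmetric, zero diagonal, $\|E\|_F=1$) and $D_t=I+tE$, we have $D_Y^{-1}E_Y=E_Y$. Since $E$ has zero diagonal, $\tr E_Y=0$, so $\Delta_{1,I}(E)=0$. The first-order term in $t$ therefore vanishes, and the leading behaviour is of order $t^2$.

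We compute the two second-order contributions. First,
\begin{equation}
  \tr(E_Y^2)=\sum_{i\neq j} E_{ij}^2 1\{i,j\in Y\},
\end{equation}
so
\begin{equation}
  \Delta_{2,I}(E)=\sum_{i\neq j}E_{ij}^2\bigl(\eta^*_{\{i,j\}}-\eta^*_{\{i\}}\eta^*_{\{j\}}\bigr).
\end{equation}
This enters \eqref{eq:KL-perturbation} with coefficient $+\tfrac{t^2}{2}$. Second, $\KL(\nu_I\|\nu_{D_t})$ is $O(t^2)$, with leading term $\tfrac{t^2}{2}\,g(\dot\theta,\dot\theta)$ in the Fisher metric at $\nu_I$. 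Here $\dot\theta$ is the tangent of $\theta(\nu_{D_t})$. Its second- and higher-order part is the derivative of $\theta_{(2+)}(I+tE)$; by \eqref{eq:dpp-theta-coordinate} and $\frac{d}{dt}\log\det(I+tE)_J|_{t=0}=\tr E_J=0$, this derivative vanishes. The first-order part of $\dot\theta$ must then also vanish, because $\eta_{(1)}$ is held fixed and the map $\theta_{(1)}\mapsto\eta_{(1)}$ has a regular Jacobian on the slice. So $\KL(\nu_I\|\nu_{D_t})=O(t^3)$, or at worst $O(t^4)$ after checking the second-order term of $\theta_{(2)}$, which is $-t^2E_{ij}^2$. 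We conclude
\begin{equation}
  \KL(p^*\|\nu_{I+tE})=\KL(p^*\|\nu_I)+\frac{t^2}{2}\sum_{i\neq j}E_{ij}^2\delta_{ij}+O(t^3),\qquad \delta_{ij}=\eta^*_{\{i,j\}}-\eta^*_{\{i\}}\eta^*_{\{j\}}.
\end{equation}

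Both directions of the proposition follow from this expansion.

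\emph{Necessity.} Suppose some pair has $\delta_{ij}<0$. Take $E=(e_ie_j^\top+e_je_i^\top)/\sqrt2$. The quadratic coefficient is then $\delta_{ij}<0$, so $\KL$ strictly decreases for small $t$, and $D=I$ is not a local minimizer. Hence a local minimizer at $I$ forces $\delta_{ij}\ge0$ for all pairs, which is pairwise positive correlation.

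\emph{Sufficiency.} Suppose all $\delta_{ij}>0$. The quadratic form $\sum E_{ij}^2\delta_{ij}$ is bounded below by $2\min_{i<j}\delta_{ij}\cdot\tfrac12=\min\delta_{ij}>0$ on the unit sphere of admissible $E$. The expansion is a convergent power series for $t<t_0$ with coefficients bounded uniformly in $E$: the sphere is compact, and $\|E_J\|_F\le1$ gives convergence for $t<1$. Therefore the $O(t^3)$ remainder is uniformly dominated by the positive quadratic term for small $t$. Every neighbourhood point $D\ne I$ has the form $I+tE$, so this gives strict local minimality.

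The main obstacle is rigorously controlling $\KL(\nu_I\|\nu_{D_t})$, that is, showing it contributes nothing at order $t^2$ uniformly in $E$. I will argue directly: $\nu_{D_t}$ is a smooth function of $t$, and $\nu_{D_0}=\nu_I$. The derivative of $\theta(\nu_{D_t})$ at $t=0$ is zero, because both $\theta_{(2+)}$ and, through the implicit-function argument from the proof of Theorem~\ref{thm:DPP-error-dependence}, $\theta_{(1)}$ are stationary. Since $\KL$ is quadratic in the $\theta$-displacement to leading order, it is $O(t^4)$. This makes the remainder estimate uniform over the compact set of directions.
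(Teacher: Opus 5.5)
Your proposal is correct and follows essentially the paper's route. You expand via \eqref{eq:KL-perturbation} at $D=I$, use $\Delta_{1,I}(E)=0$, and identify the $t^2$ coefficient $\tfrac12\sum_{i\neq j}E_{ij}^2(\eta^*_{\{i,j\}}-\eta^*_{\{i\}}\eta^*_{\{j\}})$; you then take single-pair directions for necessity and bound the cubic remainder uniformly for sufficiency, where the paper gets $|R_E(t)|\le \tfrac23 t^3/(1-t)$ from $|\Delta_{k,I}(E)|\le 2$. One simplification: in the sufficiency direction the paper just drops $\KL(\nu_I\|\nu_{I+tE})\ge 0$, so the uniform control of that term you flag as the main obstacle is unnecessary there. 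It is needed only pointwise, for a single fixed direction, in the necessity part, and there your $\dot\theta=0$ argument does the job of Lemma~\ref{lem:KL-perturbation-order}.
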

\begin{proof}
  Setting $D = I$ in \eqref{eq:KL-perturbation} gives
  \begin{align}
    \KL(p^*\|\nu_{I+tE}) &= \KL(p^*\|\nu_I) + \KL(\nu_I\|\nu_{I+tE}) + \sum_{k\geq 1} \frac{(-1)^k}{k} t^k \Delta_{k,I}(E), \label{eq:KL-perturbation-I}\\
    \Delta_{k,I}(E) &= \E_{p^*}[\tr(E_Y^k)] - \E_{\nu_I}[\tr(E_Y^k)].
  \end{align}
  Since $E$ has zero diagonal, $\tr E_Y = 0$ and hence $\Delta_{1,I}(E) = 0$. The first potentially nonzero term is therefore of order $t^2$. For this second-order coefficient,
  \begin{align}
    \E_{p^*}[\tr(E_Y^2)] &= \E_{p^*}\left[\sum_{\substack{i,j \in Y \\ i \neq j}} E_{ij}^2\right] = \sum_{\substack{i,j \in [N] \\ i \neq j}} E_{ij}^2 \E_{p^*}[1\{\{i,j\} \subset Y\}] = \sum_{\substack{i,j \in [N] \\ i \neq j}} E_{ij}^2 \eta^*_{\{i,j\}} ,\\
    \E_{\nu_I}[\tr(E_Y^2)] &= \sum_{\substack{i,j \in [N] \\ i \neq j}} E_{ij}^2 \eta^*_{\{i\}}\eta^*_{\{j\}}.
  \end{align}
  Writing the terms of order $t^3$ and higher as $R_E(t)$, we obtain
  \begin{equation}
    \KL(p^*\|\nu_{I+tE}) - \KL(p^*\|\nu_I) = \frac{t^2}{2}\sum_{\substack{i,j \in [N] \\ i \neq j}} E_{ij}^2 (\eta^*_{\{i,j\}} - \eta^*_{\{i\}}\eta^*_{\{j\}}) + R_E(t) + \KL(\nu_I\|\nu_{I+tE}).
    \label{eq:KL-perturbation-2nd}
  \end{equation}
  By Lemma~\ref{lem:KL-perturbation-order}, $\KL(\nu_I\|\nu_{I+tE}) = O(t^4)$, while $R_E(t) = O(t^3)$. Thus
  \begin{equation}
    \KL(p^*\|\nu_{I+tE}) - \KL(p^*\|\nu_I) = \frac{t^2}{2} \sum_{\substack{i,j \in [N] \\ i \neq j}} E_{ij}^2 (\eta^*_{\{i,j\}} - \eta^*_{\{i\}}\eta^*_{\{j\}}) + O(t^3).
  \end{equation}
  If $D = I$ is a local minimizer, this expression must be nonnegative for every direction $E$. By choosing directions supported on a single off-diagonal pair, we obtain
  \begin{equation}
    \eta^*_{\{i,j\}} - \eta^*_{\{i\}}\eta^*_{\{j\}} \geq 0 \quad \text{for all } i \neq j,
  \end{equation}
  which is pairwise positive correlation.

  Conversely, suppose that the strict inequalities hold for all pairs and set
  \begin{equation}
    \delta \coloneqq \min_{i\neq j} (\eta^*_{\{i,j\}} - \eta^*_{\{i\}}\eta^*_{\{j\}}) > 0.
  \end{equation}
  Since $\KL(\nu_I\|\nu_{I+tE}) \geq 0$, \eqref{eq:KL-perturbation-2nd} implies
  \begin{equation}
    \KL(p^*\|\nu_{I+tE}) - \KL(p^*\|\nu_I) \geq \frac{\delta t^2}{2} \sum_{\substack{i,j \in [N] \\ i \neq j}} E_{ij}^2 + R_E(t) = \frac{\delta t^2}{2} + R_E(t),
    \label{eq:KL-perturbation-lb}
  \end{equation}
  where we used $\|E\|_F = 1$ and $E_{ii} = 0$. Moreover,
  \begin{equation}
    |R_E(t)| \leq \sum_{k \geq 3} \frac{t^k}{k} |\Delta_{k,I}(E)| \leq 2 \sum_{k \geq 3} \frac{t^k}{k} \leq \frac{2}{3} \frac{t^3}{1-t},
  \end{equation}
  where the second inequality uses Lemma~\ref{lem:perturbation-coeff-ub}. Hence the right-hand side of \eqref{eq:KL-perturbation-lb} is strictly positive, uniformly over all admissible directions $E$, whenever $0 < t < 3\delta/(4 + 3\delta)$. Therefore $D = I$ is a local minimizer.
\end{proof}

\section{Repulsive Dependence: Guaranteed Improvement over Independence}
\label{sec:repulsive}
Even when the target distribution $p^*$ is not itself a DPP, one can derive nontrivial upper bounds on the approximation error $\KL(p^*\|\DPP)$ if $p^*$ has a certain degree of repulsive dependence.

\subsection{Upper Bounds via Repulsive Matchings}

We first record two elementary facts that will be used to construct block-diagonal DPP approximations.

\begin{lemma}
  \label{lem:DPP-exact-representation}
  If $N = 1$, every strictly positive distribution on $2^{[N]}$ is a DPP. If $N = 2$, the same holds for every strictly positive distribution $p^*$ on $2^{[N]}$ satisfying
  \begin{equation}
    \eta^*_{\{1,2\}} \leq \eta^*_{\{1\}}\eta^*_{\{2\}}.
  \end{equation}
\end{lemma}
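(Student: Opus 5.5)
The plan is to construct the marginal kernel $K$ explicitly from the $\eta$-coordinates of $p^*$. A distribution on $2^{[N]}$ is determined by its inclusion probabilities $\eta_I=\Pr(I\subset Y)$, with $\eta_\emptyset=1$, through M\"obius inversion. So it suffices to find a symmetric $K$ with $O\preceq K\preceq I$ and $\det K_I=\eta^*_I$ for all nonempty $I$. By \eqref{eq:DPP}, that DPP then coincides with $p^*$.

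For $N=1$, take $K=(\eta^*_{\{1\}})$. Strict positivity gives $\eta^*_{\{1\}}=p^*(\{1\})\in(0,1)$, so $O\prec K\prec I$. (Alternatively, in the language of Theorem~\ref{thm:DPP-error-dependence}, the only diversity matrix is $D=(1)$, there are no coordinates of order two or higher, and $\nu_D=p^*$.)

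For $N=2$, write $a=\eta^*_{\{1\}}$, $b=\eta^*_{\{2\}}$, $c=\eta^*_{\{1,2\}}$. Set
\begin{equation}
  K=\begin{pmatrix} a & s\\ s & b\end{pmatrix},\qquad s=\sqrt{ab-c},
\end{equation}
which is real by the hypothesis $c\le ab$. Then $\det K_{\{1\}}=a$, $\det K_{\{2\}}=b$, and $\det K=ab-s^2=c$, so all inclusion probabilities match. It remains to check $O\preceq K\preceq I$.

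This check is the only real step. We have $\tr K>0$ and $\det K=c=p^*(\{1,2\})>0$, so $K\succ O$. For $I-K$, its diagonal entries are $1-a$ and $1-b$, both positive, and
\begin{equation}
  \det(I-K)=(1-a)(1-b)-s^2=1-a-b+c=p^*(\emptyset)>0.
\end{equation}
Hence $I-K\succ O$ by Sylvester's criterion. Thus $O\prec K\prec I$, and $p^*$ is in fact an $L$-ensemble with $L=K(I-K)^{-1}$.

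Finally, the identity $\Pr(Y=A)=\sum_{J\supseteq A}(-1)^{|J|-|A|}\eta_J$ shows that equal $\eta$'s give equal distributions. This completes the argument; no step should be a serious obstacle beyond the identification $\det(I-K)=p^*(\emptyset)$.
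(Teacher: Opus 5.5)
Your proposal is correct and follows essentially the same route as the paper. Both use the explicit marginal kernel with diagonal entries $\eta^*_{\{i\}}$ and off-diagonal entry $\sqrt{\eta^*_{\{1\}}\eta^*_{\{2\}}-\eta^*_{\{1,2\}}}$ (the paper allows either sign), and both verify $O\prec K\prec I$ from $\det K=\eta^*_{\{1,2\}}>0$, $\det(I-K)=p^*(\emptyset)>0$ and $0<\eta^*_{\{i\}}<1$; your M\"obius-inversion remark only makes explicit that matching all $\eta$-coordinates identifies the distribution, which the paper leaves implicit.
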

\begin{proof}
  The case $N = 1$ is immediate: taking $K = \eta^*_{\{1\}}$ gives $0 < K < 1$ because $p^*$ is strictly positive.
  
  For $N = 2$, it suffices to take the marginal kernel $K$ with
  \begin{equation}
    K_{ii} = \eta^*_{\{i\}}, \quad K_{12} = \pm \sqrt{\eta^*_{\{1\}}\eta^*_{\{2\}} - \eta^*_{\{1,2\}}}.
  \end{equation}
  The assumed pairwise negative correlation ensures that $K_{12}$ is real. Moreover,
  \begin{equation}
    \det K = \eta^*_{\{1,2\}} > 0,
  \end{equation}
  while
  \begin{equation}
    \det(I - K) = 1 - \eta^*_{\{1\}} - \eta^*_{\{2\}} + \eta^*_{\{1,2\}} = p^*(\emptyset) > 0.
  \end{equation}
  Since $0 < \eta^*_{\{i\}} < 1$ for $i = 1,2$, both $K$ and $I - K$ are positive definite. Therefore, $O \prec K \prec I$.
\end{proof}

\begin{lemma}
  \label{lem:block-diagonal-DPP}
  Let $B_1, \dots, B_k$ be a partition of $[N]$. Suppose that the $L$-ensemble kernel of a DPP is block diagonal,
  \begin{equation}
    L = \Diag(L_{B_1}, L_{B_2}, \dots, L_{B_k}).
  \end{equation}
  Then $p_L$ factorizes as the product of the DPP distributions $p_{L_{B_b}}$ on the blocks $B_b$:
  \begin{equation}
    p_L = p_{L_{B_1}} \otimes p_{L_{B_2}} \otimes \cdots \otimes p_{L_{B_k}}.
    \label{eq:DPP-block-product}
  \end{equation}
  Moreover, let $p^*_{B_b}$ be the marginal distribution of $p^*$ on block $B_b$, and define
  \begin{equation}
    \widetilde{p}^{\,*} = p^*_{B_1} \otimes \cdots \otimes p^*_{B_k}.
  \end{equation}
  Then
  \begin{equation}
    \KL(p^*\|p_L) = \KL(p^*\|\widetilde{p}^{\,*}) + \sum_{b=1}^k \KL(p^*_{B_b}\|p_{L_{B_b}}).
    \label{eq:KL-block-decomposition-DPP}
  \end{equation}
\end{lemma}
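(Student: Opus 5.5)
The proof has two parts: the factorization \eqref{eq:DPP-block-product}, then the KL decomposition \eqref{eq:KL-block-decomposition-DPP}.

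\textbf{Factorization.} Any $A \subset [N]$ splits uniquely as $A = \bigsqcup_b (A \cap B_b)$. Because $L$ is block diagonal, the principal submatrix $L_A$ is, after permuting indices, block diagonal with blocks $L_{A \cap B_b}$. Hence
\begin{equation}
  \det L_A = \prod_{b=1}^k \det L_{A\cap B_b}.
\end{equation}
Similarly $L+I$ is block diagonal, so $\det(L+I) = \prod_b \det(L_{B_b}+I)$. Dividing these two identities gives
\begin{equation}
  p_L(A) = \prod_b p_{L_{B_b}}(A\cap B_b),
\end{equation}
which is \eqref{eq:DPP-block-product}. Each $L_{B_b}$ is a principal submatrix of a positive definite matrix, so it is positive definite and $p_{L_{B_b}}$ is a well-defined $L$-ensemble on $B_b$.

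\textbf{KL decomposition.} I would expand the logarithm of $p_L$ directly rather than go through the Pythagorean machinery. Start from
\begin{equation}
  \KL(p^*\|p_L) = \E_{p^*}[\log p^*(Y)] - \sum_b \E_{p^*}[\log p_{L_{B_b}}(Y\cap B_b)].
\end{equation}
Each term $\E_{p^*}[\log p_{L_{B_b}}(Y\cap B_b)]$ depends only on the marginal of $Y \cap B_b$, so it equals $\E_{p^*_{B_b}}[\log p_{L_{B_b}}]$. Now add and subtract $\sum_b \E_{p^*_{B_b}}[\log p^*_{B_b}]$.

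The first group becomes
\begin{equation}
  \E_{p^*}[\log p^*(Y)] - \sum_b \E_{p^*}[\log p^*_{B_b}(Y\cap B_b)] = \KL(p^*\|\widetilde{p}^{\,*}).
\end{equation}
This uses that $\log \widetilde{p}^{\,*}(A) = \sum_b \log p^*_{B_b}(A\cap B_b)$. The remaining group is $\sum_b \KL(p^*_{B_b}\|p_{L_{B_b}})$.

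All logarithms are finite: $p^*$ is strictly positive, so its block marginals are too, and $p_L$ has full support because $L \succ O$.

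Alternatively, one could read the decomposition as a Pythagorean identity in which $\widetilde{p}^{\,*}$ is the $m$-projection of $p^*$ onto the product family. The direct computation above is simpler and avoids that.

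\textbf{Expected difficulty.} Neither step is a real obstacle. The only point needing care is the block-diagonal determinant factorization for an arbitrary subset $A$, which requires noting that $L_{ij}=0$ whenever $i$ and $j$ lie in different blocks.
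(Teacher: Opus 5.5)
Your proof is correct and matches the paper's argument: the block-wise determinant factorization gives the product form, and adding and subtracting the block-marginal log-likelihoods (equivalently, inserting $\log \widetilde{p}^{\,*}$) splits the KL divergence. The only cosmetic difference is that the paper proves the KL identity for an arbitrary product distribution $q = q_{B_1}\otimes\cdots\otimes q_{B_k}$ before specializing to $q = p_L$, and it later reuses that general form for $\nu_I$ in the matching bound.
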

\begin{proof}
  For any $A\subset[N]$, block diagonality gives
  \begin{equation}
    p_L(A) = \frac{\det L_A}{\det(L+I)} = \prod_{b = 1}^k \frac{\det (L_{B_b})_{A \cap B_b}}{\det(L_{B_b} + I)} = \prod_{b = 1}^k p_{L_{B_b}}(A \cap B_b),
  \end{equation}
  which proves \eqref{eq:DPP-block-product}.

  More generally, for a product distribution $q = q_{B_1} \otimes \cdots \otimes q_{B_k}$,
  \begin{align}
    \KL(p^*\|q) &= \sum_{J \subset [N]} p^*(J) \left[\log \frac{p^*(J)}{\widetilde{p}^{\,*}(J)} + \log \frac{\widetilde{p}^{\,*}(J)}{q(J)}\right]\\
    &= \KL(p^*\|\widetilde{p}^{\,*}) + \sum_{b = 1}^k \sum_{J \subset B_b} p^*_{B_b}(J) \log \frac{p^*_{B_b}(J)}{q_{B_b}(J)}\\
    &= \KL(p^*\|\widetilde{p}^{\,*}) + \sum_{b = 1}^k \KL(p^*_{B_b}\|q_{B_b}).
    \label{eq:KL-block-decomposition}
  \end{align}
  Applying this identity to $q = p_L$ gives \eqref{eq:KL-block-decomposition-DPP}.
\end{proof}

\begin{proposition}
  \label{prop:error-matching-ub}
  The DPP approximation error satisfies
  \begin{equation}
    \KL(p^*\|\DPP) \leq \KL(p^*\|\nu_I) - \max_{\mathcal{M}:\ p^* \text{-repulsive matching}} \sum_{(i,j) \in \mathcal{M}} I_{p^*}(X_i;X_j).
  \end{equation}
  Here a $p^*$-repulsive matching is a collection $\mathcal{M}$ of disjoint pairs such that, for every $(i,j) \in \mathcal{M}$,
  \begin{equation}
    \eta^*_{\{i,j\}} \leq \eta^*_{\{i\}}\eta^*_{\{j\}}.
  \end{equation}
\end{proposition}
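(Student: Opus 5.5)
Fix any $p^*$-repulsive matching $\mathcal{M}$; it suffices to show $\KL(p^*\|\DPP) \leq \KL(p^*\|\nu_I) - \sum_{(i,j)\in\mathcal{M}} I_{p^*}(X_i;X_j)$ and then take the maximum over $\mathcal{M}$. The empty matching gives the trivial bound $D=I$, so assume $\mathcal{M}\neq\emptyset$.

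\emph{Block partition and exact block fits.} Use the partition of $[N]$ into the pairs $\{i,j\}\in\mathcal{M}$ together with the singletons $\{k\}$ for unmatched $k$. On each pair block, the marginal $p^*_{\{i,j\}}$ is strictly positive (as a marginal of a strictly positive $p^*$) and satisfies $\eta^*_{\{i,j\}} \leq \eta^*_{\{i\}}\eta^*_{\{j\}}$. Lemma~\ref{lem:DPP-exact-representation} therefore gives a $2\times 2$ kernel $L_{\{i,j\}}$ with $p_{L_{\{i,j\}}} = p^*_{\{i,j\}}$. Its proof produces $O\prec K\prec I$, so this kernel is a genuine $L$-ensemble. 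On each singleton block the same lemma gives $L_{\{k\}}$ with $p_{L_{\{k\}}} = p^*_{\{k\}}$. Let $L$ be the block-diagonal matrix assembled from these blocks; it is positive definite, hence a valid $L$-ensemble kernel on $[N]$.

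\emph{KL evaluation.} By Lemma~\ref{lem:block-diagonal-DPP}, $\KL(p^*\|p_L) = \KL(p^*\|\widetilde{p}^{\,*}) + \sum_b \KL(p^*_{B_b}\|p_{L_{B_b}})$. Every block term vanishes, so $\KL(p^*\|\DPP)\le \KL(p^*\|\widetilde{p}^{\,*})$. It then remains to show
\begin{equation}
  \KL(p^*\|\widetilde{p}^{\,*}) = \KL(p^*\|\nu_I) - \sum_{(i,j)\in\mathcal{M}} I_{p^*}(X_i;X_j).
\end{equation}
For this, apply the general identity derived inside the proof of Lemma~\ref{lem:block-diagonal-DPP} with $q=\nu_I$. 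Since $\nu_I$ is a product over singletons, it is in particular a product over the blocks $B_b$, with block factors $p^*_{\{i\}}\otimes p^*_{\{j\}}$ or $p^*_{\{k\}}$. The identity yields
\begin{equation}
  \KL(p^*\|\nu_I) = \KL(p^*\|\widetilde{p}^{\,*}) + \sum_{(i,j)\in\mathcal{M}} \KL(p^*_{\{i,j\}}\|p^*_{\{i\}}\otimes p^*_{\{j\}}) + \sum_{k} \KL(p^*_{\{k\}}\|p^*_{\{k\}}).
\end{equation}
The singleton terms are zero and each pair term equals $I_{p^*}(X_i;X_j)$, which rearranges to the desired equality. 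Equivalently, this is the chain rule for total correlation.

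\emph{Main obstacle.} Little here is delicate. The one point to check is that the identity \eqref{eq:KL-block-decomposition} holds for any block-product $q$, not only DPPs; the derivation in Lemma~\ref{lem:block-diagonal-DPP} indeed uses only the product structure. One must also note that strict positivity of $p^*$ passes to its marginals, so that Lemma~\ref{lem:DPP-exact-representation} applies on every block.
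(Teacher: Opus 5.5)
Your proposal is correct and follows essentially the same route as the paper's proof. You build a block-diagonal kernel along the matching, use Lemma~\ref{lem:DPP-exact-representation} to zero out the block terms in Lemma~\ref{lem:block-diagonal-DPP}, and then apply the block decomposition identity to $\nu_I$ to identify the improvement as the sum of pairwise mutual informations. Your extra checks on strict positivity of the marginals, $O \prec K \prec I$, and the empty matching are harmless additions.
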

\begin{proof}
  Fix a $p^*$-repulsive matching $\mathcal{M} = \{(i_1,j_1), \dots, (i_k,j_k)\}$. Consider an $L$-ensemble kernel that is block diagonal along this matching: it has $2 \times 2$ blocks $L_{\{i_b,j_b\}}$ for $b=1, \dots, k$, and one-dimensional diagonal blocks $L_{ii}$ for items not contained in any matched pair. By Lemma~\ref{lem:block-diagonal-DPP},
  \begin{align}
    \KL(p^*\|p_L) &= \KL(p^*\|\widetilde{p}^{\,*}) + \sum_{b = 1}^k \KL(p^*_{\{i_b,j_b\}}\|p_{L_{\{i_b,j_b\}}})\\
    &\quad + \sum_{i \in [N] \setminus \bigcup_{b = 1}^k \{i_b,j_b\}} \KL(p^*_{\{i\}}\|p_{L_{\{i\}}}).
    \label{eq:KL-matching-decomposition}
  \end{align}
  By Lemma~\ref{lem:DPP-exact-representation}, the second and third terms can be made zero by choosing the block kernels appropriately. Hence
  \begin{equation}
    \KL(p^*\|\DPP) \leq \KL(p^*\|\widetilde{p}^{\,*}).
    \label{eq:error-matching-ub}
  \end{equation}

  On the other hand, the independent distribution $\nu_I$ also factorizes according to the same matching. Applying \eqref{eq:KL-block-decomposition} to $\nu_I$ gives
  \begin{align}
    \KL(p^*\|\nu_I) &= \KL(p^*\|\widetilde{p}^{\,*}) + \sum_{b = 1}^k \KL(p^*_{\{i_b,j_b\}}\|p^*_{\{i_b\}} \otimes p^*_{\{j_b\}})\\
    &= \KL(p^*\|\widetilde{p}^{\,*}) + \sum_{b = 1}^k I_{p^*}(X_{i_b}; X_{j_b}).
  \end{align}
  Combining this identity with \eqref{eq:error-matching-ub}, and then maximizing over all $p^*$-repulsive matchings, proves the claim.
\end{proof}

\subsection{Off-Block Perturbations and Local Optimality}

Proposition~\ref{prop:error-matching-ub} shows that a block-diagonal kernel based on a repulsive matching can improve the approximation error over the independent distribution. However, this construction only uses pairwise interactions. The following result gives a local criterion for whether a block-structured diversity matrix can be further improved by introducing off-block interactions.

\begin{proposition}
  \label{prop:characterization-improvement}
  Let $B_1, \dots, B_k$ be a partition of $[N]$. Suppose that a diversity matrix $D$ is block diagonal with respect to this partition and is a local minimizer of $\KL(p^*\|\nu_D)$ among diversity matrices with the same block-diagonal structure. We call a symmetric matrix $H$ an off-block perturbation if $H_{B_b} = O_{B_b}$ for all $b = 1, \dots, k$. Then, using $\Delta_{2,D}$ from \eqref{eq:perturbation-coeff}, the following statements hold.
  \begin{enumerate}
    \item If $\Delta_{2,D}(H) > 0$ for every nonzero off-block perturbation $H$, then $D$ is also a local minimizer among all diversity matrices.
    \item If there exists a nonzero off-block perturbation $H$ such that $\Delta_{2,D}(H) < 0$, then for all sufficiently small $t > 0$,
    \begin{equation}
      \KL(p^*\|\nu_{D+tH}) < \KL(p^*\|\nu_D).
    \end{equation}
  \end{enumerate}
\end{proposition}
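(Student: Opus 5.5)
We use the perturbation expansion \eqref{eq:KL-perturbation} around the block-diagonal $D$. Any admissible perturbation direction $E$ (symmetric, zero diagonal, $\|E\|_F=1$) decomposes uniquely as $E = E^{\mathrm{in}} + E^{\mathrm{off}}$. Here $E^{\mathrm{in}}$ is supported on the diagonal blocks and $E^{\mathrm{off}}$ is an off-block perturbation. The key structural fact to establish first concerns the linear coefficient $\Delta_{1,D}$. Because $D$ is block diagonal, $D_Y^{-1}$ is block diagonal for every $Y$, so $\tr(D_Y^{-1}H_Y)=0$ for any off-block $H$. Hence $\Delta_{1,D}(E)=\Delta_{1,D}(E^{\mathrm{in}})$, and first-order optimality within the block-diagonal class gives $\Delta_{1,D}(E^{\mathrm{in}})=0$. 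The remaining first-order terms, coming from $\KL(\nu_D\|\nu_{D_t})$, vanish to second order at $t=0$.

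For the second-order term, $\KL(\nu_D\|\nu_{D+tE})$ is a nonnegative $O(t^2)$ quantity. In the expansion its $t^2$ coefficient combines with $\tfrac12\Delta_{2,D}(E)$ to give the Hessian of $F(D)\coloneqq\KL(p^*\|\nu_D)$ at $D$. Next we show the Hessian has no cross terms between the in-block and off-block parts. Expanding $\tr\{(D_Y^{-1}(E^{\mathrm{in}}_Y+E^{\mathrm{off}}_Y))^2\}$, the cross term is $2\tr(D_Y^{-1}E^{\mathrm{in}}_Y D_Y^{-1}E^{\mathrm{off}}_Y)$. The product of the three block-diagonal factors is block diagonal, and multiplying by an off-block matrix gives zero trace. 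So the cross term vanishes in $\Delta_{2,D}$. For the $\KL(\nu_D\|\nu_{D_t})$ part, its second-order term is the Fisher metric applied to the first derivatives of $\theta_{(2+)}$. These derivatives are $\partial_t\theta^J=\tr(D_J^{-1}E_J)$ after Möbius inversion; the derivative vanishes for off-block directions, so this part involves only $E^{\mathrm{in}}$. This gives
\begin{equation}
F(D+tE)-F(D) = \tfrac{t^2}{2}\bigl[\mathcal{H}^{\mathrm{in}}(E^{\mathrm{in}}) + \Delta_{2,D}(E^{\mathrm{off}})\bigr] + O(t^3),
\end{equation}
where $\mathcal{H}^{\mathrm{in}}$ is the Hessian restricted to block-diagonal directions.

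Statement 2 follows directly. Take $E=H/\|H\|_F$ with $E^{\mathrm{in}}=0$. The $\KL(\nu_D\|\nu_{D+tH})$ term is then $O(t^4)$, by the argument of Lemma~\ref{lem:KL-perturbation-order}, since first derivatives of $\theta_{(2+)}$ vanish in off-block directions. The linear term is zero, so the leading term is $\tfrac{t^2}{2}\Delta_{2,D}(H)<0$, and $F(D+tH)<F(D)$ for small $t$.

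Statement 1 is the main obstacle, because local minimality within the block class gives only $\mathcal{H}^{\mathrm{in}}\succeq 0$, not strict positivity. The degenerate in-block directions therefore need handling. The plan is a two-stage argument rather than a pure Hessian test. For a general nearby $D'=D+E^{\mathrm{in}}+E^{\mathrm{off}}$, compare $F(D')$ with $F(D+E^{\mathrm{in}})$, which is at least $F(D)$ by block-local minimality. Then expand in $E^{\mathrm{off}}$ around the block-diagonal point $D+E^{\mathrm{in}}$. By the vanishing-trace argument above, the first-order term in $E^{\mathrm{off}}$ is zero at any block-diagonal base point. The second-order term is $\tfrac12\Delta_{2,D+E^{\mathrm{in}}}(E^{\mathrm{off}})$ plus a nonnegative KL-type contribution. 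Strict positivity of $\Delta_{2,D}$ on the compact unit sphere of off-block perturbations gives a margin $c>0$. Continuity of $\Delta_{2,\cdot}$ in the base point keeps it above $c/2$ for small $E^{\mathrm{in}}$. Finally, the cubic remainder is bounded uniformly via the bound of Lemma~\ref{lem:perturbation-coeff-ub} adapted to $D_Y^{-1}$, which is uniformly bounded near $D$. Together these give $F(D')\geq F(D+E^{\mathrm{in}})\geq F(D)$ in a neighborhood, which proves local minimality.
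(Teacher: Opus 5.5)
Your proposal is correct and follows essentially the same route as the paper. The shared ingredients are the block/off-block splitting $E=G+H$ with vanishing cross terms in $\Delta_{1,D}$, $\Delta_{2,D}$ and the Fisher term, and the $O(t^4)$ bound on $\KL(\nu_D\|\nu_{D+tH})$ for the second statement. For the first statement, both use the two-stage comparison $\KL(p^*\|\nu_{D+tG+tH})\geq\KL(p^*\|\nu_{D+tG})\geq\KL(p^*\|\nu_D)$, based on the vanishing first-order off-block term at the block-diagonal base point, continuity of $\Delta_{2,\cdot}$ with compactness of the off-block unit sphere, and Lemma~\ref{lem:perturbation-coeff-ub} for the cubic remainder. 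You also correctly note that block-local minimality only gives a positive semidefinite in-block Hessian, which is exactly why the paper relies on this two-stage argument rather than a pure second-order test.
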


\begin{proof}
  Let $\mathcal{L}(D) \coloneqq \KL(p^*\|\nu_D)$. For an arbitrary symmetric perturbation $E$ satisfying $E_{ii} = 0$ and $\|E\|_F = 1$, decompose it into its block and off-block components as
  \begin{equation}
    E = G + H.
  \end{equation}
  A direct calculation gives
  \begin{align}
    \Delta_{1,D}(E) &= \Delta_{1,D}(G) + \Delta_{1,D}(H),\\
    \Delta_{2,D}(E) &= \Delta_{2,D}(G) + \Delta_{2,D}(H).
  \end{align}
  To verify the decomposition of $\Delta_{2,D}(E)$, observe that
  \begin{align}
      \tr\{(D_J^{-1} E_J)^2\} &= \tr\{(D_J^{-1} G_J + D_J^{-1} H_J)^2\}\\
      &= \tr\{(D_J^{-1} G_J)^2\} + \tr\{(D_J^{-1} H_J)^2\} + 2\tr\{D_J^{-1} G_J D_J^{-1} H_J\}.
  \end{align}
  Since $D_J^{-1} G_J$ is block diagonal and $D_J^{-1} H_J$ is off-block, their product is off-block and therefore has zero trace. For the same reason, $\tr\{D_J^{-1} H_J\}=0$ for every $J \subset [N]$, and hence $\Delta_{1,D}(H) = 0$.

  Uniformly over the admissible perturbation directions, \eqref{eq:KL-perturbation} and Lemma~\ref{lem:KL-perturbation-order} imply, for all sufficiently small $t > 0$, that
  \begin{equation}
    \mathcal{L}(D + tE) - \mathcal{L}(D) = -t\Delta_{1,D}(G) + \frac{t^2}{2} \{f(G) + \Delta_{2,D}(G) + \Delta_{2,D}(H)\} + O(t^3),
  \end{equation}
  where the nonnegative term $f(G)$ arises from $\KL(\nu_D\|\nu_{D+tE})$. Applying this expansion with $H = O$ and using the local optimality of $D$ within the block-diagonal family, we obtain
  \begin{equation}
    \Delta_{1,D}(G) = 0, \quad f(G) + \Delta_{2,D}(G) \geq 0.
  \end{equation}

  If there exists an off-block perturbation $H \neq O$ with $\Delta_{2,D}(H) < 0$, take $E = H/\|H\|_F$. Then
  \begin{equation}
    \mathcal{L}(D + tE) - \mathcal{L}(D) = \frac{t^2}{2\|H\|_F^2} \Delta_{2,D}(H) + O(t^3),
  \end{equation}
  which is strictly negative for all sufficiently small $t > 0$. Replacing $t$ by $t\|H\|_F$ proves the claimed inequality for $D + tH$ and hence the second assertion.

  Conversely, suppose that $\Delta_{2,D}(H) > 0$ for every nonzero off-block $H$. By compactness of the unit sphere in the finite-dimensional off-block subspace, there exists
  \begin{equation}
    \lambda \coloneqq \min_{H: \text{off-block}, \medspace \|H\|_F=1} \Delta_{2,D}(H) > 0.
  \end{equation}

  Since $D+tG$ remains block diagonal, the same trace argument as above gives $\Delta_{1,D+tG}(H)=0$. Therefore, by \eqref{eq:KL-perturbation} and $\KL(\nu_{D + tG}\|\nu_{D + tE}) \geq 0$, there exists $\delta > 0$, independent of $E$, such that, for every admissible $E$ and every $0 < t < \delta$,
  \begin{equation}
    \mathcal{L}(D + tG + tH) - \mathcal{L}(D + tG) \geq \frac{t^2}{2} \Delta_{2, D + tG}(H) + \sum_{k \geq 3} \frac{(-1)^k}{k} t^k \Delta_{k,D + tG}(H).
    \label{eq:KL-offblock-perturbation}
  \end{equation}
  For the convergence requirement in \eqref{eq:KL-perturbation}, we may initially take $\delta = \lambda_{\min}(D)/2$. Indeed, for every nonempty $J \subset [N]$,
  \begin{equation}
    \|(D_J + tG_J)^{-1}H_J\|_F \leq \|(D_J + tG_J)^{-1}\|_\mathrm{op}\|H_J\|_F \leq \frac{1}{\lambda_{\min}(D_J + tG_J)},
  \end{equation}
  where we used $\|H_J\|_F \leq \|H\|_F \leq \|E\|_F = 1$. Moreover,
  \begin{equation}
    \lambda_{\min}(D_J + tG_J) \geq \lambda_{\min}(D_J) - t\|G_J\|_\mathrm{op} \geq \lambda_{\min}(D) - t.
    \label{eq:D_J+tG_J_eigen_ineq}
  \end{equation}
  Hence, for every $0 < t < \delta$,
  \begin{equation}
    t\|(D_J + tG_J)^{-1}H_J\|_F \leq \frac{t}{\lambda_{\min}(D) - t} < 1.
  \end{equation}
  Since $\|G\|_F \leq 1$, $D+tG$ converges to $D$ uniformly over the admissible directions $E$ as $t \downarrow 0$. Thus, by continuity of $(D,H) \mapsto \Delta_{2,D}(H)$ and compactness of the unit sphere in the finite-dimensional off-block subspace, we may decrease $\delta$, independently of $E$, so that, for every admissible $E$ with $H \neq O$ and every $0 < t < \delta$,
  \begin{equation}
    \Delta_{2,D + tG}\left(\frac{H}{\|H\|_F}\right) \geq \frac{\lambda}{2}.
  \end{equation}
  Consequently, for every off-block $H$ and every $0 < t < \delta$,
  \begin{equation}
    \Delta_{2,D + tG}(H) \geq \frac{\lambda}{2}\|H\|_F^2.
  \end{equation}
  For the series in the second term of \eqref{eq:KL-offblock-perturbation}, Lemma~\ref{lem:perturbation-coeff-ub} gives, for every nonzero off-block $H$ and every $0 < t < \delta$,
  \begin{align}
    \left|\Delta_{k,D + tG}\left(\frac{H}{\|H\|_F}\right)\right| &\leq 2\left(\max_{\emptyset \neq J \subset [N]} \|(D_J + tG_J)^{-1}\|_\mathrm{op}\right)^k\\
    &\leq \frac{2}{(\lambda_{\min}(D) - t)^k} \leq 2\left(\frac{2}{\lambda_{\min}(D)}\right)^k.
  \end{align}
  The second inequality follows from \eqref{eq:D_J+tG_J_eigen_ineq}. Therefore, setting $M \coloneqq 2/\lambda_{\min}(D)$ and using the homogeneity of $\Delta_{k,D+tG}$ in $H$ gives $|\Delta_{k,D + tG}(H)| \leq 2M^k\|H\|_F^k$. Since $\|H\|_F \leq \|E\|_F = 1$, for $0 < t < \min\{\delta, 1/(2M)\}$,
  \begin{equation}
    \left|\sum_{k \geq 3} \frac{(-1)^k}{k} t^k \Delta_{k,D + tG}(H)\right| \leq 2 \sum_{k \geq 3} \frac{(M\|H\|_F t)^k}{k} \leq \frac{4}{3} \|H\|_F^2 M^3 t^3.
  \end{equation}
  Therefore, the right-hand side of \eqref{eq:KL-offblock-perturbation} is bounded below by
  \begin{equation}
    \|H\|_F^2 t^2 \left(\frac{\lambda}{4} - \frac{4}{3} M^3 t\right),
  \end{equation}
  which is nonnegative whenever
  \begin{equation}
    0 < t < \min\left\{\delta, \frac{1}{2M}, \frac{3\lambda}{16M^3}\right\}.
  \end{equation}
  Hence, for all sufficiently small $t > 0$,
  \begin{equation}
    \mathcal{L}(D + tG + tH) \geq \mathcal{L}(D + tG).
  \end{equation}
  Since $D$ is locally optimal within the block-diagonal family and $\|G\|_F \leq 1$, we also have $\mathcal{L}(D + tG) \geq \mathcal{L}(D)$ uniformly over the admissible directions for all sufficiently small $t > 0$. Thus, $D$ is a local minimizer among all diversity matrices.
\end{proof}

Proposition~\ref{prop:characterization-improvement} indicates that, once a diversity matrix has been locally optimized within a given block structure, off-block directions are the natural directions to examine for further local improvement.

\section{Numerical Illustration for \texorpdfstring{$N = 3$}{N = 3}}
\label{sec:numerical-example}

We consider the case $N = 3$ and the following two-parameter family of distributions:
\begin{align}
  p_{\alpha,\beta}(A) &= \exp\left[\alpha 1\{|A| = 2\} + \beta 1\{|A| = 3\} - \psi(\alpha,\beta)\right], \label{eq:curie-weiss}\\
  \psi(\alpha,\beta) &= \log(4 + 3e^\alpha + e^\beta).
\end{align}
This is a subfamily of the log-linear model \eqref{eq:log-linear} in which the interaction parameters are constant within each subset size:
\begin{equation}
  \theta^{\{i\}} = 0, \quad \theta^{\{i,j\}} = \alpha, \quad \theta^{\{1,2,3\}} = \beta - 3\alpha.
\end{equation}
For each choice of $(\alpha, \beta)$, we regard $p_{\alpha,\beta}$ as the target distribution $p^*$ and numerically approximate
\begin{equation}
  \Delta \coloneqq \KL(p_{\alpha,\beta}\|\nu_I) - \KL(p_{\alpha,\beta}\|\DPP).
\end{equation}
The resulting values are shown in Figure~\ref{fig:heatmap}. For each parameter value, we also compute certified upper and lower bounds, denoted by $\Delta_{\mathrm{upper}}$ and $\Delta_{\mathrm{lower}}$, respectively, on $\Delta$ using a branch-and-bound procedure. The computational details are given in Appendix~\ref{sec:numerical-bound-computation}. Points satisfying $\Delta_{\mathrm{lower}} > \delta$, which certify a non-negligible improvement over the independent distribution $\nu_I$, are marked by black dots. Points satisfying $\Delta_{\mathrm{upper}} < \delta$, which certify that no meaningful improvement over $\nu_I$ is possible, are marked by crosses. We use $\delta = 10^{-8}$.

\begin{figure}[!t]
  \centering
  \includegraphics[width=\linewidth]{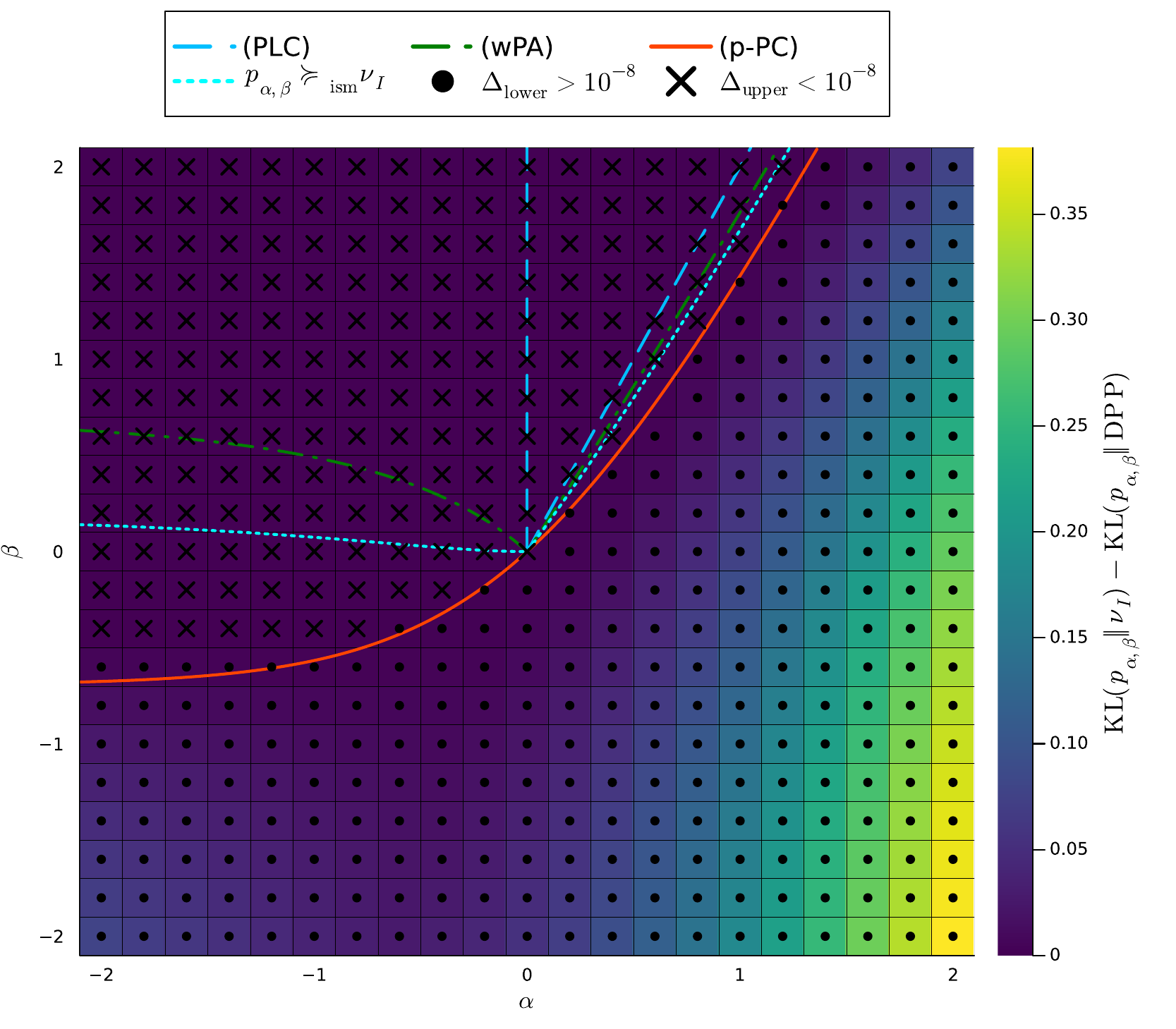}
  \caption{Numerical approximations of $\KL(p_{\alpha,\beta}\|\nu_I) - \KL(p_{\alpha,\beta}\|\DPP)$ for the target family $p_{\alpha,\beta}$ defined in \eqref{eq:curie-weiss}, computed over a grid of parameter values $(\alpha, \beta)$. Each cell uses the value of $(\alpha, \beta)$ at its center. The grid ranges over $-2 \leq \alpha \leq 2$ and $-2 \leq \beta \leq 2$ with mesh size $0.2$}
  \label{fig:heatmap}
\end{figure}

The dependence structure of this family changes with $(\alpha, \beta)$. Since $N = 3$, the sample space $2^{[N]}$ is small. Moreover, both $p_{\alpha,\beta}$ and $\nu_I$ are exchangeable, in the sense that $p_{\alpha,\beta}(A) = p_{\alpha,\beta}(\sigma(A))$ for every permutation $\sigma:[N] \to [N]$, where $\sigma(A) = \{\sigma(i) \mid i \in A\}$. These facts allow the dependence properties to be expressed explicitly as constraints on $(\alpha, \beta)$. The resulting conditions are summarized in Table~\ref{tb:cond-p-dependence}, and their derivations are given in Appendix~\ref{sec:dependence-conditions-n3}.

\begin{table}[!tbp]
  \centering
  \caption{Necessary and sufficient conditions, in terms of $(\alpha, \beta)$, for several attractive-dependence properties.}
  \begin{tabular}{l|l}
    (PLC) & $0 \leq \alpha$, $\beta \geq 2 \alpha$ \\
    (wPA) & $\beta \geq \begin{cases}
      \log(2 - e^\alpha), & \alpha \leq 0, \\
      \log\left(\frac{2e^{2\alpha} + e^\alpha}{3}\right), & \alpha \geq 0
    \end{cases}$ \\
    (p-PC) & $\beta \geq \log\left(\frac{e^{2\alpha} + 1}{2}\right)$ \\
    $p_{\alpha,\beta} \succism \nu_I$ & $\beta \geq \begin{cases}
      \log\left((e^\alpha + 3)^{3/2} - 3e^\alpha - 4\right), & \alpha \leq 0,\\
      \log\left(\frac{3e^{2\alpha} - 12e^\alpha - 13 + \sqrt{(e^\alpha + 3)^3 (9e^\alpha + 7)}}{10}\right), & \alpha \geq 0
    \end{cases}$
  \end{tabular}
  \label{tb:cond-p-dependence}
\end{table}

Based on these characterizations, Figure~\ref{fig:heatmap} also displays the boundary curves for the regions satisfying PLC, wPA, p-PC, and $p_{\alpha,\beta} \succism \nu_I$. For each curve, the region above the curve, corresponding to larger values of $\beta$, is the region where the corresponding property holds. The figure shows that, throughout the region $p_{\alpha,\beta} \succism \nu_I$, there is no substantial improvement over the independent approximation $\nu_I$. This observation is consistent with Theorem~\ref{thm:nonvalidity-ism}.

On the other hand, there are parameter values, such as $(\alpha, \beta) = (-2, -0.6)$, for which p-PC holds while $p_{\alpha,\beta} \succism \nu_I$ does not, and a DPP approximation improves over $\nu_I$. This illustrates the point discussed in Section~\ref{sec:DPP-error-lb-for-pPC}: p-PC is a weaker attractive-dependence condition and does not imply the assumption of Theorem~\ref{thm:nonvalidity-ism}.

\section{Conclusion}
\label{sec:conclusion}

We studied the population-level approximation of strictly positive distributions on random subsets by determinantal point processes under the forward Kullback--Leibler divergence. Our central result is a structural reduction based on the quality--diversity decomposition: the optimization separates into matching the first-order inclusion probabilities of the target distribution and optimizing the diversity component responsible for the interactions represented by the DPP. This reduction provides a common basis for the two dependence regimes considered in this paper. For attractively dependent targets, we obtained conditions under which the independent product distribution with the same first-order marginals is globally optimal. For repulsively dependent targets, we constructed approximations that improve upon this independent baseline and quantified the guaranteed improvement using mutual information associated with negatively correlated pairs.

The strict positivity assumption on the target distribution $p^*$ should be noted. This assumption is mainly technical: it ensures that $p^*$ lies in the interior of the probability simplex, where the global log-linear coordinates used in our analysis are well defined. If distributions on the boundary were allowed, some of the natural parameters could cease to be finite, and the present information-geometric argument would require modification. We have also focused on population-level approximation and have not addressed estimation from finitely many observations. Since the minimization of the forward KL divergence considered here is the population counterpart of maximum likelihood estimation, the present analysis provides a natural starting point for studying consistency, finite-sample error, and the estimation of the optimal diversity structure from data.

An important open problem is to sharpen, and where possible globalize, the local analyses developed in this paper. On the attractive side, we obtained a global optimality result for target distributions satisfying $p^* \succism \nu_I$, whereas pairwise positive correlation yielded only local optimality conditions and approximation-error lower bounds based on individual pairs. A corresponding gap remains on the repulsive side. The matching construction provides a global improvement guarantee, but it exploits only disjoint pairwise interactions and is not expected to be sharp in general. A more complete theory should incorporate interactions among three or more elements and characterize both the globally optimal diversity structure and the maximal improvement over the independent approximation. The off-block perturbation analysis indicates one possible route toward exploiting such higher-order structure, but it currently provides only local criteria. Extending these criteria to global and quantitatively sharp results is therefore one of the main directions for future work.

\begin{appendices}
\renewcommand{\theHequation}{\Alph{section}.\arabic{equation}}
\section{Technical Proofs}
\subsection{Proof of Mixed-coordinate Pythagorean decomposition}
\label{sec:proof-k-cut-pythagoras}
It remains to justify the existence assertion used in the proof of the mixed-coordinate Pythagorean decomposition in Section~\ref{sec:ig-coordinates}: namely, for given points $p,r$, there exists a point $q$ whose $\eta$-coordinates up to order $k$ coincide with those of $p$ and whose $\theta$-coordinates of order $k+1$ and higher coincide with those of $r$.

\begin{proof}
  Consider the subfamily of the log-linear model \eqref{eq:log-linear} in which the $\theta$-coordinates of order $k+1$ and higher are fixed at $\theta_{(k+1)}(r), \dots, \theta_{(N)}(r)$:
  \begin{equation}
    q_{\theta_{(1:k)}, \theta_{(k+1:N)}(r)}(A) \coloneqq \exp\left[c(A) + \sum_{\substack{\emptyset \neq I \subset [N]\\ |I| \leq k}} \theta^I 1\{I \subset A\} - \psi_{\theta_{(k+1:N)}}(\theta_{(1:k)})\right].
  \end{equation}
  Here $\theta_{(1:k)} = (\theta_{(1)}, \dots, \theta_{(k)})$, and $\theta_{(k+1:N)}$ is defined analogously. The offset term is
  \begin{equation}
    c(A) = \sum_{\substack{I \subset [N] \\ |I| > k}} \theta^I(r) 1\{I \subset A\}.
  \end{equation}
  This is a regular and minimal exponential family with sufficient-statistic vectors $(1\{I \subset Y\})_{\emptyset \neq I \subset [N], \medspace |I| \leq k}$. Its mean-parameter space is the convex hull of these vectors. Moreover, $\eta_{(1:k)}(p)$ is their convex combination with positive weights, and hence it lies in the interior of the mean-parameter space. Therefore, by \cite[Proposition~3.2 and Theorem~3.3]{wainwright-2008-graphical}, the gradient map
  \begin{equation}
    \nabla_{\theta_{(1:k)}}\psi_{\theta_{(k+1:N)}}: \theta_{(1:k)} \mapsto \nabla_{\theta_{(1:k)}} \psi_{\theta_{(k+1:N)}}(\theta_{(1:k)}) = \eta_{(1:k)}
  \end{equation}
  is a bijection from the natural-parameter space onto the interior of the mean-parameter space. Consequently, there exists a value of $\theta_{(1:k)}$ corresponding to $\eta_{(1:k)}(p)$. The distribution $q_{\theta_{(1:k)}, \theta_{(k+1:N)}(r)}$ with this value of $\theta_{(1:k)}$ is the desired point $q$.
\end{proof}

\subsection{A Discrete Proof of the Supermodular-Order Comparison}
\label{sec:discrete-wPA-sm-ineq}

\begin{proof}
  The argument follows the same strategy as the proof in \cite{christofides-2004-connection}, specialized here to binary random vectors. The setting is the same as in the proof of Corollary~\ref{cor:nonvalidity-p-dependence}. Let $X \in \{0,1\}^N$ satisfy weak positive association in the sense of \eqref{eq:vector-wPA}, and let $Z \in \{0,1\}^N$ be the vector obtained by replacing the components of $X$ with independent copies having the same one-dimensional marginal distributions. We may assume that $X$ and $Z$ are independent.

  For $i=0,1,\dots,N$, define the interpolating random vector
  \begin{equation}
    W^{(i)} \coloneqq (Z_1, \dots, Z_i, X_{i+1}, \dots, X_N).
  \end{equation}
  Then $W^{(0)} = X$ and $W^{(N)} = Z$. It suffices to show that, for every supermodular function $h:\{0,1\}^N \to \R$,
  \begin{equation}
    \E[h(W^{(i)})] \geq \E[h(W^{(i+1)})] \quad (i=0, 1, \dots, N-1).
  \end{equation}
  Chaining these inequalities yields $\E[h(X)] \geq \E[h(Z)]$.

  Fix $i$ and condition on the first $i$ coordinates. For $u \in \{0,1\}^i$, define
  \begin{equation}
    h_u:\{0,1\}^{N-i} \to \R, \quad h_u(x_{i+1}, \dots, x_N) = h(u_1, \dots, u_i, x_{i+1}, \dots, x_N).
  \end{equation}
  Writing $x_{i+1:N} = (x_{i+1}, \dots, x_N)$, we have
  \begin{equation}
    h_u(x_{i+1:N}) = h_u(0,x_{i+2:N}) + x_{i+1} \Delta_{i+1} h_u(x_{i+2:N}),
    \label{eq:hu-representation}
  \end{equation}
  where
  \begin{equation}
    \Delta_{i+1} h_u(x_{i+2:N}) \coloneqq h_u(1, x_{i+2:N}) - h_u(0, x_{i+2:N}).
  \end{equation}

  The function $\Delta_{i+1} h_u$ is increasing in $x_{i+2:N}$. Indeed, if $x_{i+2:N} \preceq x'_{i+2:N}$, set
  \begin{equation}
    v_1=(u, 0, x'_{i+2:N}), \quad v_2=(u, 1, x_{i+2:N}).
  \end{equation}
  Then
  \begin{align}
    &\Delta_{i+1} h_u(x'_{i+2:N}) - \Delta_{i+1} h_u(x_{i+2:N})\\
    &= h(v_1 \lor v_2) - h(v_1) - h(v_2) + h(v_1 \land v_2) \geq 0,
  \end{align}
  where the last inequality follows from the supermodularity condition \eqref{eq:vector-supermodular}.

  Using \eqref{eq:hu-representation},
  \begin{align}
    &\E[h_u(X_{i+1}, X_{i+2:N})] - \E[h_u(Z_{i+1}, X_{i+2:N})]\\
    &= \E[X_{i+1} \Delta_{i+1} h_u(X_{i+2:N})] - \E[Z_{i+1}] \E[\Delta_{i+1} h_u(X_{i+2:N})]\\
    &= \Cov(X_{i+1}, \Delta_{i+1} h_u(X_{i+2:N})).
    \label{eq:hu-diff-cov}
  \end{align}
  The covariance on the right-hand side is nonnegative by weak positive association, because both functions are increasing and depend on disjoint coordinate sets. Therefore,
  \begin{equation}
    \E[h_u(X_{i+1}, X_{i+2:N})] \geq \E[h_u(Z_{i+1}, X_{i+2:N})].
  \end{equation}
  Taking expectation also over $u = Z_{1:i}$ gives
  \begin{equation}
    \E[h(W^{(i)})] \geq \E[h(W^{(i+1)})].
  \end{equation}
  This proves $X \succsm Z$. If $X$ is negatively associated, the covariance in \eqref{eq:hu-diff-cov} is nonpositive, and the inequalities are reversed.
\end{proof}

\subsection{Perturbation Analysis of the DPP Approximation Error}
\label{sec:derivation-KL-perturbation}

Applying the extended Pythagorean theorem to the three points $p^*$, $\nu_D$, and $\nu_{D_t}$ gives
\begin{equation}
  \KL(p^*\|\nu_{D_t}) = \KL(p^*\|\nu_D) + \KL(\nu_D\|\nu_{D_t}) + \ang*{\eta^* - \eta(\nu_D), \theta(\nu_D) - \theta(\nu_{D_t})}.
  \label{eq:KL-perturbation-pythagoras}
\end{equation}
Using $\eta^*_{(1)} = \eta_{(1)}(\nu_D)$, the inner-product term can be rewritten, in the same way as in the derivation of \eqref{eq:inner-prod-logdet}, as
\begin{equation}
  \E_{p^*}[\log\det D_Y - \log\det(D_t)_Y] - \E_{\nu_D}[\log\det D_Y - \log\det(D_t)_Y].
  \label{eq:KL-perturbation-iprod}
\end{equation}
For $t$ satisfying $t\|D_J^{-1}E_J\|_F < 1$ for every nonempty $J \subset [N]$, we have
\begin{align}
  \log\det(D_t)_J &= \log\det D_J + \tr\log(I + tD_J^{-1}E_J)\\
  &= \log\det D_J - \sum_{k \geq 1} \frac{(-1)^k}{k} t^k \tr\{(D_J^{-1}E_J)^k\}.
  \label{eq:logdet-taylor-expansion}
\end{align}
Combining \eqref{eq:KL-perturbation-pythagoras}, \eqref{eq:KL-perturbation-iprod}, and \eqref{eq:logdet-taylor-expansion} yields
\begin{align}
  \KL(p^*\|\nu_{D_t}) = &\KL(p^*\|\nu_D) + \KL(\nu_D\|\nu_{D_t})\\
  &+ \sum_{k \geq 1} \frac{(-1)^k}{k} t^k \left(\E_{p^*}[\tr\{(D_Y^{-1}E_Y)^k\}] - \E_{\nu_D}[\tr\{(D_Y^{-1}E_Y)^k\}]\right).
\end{align}
By the definition of $\Delta_{k,D}(E)$ in \eqref{eq:perturbation-coeff}, this is precisely \eqref{eq:KL-perturbation}.

We next control the second term in this expansion.
\begin{lemma}
  \label{lem:KL-perturbation-order}
  Let $D_t = D + tE$, where $E$ is symmetric with $E_{ii} = 0$, and let $t$ be sufficiently small that $D_t \succ O$. Assume that $E = G + H$ for some symmetric matrices $G$ and $H$ satisfying $\tr(D_J^{-1}H_J) = 0$ for all nonempty $J \subset [N]$. Then, in \eqref{eq:KL-perturbation}, there exists a nonnegative function $f(G) \geq 0$, depending only on $G$, such that
  \begin{equation}
    \KL(\nu_D\|\nu_{D_t}) = \frac{t^2}{2} f(G) + O(t^3).
  \end{equation}
  In particular, if $G = O$, so that $E = H$, then $f(G) = 0$ and
  \begin{equation}
    \KL(\nu_D\|\nu_{D_t}) = O(t^4).
  \end{equation}
\end{lemma}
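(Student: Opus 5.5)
We view $t\mapsto \KL(\nu_D\|\nu_{D_t})$ as a smooth function of $t$ near $0$ and Taylor expand it, using the Fisher-metric expansion of the KL divergence. Since $\nu_{D_t}$ is defined through the $1$-cut mixed coordinates $\omega_1(\nu_{D_t}) = (\eta^*_{(1)};\theta_{(2+)}(D_t))$, and $\theta_{(2+)}(D_t)$ is a smooth function of $D_t$ by \eqref{eq:dpp-theta-coordinate}, the curve $t\mapsto\nu_{D_t}$ is smooth in $\mathcal{P}$. The existence argument in Appendix~\ref{sec:proof-k-cut-pythagoras} and the implicit function theorem for the bijective gradient map of a regular exponential family give this smoothness. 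The function vanishes at $t=0$ and is nonnegative, so its first derivative vanishes. Hence
\begin{equation}
  \KL(\nu_D\|\nu_{D_t}) = \frac{t^2}{2}\, \dot\theta^\top g(\theta(\nu_D))\,\dot\theta + O(t^3),
\end{equation}
where $\dot\theta = \frac{d}{dt}\theta(\nu_{D_t})|_{t=0}$ and $g$ is the Fisher information. We define $f(G)$ as this quadratic form. It is nonnegative because $g\succeq O$.

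The key step is to show that $\dot\theta$ depends only on $G$. By \eqref{eq:dpp-theta-coordinate}, $\sum_{J\subset I,|J|\ge2}\theta^J(D_t) = \log\det (D_t)_I$. Differentiating gives the higher-order part of $\dot\theta$ as the Möbius inversion of $I\mapsto \tr(D_I^{-1}E_I) = \tr(D_I^{-1}G_I)+\tr(D_I^{-1}H_I) = \tr(D_I^{-1}G_I)$, using the hypothesis on $H$. The first-order part $\dot\theta_{(1)}$ is determined by the constraint $\eta_{(1)}(\nu_{D_t})\equiv\eta^*_{(1)}$. Differentiating $\eta_{(1)}=\nabla\psi$ gives $g_{(1)(1)}\dot\theta_{(1)} + g_{(1)(2+)}\dot\theta_{(2+)} = 0$. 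Here $g_{(1)(1)}$ is invertible since it is the covariance of the linearly independent statistics $1\{i\in Y\}$ under a strictly positive distribution. So $\dot\theta_{(1)}$ is a linear function of $\dot\theta_{(2+)}$, and the whole of $\dot\theta$ is a linear function of $G$ alone. Thus $f(G)$ is a nonnegative quadratic form in $G$, and $f(O)=0$.

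For the case $G=O$, $\dot\theta=0$, so the second-order term vanishes. We then need to rule out the $t^3$ term. The main obstacle is getting the sharper $O(t^4)$ rather than merely $O(t^3)$. The plan is to use the symmetry $t\mapsto -t$. The trace expansion \eqref{eq:logdet-taylor-expansion} shows that $\theta_{(2+)}(D+tH)$ has Taylor coefficients built from $\tr\{(D_J^{-1}H_J)^k\}$. For the genuinely off-block situation where the paper applies the lemma (block-diagonal $D$, off-block $H$), the odd-$k$ traces vanish, since an odd product of block-diagonal and off-block factors alternating is off-block. Then $\theta_{(2+)}(D+tH)$, and hence $\nu_{D+tH}$, is even in $t$. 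So $\KL(\nu_D\|\nu_{D_t})$ is even in $t$, and with vanishing $t^2$ coefficient it is $O(t^4)$.

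In the fully general hypothesis, we would instead argue directly. Write $\theta(\nu_{D_t})-\theta(\nu_D) = t^2 a + O(t^3)$, since the linear term vanishes. The KL divergence is quadratic in the $\theta$-displacement to leading order, $\tfrac12\|\Delta\theta\|_g^2 + O(\|\Delta\theta\|^3)$, so it is $O(t^4)$. This second route actually suffices in general and avoids the parity argument. I would present it as the main proof and keep the parity remark only as intuition.
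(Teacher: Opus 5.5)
Your main argument is correct and is essentially the paper's proof. In both, the first derivative of $\theta_{(2+)}(D_t)$ at $t=0$ is the M\"obius inversion of $J\mapsto\tr(D_J^{-1}G_J)$, the second-order Fisher-metric expansion of the divergence supplies $f(G)$, and for $G=O$ the parameter displacement is $O(t^2)$, so the divergence is $O(t^4)$. The only difference is the coordinates. You work in $\theta$-coordinates and must recover $\dot\theta_{(1)}$ from the marginal constraint, whereas the paper works in the $1$-cut mixed coordinates, where that component of the displacement is identically zero. Your form $\dot\theta^\top g\,\dot\theta$ reduces to $\dot\theta_{(2+)}^\top(g_{(2+)(2+)}-g_{(2+)(1)}g_{(1)(1)}^{-1}g_{(1)(2+)})\dot\theta_{(2+)}$, which is exactly the $\theta_{(2+)}$ block of the metric in mixed coordinates.

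Do not keep the parity remark even as intuition, because it holds only for a two-block partition. With three or more blocks, including $D=I$ with singleton blocks as used in Proposition~\ref{prop:local-pPC}, odd traces such as $\tr(E_Y^3)$ do not vanish. For example, with $N=3$ one gets $\theta^{\{1,2,3\}}(I+tE)=2E_{12}E_{13}E_{23}\,t^3+O(t^4)$. So $\nu_{D+tH}$ is not even in $t$, and only your $O(t^2)$-displacement argument is valid in general.
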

\begin{proof}
  For $J \subset [N]$ with $|J| = 2$, \eqref{eq:dpp-theta-coordinate} and \eqref{eq:logdet-taylor-expansion} give
  \begin{equation}
    \left.\frac{d}{dt} \theta^J(\nu_{D_t})\right|_{t = 0} = \tr(D_J^{-1}(G_J + H_J)) = \tr(D_J^{-1}G_J),
  \end{equation}
  which depends only on $G$. Suppose inductively that the same conclusion holds for all subsets of size between $2$ and $k$. For $|J| = k+1$, \eqref{eq:dpp-theta-coordinate} gives
  \begin{equation}
    \left.\frac{d}{dt} \theta^J(\nu_{D_t})\right|_{t = 0} = \tr(D_J^{-1}G_J) - \sum_{J' \subsetneq J; \medspace |J'| \geq 2} \left.\frac{d}{dt} \theta^{J'}(\nu_{D_t})\right|_{t = 0},
  \end{equation}
  and the right-hand side again depends only on $G$. Hence, for all $|J| \geq 2$, the first derivative of $\theta^J(\nu_{D_t})$ at $t = 0$ depends only on $G$. Thus, for some vector $v_G$,
  \begin{equation}
    \theta_{(2+)}(\nu_{D_t}) = \theta_{(2+)}(\nu_D) + t v_G + O(t^2).
  \end{equation}

  In the $1$-cut mixed coordinates, this means
  \begin{equation}
    \delta \coloneqq \omega_1(\nu_{D_t}) - \omega_1(\nu_D) = (0; t v_G + O(t^2)).
  \end{equation}
  Let $g_{\theta_{(2+)}\theta_{(2+)}}(\omega_1(\nu_D))$ denote the principal block of the Fisher information matrix $g(\omega_1(\nu_D))$ corresponding to the $\theta_{(2+)}$ components. The Taylor expansion of KL divergence in mixed coordinates gives
  \begin{align}
    \KL[\omega_1(\nu_D) : \omega_1(\nu_{D_t})] &= \frac{1}{2} \delta^\top g(\omega_1(\nu_D)) \delta + O(\|\delta\|^3)\\
    &= \frac{t^2}{2} v_G^\top g_{\theta_{(2+)}\theta_{(2+)}}(\omega_1(\nu_D)) v_G + O(t^3),
    \label{eq:KL-mixed-taylor-expansion}
  \end{align}
  Since $g(\omega_1(\nu_D))$ is positive definite, so is its principal block $g_{\theta_{(2+)}\theta_{(2+)}}(\omega_1(\nu_D))$. Setting
  \begin{equation}
    f(G) = v_G^\top g_{\theta_{(2+)}\theta_{(2+)}}(\omega_1(\nu_D)) v_G \geq 0
  \end{equation}
  proves the first claim. If $G = O$, then $v_G = 0$, so $\|\delta\| = O(t^2)$ and \eqref{eq:KL-mixed-taylor-expansion} gives $\KL(\nu_D\|\nu_{D_t}) = O(t^4)$.
\end{proof}

We finally bound the coefficients $\Delta_{k,D}(E)$.

\begin{lemma}
  \label{lem:trace-frobenius-ineq}
  Let $E \in \R^{n \times n}$ be symmetric. Then, for any integer $k \geq 2$,
  \begin{equation}
    |\tr E^k| \leq \|E\|_\mathrm{op}^{k-2} \|E\|_F^2.
  \end{equation}
\end{lemma}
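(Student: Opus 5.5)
The plan is to diagonalize $E$ and reduce the claim to a scalar inequality on its eigenvalues. Since $E$ is real symmetric, the spectral theorem gives real eigenvalues $\lambda_1,\dots,\lambda_n$ and an orthonormal eigenbasis. In terms of these eigenvalues,
\begin{equation}
  \tr E^k = \sum_{i=1}^n \lambda_i^k, \quad \|E\|_\mathrm{op} = \max_i |\lambda_i|, \quad \|E\|_F^2 = \tr(E^\top E) = \tr E^2 = \sum_{i=1}^n \lambda_i^2.
\end{equation}
The last identity uses symmetry of $E$, so that $E^\top E = E^2$.

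The key step is a termwise bound. For $k \geq 2$ each summand factors as $\lambda_i^k = \lambda_i^{k-2}\lambda_i^2$. Hence
\begin{equation}
  |\lambda_i^k| = |\lambda_i|^{k-2}\lambda_i^2 \leq \|E\|_\mathrm{op}^{k-2}\lambda_i^2.
\end{equation}
Applying the triangle inequality to the sum and then this bound to each term gives
\begin{equation}
  |\tr E^k| \leq \sum_i |\lambda_i|^k \leq \|E\|_\mathrm{op}^{k-2}\sum_i \lambda_i^2 = \|E\|_\mathrm{op}^{k-2}\|E\|_F^2,
\end{equation}
which is the claimed inequality.

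No step here is a real obstacle. The only points needing care are the edge cases. For $k=2$ the factor $\|E\|_\mathrm{op}^{0}$ should be read as $1$, including when $E=O$; the inequality then holds with equality. When $n=0$, as for the empty-$Y$ convention used earlier, both sides are zero.

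For the later use in bounding $\Delta_{k,D}(E)$, the matrix $D_Y^{-1}E_Y$ is not symmetric. The plan there is to apply the lemma to the symmetric similar matrix $D_Y^{-1/2}E_YD_Y^{-1/2}$, which has the same trace powers. That application belongs to the subsequent lemma, not this one.
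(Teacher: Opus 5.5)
Your proof is correct and follows essentially the same route as the paper: diagonalize the symmetric matrix $E$, write $\tr E^k=\sum_i\lambda_i^k$, and bound $\sum_i|\lambda_i|^k\le(\max_i|\lambda_i|)^{k-2}\sum_i\lambda_i^2=\|E\|_\mathrm{op}^{k-2}\|E\|_F^2$. Your side remark about applying it to $D_Y^{-1/2}E_YD_Y^{-1/2}$ also matches how the paper uses the lemma in the subsequent bound on $\Delta_{k,D}(E)$.
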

\begin{proof}
  Let $E = U \Lambda U^\top$ be a spectral decomposition, with $\Lambda = \Diag(\lambda_1, \dots, \lambda_n)$. Then
  \begin{equation}
    \tr E^k = \sum_{i=1}^{n} \lambda_i^k,
  \end{equation}
  and hence
  \begin{equation}
    |\tr E^k| \leq \sum_{i=1}^{n}|\lambda_i|^k \leq \left(\max_{1 \leq i \leq n} |\lambda_i|\right)^{k-2} \sum_{i=1}^{n} |\lambda_i|^2 = \|E\|_\mathrm{op}^{k-2} \|E\|_F^2.
  \end{equation}
\end{proof}

\begin{lemma}
  \label{lem:perturbation-coeff-ub}
  Let $D \succ O$ be a diversity matrix. Then, for every symmetric matrix $E$ satisfying $\|E\|_F = 1$ and every integer $k \geq 2$,
  \begin{equation}
    |\Delta_{k,D}(E)| \leq 2\left(\max_{\emptyset \neq J \subset[N]} \|D_J^{-1}\|_\mathrm{op}\right)^k.
  \end{equation}
\end{lemma}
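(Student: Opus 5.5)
The plan is to bound each of the two expectations in the definition of $\Delta_{k,D}(E)$ separately and then combine them by the triangle inequality:
\begin{equation}
  |\Delta_{k,D}(E)| \leq \E_{p^*}\bigl[|\tr\{(D_Y^{-1}E_Y)^k\}|\bigr] + \E_{\nu_D}\bigl[|\tr\{(D_Y^{-1}E_Y)^k\}|\bigr].
\end{equation}
Since both $p^*$ and $\nu_D$ are probability distributions, it suffices to prove the deterministic bound $|\tr\{(D_J^{-1}E_J)^k\}| \leq (\max_{\emptyset \neq J' \subset [N]}\|D_{J'}^{-1}\|_\mathrm{op})^k$ for every nonempty $J$. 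The case $J=\emptyset$ contributes zero by the stated convention. This deterministic bound then gives the factor $2$.

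For fixed nonempty $J$, the matrix $D_J^{-1}E_J$ is not symmetric, so Lemma~\ref{lem:trace-frobenius-ineq} cannot be applied directly. I will symmetrize by similarity. Since $D_J \succ O$, define $S \coloneqq D_J^{-1/2}E_JD_J^{-1/2}$. Then $D_J^{-1}E_J = D_J^{-1/2} S D_J^{1/2}$, so
\begin{equation}
  \tr\{(D_J^{-1}E_J)^k\} = \tr S^k .
\end{equation}
The matrix $S$ is symmetric, so Lemma~\ref{lem:trace-frobenius-ineq} (valid for $k\geq 2$) gives $|\tr S^k| \leq \|S\|_\mathrm{op}^{k-2}\|S\|_F^2$.

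It remains to estimate the two norms of $S$. I will use submultiplicativity together with $\|AB\|_F \leq \|A\|_\mathrm{op}\|B\|_F$. This gives
\begin{equation}
  \|S\|_\mathrm{op} \leq \|D_J^{-1}\|_\mathrm{op}\|E_J\|_\mathrm{op}, \qquad \|S\|_F \leq \|D_J^{-1}\|_\mathrm{op}\|E_J\|_F .
\end{equation}
Here $\|D_J^{-1/2}\|_\mathrm{op}^2 = \|D_J^{-1}\|_\mathrm{op}$. Because $E_J$ is a principal submatrix of $E$, we have $\|E_J\|_\mathrm{op} \leq \|E_J\|_F \leq \|E\|_F = 1$. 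Hence $|\tr S^k| \leq \|D_J^{-1}\|_\mathrm{op}^{k}$, and taking the maximum over nonempty $J$ finishes the proof.

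There is no real obstacle in this argument. The one point needing care is the non-symmetry of $D_J^{-1}E_J$, and the similarity transform handles it. One also has to check that each factor $\|D_J^{-1}\|_\mathrm{op}$ is dominated by the maximum over $J$, which is immediate. As a consistency check, Cauchy interlacing gives $\|D_J^{-1}\|_\mathrm{op} \leq \|D^{-1}\|_\mathrm{op}$, so the maximum equals $1/\lambda_{\min}(D)$. This matches how the bound is used in Proposition~\ref{prop:characterization-improvement}.
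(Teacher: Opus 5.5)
Your proof is correct and follows the paper's argument essentially step for step. It uses the same similarity transform to $D_J^{-1/2}E_J D_J^{-1/2}$, the same application of Lemma~\ref{lem:trace-frobenius-ineq}, and the same operator and Frobenius norm estimates, giving $|\tr\{(D_J^{-1}E_J)^k\}| \leq \|D_J^{-1}\|_\mathrm{op}^k$. The only cosmetic difference is the final step: the paper writes $\Delta_{k,D}(E)$ as $\sum_J (p^*(J)-\nu_D(J))\tr\{(D_J^{-1}E_J)^k\}$ and uses $\|p^*-\nu_D\|_1 \leq 2$, whereas you bound the two expectations separately with the triangle inequality, which yields the same factor $2$.
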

\begin{proof}
  For every nonempty $J \subset [N]$,
  \begin{equation}
    \tr\{(D_J^{-1}E_J)^k\} = \tr\{(D_J^{-1/2} E_J D_J^{-1/2})^k\}.
  \end{equation}
  Setting $A_J = D_J^{-1/2} E_J D_J^{-1/2}$, Lemma~\ref{lem:trace-frobenius-ineq} yields
  \begin{equation}
    |\tr\{(D_J^{-1} E_J)^k\}| \leq \|A_J\|_\mathrm{op}^{k-2} \|A_J\|_F^2.
  \end{equation}
  Moreover,
  \begin{equation}
      \|A_J\|_\mathrm{op} \leq \|D_J^{-1}\|_\mathrm{op} \|E_J\|_\mathrm{op} \leq \|D_J^{-1}\|_\mathrm{op},
  \end{equation}
  and
  \begin{equation}
      \|A_J\|_F \leq \|D_J^{-1}\|_\mathrm{op} \|E_J\|_F \leq \|D_J^{-1}\|_\mathrm{op}.
  \end{equation}
  Therefore,
  \begin{equation}
      |\tr\{(D_J^{-1} E_J)^k\}| \leq \|D_J^{-1}\|_\mathrm{op}^k.
  \end{equation}
  It follows that
  \begin{align}
    |\Delta_{k,D}(E)| &\leq \sum_{\emptyset \neq J \subset [N]} |p^*(J) - \nu_D(J)| |\tr\{(D_J^{-1} E_J)^k\}|\\
    &\leq \|p^* - \nu_D\|_1 \max_{\emptyset \neq J \subset[N]} \|D_J^{-1}\|_\mathrm{op}^{k}\\
    &\leq 2\max_{\emptyset \neq  J \subset [N]} \|D_J^{-1}\|_\mathrm{op}^{k}.
  \end{align}
\end{proof}

\section{Details for the Three-Element Model}
\subsection{Characterization of the Dependence Conditions}
\label{sec:dependence-conditions-n3}

Among the characterizations of the four dependence properties listed in Table~\ref{tb:cond-p-dependence}, those for the positive lattice condition~\eqref{eq:PLC} and pairwise positive correlation~\eqref{eq:p-PC} follow by direct calculation. We derive the remaining two characterizations below. Throughout this subsection, we use the binary random-vector notation introduced in Section~\ref{sec:global-optimality}.

We first give the following characterization of weak positive association in terms of binary-valued increasing functions. It is obtained by restricting the argument for positive association in \cite[Theorem~3.1]{esary-1967-association} to disjoint coordinate sets; the proof is essentially the same.
\begin{lemma}
  \label{lem:wPA-condition}
  The random vector $X$ is weakly positively associated in the sense of \eqref{eq:vector-wPA} if and only if, for every pair of disjoint sets $I,J \subset [N]$ and every pair of binary-valued increasing functions $\gamma$ and $\delta$ of $(x_i)_{i \in I}$ and $(x_j)_{j \in J}$, respectively,
  \begin{equation}
    \Cov(\gamma((X_i)_{i \in I}), \delta((X_j)_{j \in J})) \geq 0,
    \label{eq:wPA-condition-assumption}
  \end{equation}
  holds.
\end{lemma}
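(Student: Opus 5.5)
The direction ``wPA $\Rightarrow$ binary condition'' is immediate, because binary-valued increasing functions are a special case of increasing functions in \eqref{eq:vector-wPA}. The work is in the converse, and I would follow Esary--Proschan--Walkup: decompose an arbitrary increasing function on $\{0,1\}^{I}$ into a nonnegative combination of binary increasing functions, and then use bilinearity of covariance.

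Fix disjoint $I,J$ and increasing $f$ of $(x_i)_{i\in I}$ and $g$ of $(x_j)_{j\in J}$. Since $X$ takes only finitely many values, $f$ takes finitely many values $a_0<a_1<\dots<a_m$ on $\{0,1\}^I$. I would write the layer-cake representation
\begin{equation}
  f(x) = a_0 + \sum_{r=1}^{m} (a_r - a_{r-1})\, 1\{f(x) \geq a_r\}.
\end{equation}
Each indicator $\gamma_r(x) \coloneqq 1\{f(x)\geq a_r\}$ is binary-valued. It is also increasing: if $x \preceq x'$ and $f(x)\geq a_r$, then $f(x')\geq f(x)\geq a_r$. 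The coefficients $a_r - a_{r-1}$ are positive. Similarly,
\begin{equation}
  g = b_0 + \sum_{s=1}^{n} (b_s - b_{s-1})\,\delta_s
\end{equation}
with binary increasing $\delta_s$ and positive coefficients.

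Constants do not contribute to covariance, so bilinearity gives
\begin{equation}
  \Cov(f,g) = \sum_{r,s} (a_r-a_{r-1})(b_s-b_{s-1})\, \Cov(\gamma_r((X_i)_{i\in I}), \delta_s((X_j)_{j\in J})).
\end{equation}
Each term is nonnegative by the assumption \eqref{eq:wPA-condition-assumption}, and therefore $\Cov(f,g)\geq 0$. This is exactly \eqref{eq:vector-wPA}.

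I do not expect a real obstacle here. The only point needing care is checking that the level-set indicators are increasing and depend only on the coordinates in $I$ (respectively $J$), which preserves the disjointness requirement. The finiteness of the range, automatic on $\{0,1\}^N$, avoids any limiting argument.
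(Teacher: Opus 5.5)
Your proof is correct and takes essentially the paper's route: the paper cites the Esary--Proschan--Walkup identity $\Cov(f,g)=\iint \Cov(1\{f>s\},1\{g>t\})\,ds\,dt$, treats the level-set indicators as binary increasing functions, and for finitely-valued $f,g$ this integral reduces exactly to your finite layer-cake sum with positive weights $(a_r-a_{r-1})(b_s-b_{s-1})$. Your discrete version is self-contained, since it needs only bilinearity of covariance rather than the cited integral identity.
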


\begin{proof}
  Necessity follows immediately from the definition~\eqref{eq:vector-wPA}. For sufficiency, let $I,J \subset [N]$ be disjoint, and let $f$ and $g$ be arbitrary increasing functions of $(x_i)_{i \in I}$ and $(x_j)_{j \in J}$, respectively. By the identity in \cite[(3.1)]{esary-1967-association},
  \begin{align}
    &\Cov(f((X_i)_{i \in I}), g((X_j)_{j \in J}))\\
    & \quad = \int_{-\infty}^{\infty}  \int_{-\infty}^{\infty} \Cov(1\{f((X_i)_{i \in I}) > s\}, 1\{g((X_j)_{j \in J}) > t\}) ds \medspace  dt. 
    \label{eq:cov-identity}
  \end{align}
  For each $s,t \in \R$, define
  \begin{equation}
    \gamma_s((x_i)_{i \in I}) \coloneqq 1\{f((x_i)_{i \in I}) > s\}, \quad \delta_t((x_j)_{j \in J}) \coloneqq 1\{g((x_j)_{j \in J}) > t\}.
  \end{equation}
  Both are binary-valued increasing functions. Therefore, the assumption~\eqref{eq:wPA-condition-assumption} gives
  \begin{equation}
    \Cov(\gamma_s((X_i)_{i \in I}), \delta_t((X_j)_{j \in J})) \geq 0.
  \end{equation}
  Hence, the right-hand side of \eqref{eq:cov-identity} is nonnegative, proving that $X$ is weakly positively associated.
\end{proof}

For $N = 3$, let $X$ be the binary random vector associated with $Y \sim p_{\alpha,\beta}$ given by \eqref{eq:curie-weiss}. By exchangeability, up to relabeling and interchanging $I$ and $J$, the only disjoint pairs $(I,J)$ that need to be considered in Lemma~\ref{lem:wPA-condition} are $(\{1\},\{2\})$ and $(\{1\},\{2,3\})$. Apart from the constant functions, the only binary-valued increasing function of $x_1$ is $x_1$ itself. Similarly, the nonconstant binary-valued increasing functions of $(x_2,x_3)$ are $x_2$, $x_3$, $x_2x_3$, and $x_2 \lor x_3$. Therefore, by exchangeability, $X$ is weakly positively associated if and only if all three quantities
\begin{equation}
  \Cov(X_1,X_2), \quad \Cov(X_1, X_2 X_3), \quad \Cov(X_1, X_2 \lor X_3)
\end{equation}
are nonnegative.
Set
\begin{equation}
  r \coloneqq e^\alpha, \quad s \coloneqq e^\beta, \quad Z \coloneqq e^{\psi(\alpha,\beta)} = 4 + 3r + s.
\end{equation}
A direct calculation then gives
\begin{align}
  Z^2 \Cov(X_1,X_2) &= 2s - r^2 - 1,\\
  Z^2 \Cov(X_1,X_2 X_3) &= 3s - 2r^2 - r,\\
  Z^2 \Cov(X_1,X_2 \lor X_3) &= s + r - 2.
\end{align}
Thus, weak positive association is equivalent to
\begin{equation}
  s \geq \max\left\{\frac{r^2 + 1}{2}, \frac{2r^2 + r}{3}, 2-r \right\}.
\end{equation}
The maximum on the right-hand side equals $2-r$ for $0<r\leq1$ and $(2r^2+r)/3$ for $r\geq1$, yielding the condition stated in Table~\ref{tb:cond-p-dependence}.

We next characterize the condition $p_{\alpha,\beta} \succism \nu_I$. Since $p_{\alpha,\beta}$ and $\nu_I$ have the same first-order marginals $\eta^*_{(1)}$, Remark~\ref{rem:ism-sm-equivalence} gives
\begin{equation}
  p_{\alpha,\beta} \succism \nu_I \iff p_{\alpha,\beta} \succsm \nu_I.
\end{equation}
Since both $p_{\alpha,\beta}$ and $\nu_I$ are exchangeable, \cite[Proposition~5(a) and Eq.~(15)]{meyer-2012-increasing} implies that
\begin{equation}
  p_{\alpha,\beta} \succsm \nu_I \iff \E_{p_{\alpha,\beta}}[\max\{|Y| - k, 0\}] \geq \E_{\nu_I}[\max\{|Y| - k, 0\}] \quad (k = 1,2).
  \label{eq:sm-equivalent-condition}
\end{equation}
We have
\begin{align}
  \max\{|Y| - 1, 0\} &= X_1X_2 + X_2X_3 + X_1X_3 - X_1X_2X_3, \\
  \max\{|Y| - 2, 0\} &= X_1X_2X_3,
\end{align}
and, for $i \neq j$,
\begin{equation}
  \E_{p_{\alpha,\beta}}[X_i X_j] = \frac{r + s}{Z}, \quad \E_{\nu_I}[X_i X_j] = \left(\frac{1 + 2r + s}{Z}\right)^2,
\end{equation}
as well as
\begin{equation}
  \E_{p_{\alpha,\beta}}[X_1X_2X_3] = \frac{s}{Z}, \quad \E_{\nu_I}[X_1X_2X_3] = \left(\frac{1 + 2r + s}{Z}\right)^3.
\end{equation}
Substituting these expressions into \eqref{eq:sm-equivalent-condition}, the condition for $k=1$ reduces to
\begin{equation}
  s \geq (r + 3)^{3/2} - 3r - 4,
\end{equation}
whereas the condition for $k=2$ reduces to
\begin{equation}
  s \geq \frac{3r^2 - 12r - 13 + \sqrt{(r + 3)^3(9r + 7)}}{10}.
\end{equation}
The first lower bound is larger for $0 < r \leq 1$, while the second is larger for $r \geq 1$, yielding the condition stated in Table~\ref{tb:cond-p-dependence}.

\subsection{Certified Bounds for the DPP Approximation Error}
\label{sec:numerical-bound-computation}

In general, computing $\KL(p^*\|\DPP)$ is a nonconvex optimization problem and is difficult to solve exactly. Here we use a simple branch-and-bound procedure to compute an approximate value together with lower and upper bounds. By Theorem~\ref{thm:DPP-error-dependence}, the diagonal entries of the marginal kernel $K$ may be fixed at
\begin{equation}
  \mu \coloneqq \eta_{\{1\}}(p_{\alpha,\beta}),
\end{equation}
so it remains to optimize over the three off-diagonal entries under the constraint $O \prec K \prec I$. We write
\begin{equation}
  K = \begin{pmatrix}
    \mu & a & b\\
    a & \mu & c\\
    b & c & \mu
  \end{pmatrix}.
\end{equation}
The corresponding DPP probability mass function $p_K$ is given, for example, by
\begin{align}
  p_K(\{1,2,3\}) &= \mu^3 - \mu(a^2 + b^2 + c^2) + 2abc, \label{eq:pK-123}\\
  p_K(\{1,2\}) &= \mu^2 - a^2 - p_K(\{1,2,3\}), \label{eq:pK-12}\\
  p_K(\{1\}) &= \mu - 2\mu^2 + a^2 + b^2 + p_K(\{1,2,3\}), \label{eq:pK-1}\\
  p_K(\emptyset) &= (1-\mu)^3 - (1-\mu)(a^2 + b^2 + c^2) - 2abc. \label{eq:pK-empty}
\end{align}
The remaining probabilities are obtained by permuting the indices.

The feasible region for $(a,b,c)$ is divided into rectangular boxes. On each box, we compute an upper bound $\overline{p}(A)$ for each probability $p_K(A)$. Since the probabilities in \eqref{eq:pK-123}--\eqref{eq:pK-empty} are polynomials in $(a,b,c)$, these bounds can be computed explicitly on each box. For any $K$ in the box, we then have
\begin{equation}
  \ell_\mathrm{pmf} \coloneqq \sum_{A \subset \{1,2,3\}} p^*(A) \log \frac{p^*(A)}{\overline{p}(A)} \leq \KL(p^*\|p_K).
  \label{eq:pmf-lower-bound}
\end{equation}

A complementary lower bound is obtained from the cardinality distribution. Let $\rho^*$ and $\rho_K$ denote the distributions of $|Y|$ under $p^*$ and $p_K$, respectively. By the data processing inequality applied to the map $A \mapsto |A|$,
\begin{equation}
    \KL(\rho^*\|\rho_K) \leq \KL(p^*\|p_K).
    \label{eq:card-lower-bound}
\end{equation}
Set $u \coloneqq a^2 + b^2 + c^2$ and $v \coloneqq abc$. Then
\begin{align}
  \rho_K (0) &= (1-\mu)^3 - (1-\mu)u - 2v, \\
  \rho_K (1) &= 3\mu(1-\mu)^2 + (2 - 3\mu)u + 6v, \\
  \rho_K (2) &= 3\mu^2(1-\mu) + (3\mu - 1)u - 6v, \\
  \rho_K (3) &= \mu^3 - \mu u + 2v.
\end{align}
Thus, each $\rho_K (m)$, $m = 0,1,2,3$, is an affine function of $(u,v)$. Together with the convexity of $-\log$, this implies that $\KL(\rho^*\|\rho_K)$ is jointly convex in $(u,v)$. Consequently, the supporting hyperplane of this convex function at $(u,v) = (0,0)$ yields a uniform lower bound, denoted by $\ell_\mathrm{card}$, on $\KL(\rho^*\|\rho_K)$ over the box, and hence also on $\KL(p^*\|p_K)$. For each box, we take $\ell \coloneqq \max\{\ell_\mathrm{pmf}, \ell_\mathrm{card}\}$ as the lower bound on $\KL(p^*\|p_K)$ over the box. Taking the minimum of $\ell$ over all boxes gives a certified lower bound on $\KL(p^*\|\DPP)$, and hence an upper bound on $\Delta$.

Conversely, for any feasible kernel $K$, the value $\KL(p^*\|p_K)$ directly gives an upper bound on $\KL(p^*\|\DPP)$:
\begin{equation}
  \KL(p^*\|\DPP) \leq \KL(p^*\|p_K).
\end{equation}
The smallest such value found during the branch-and-bound procedure therefore yields the corresponding lower bound on $\Delta$.

\end{appendices}


\bmhead{Acknowledgements}
Part of this work was supported by JSPS KAKENHI No.~JP26K02989, JP25H01494, and JP26K23861.

\bmhead{Data Availability}
No external datasets were used in this study.

\section*{Declarations}
\bmhead{Conflict of interest}
The authors declare no conflict of interest.


\begin{thebibliography}{39}
\ifx \bisbn   \undefined \def \bisbn  #1{ISBN #1}\fi
\ifx \binits  \undefined \def \binits#1{#1}\fi
\ifx \bauthor  \undefined \def \bauthor#1{#1}\fi
\ifx \batitle  \undefined \def \batitle#1{#1}\fi
\ifx \bjtitle  \undefined \def \bjtitle#1{#1}\fi
\ifx \bvolume  \undefined \def \bvolume#1{\textbf{#1}}\fi
\ifx \byear  \undefined \def \byear#1{#1}\fi
\ifx \bissue  \undefined \def \bissue#1{#1}\fi
\ifx \bfpage  \undefined \def \bfpage#1{#1}\fi
\ifx \blpage  \undefined \def \blpage #1{#1}\fi
\ifx \burl  \undefined \def \burl#1{\textsf{#1}}\fi
\ifx \doiurl  \undefined \def \doiurl#1{\url{https://doi.org/#1}}\fi
\ifx \betal  \undefined \def \betal{\textit{et al.}}\fi
\ifx \binstitute  \undefined \def \binstitute#1{#1}\fi
\ifx \binstitutionaled  \undefined \def \binstitutionaled#1{#1}\fi
\ifx \bctitle  \undefined \def \bctitle#1{#1}\fi
\ifx \beditor  \undefined \def \beditor#1{#1}\fi
\ifx \bpublisher  \undefined \def \bpublisher#1{#1}\fi
\ifx \bbtitle  \undefined \def \bbtitle#1{#1}\fi
\ifx \bedition  \undefined \def \bedition#1{#1}\fi
\ifx \bseriesno  \undefined \def \bseriesno#1{#1}\fi
\ifx \blocation  \undefined \def \blocation#1{#1}\fi
\ifx \bsertitle  \undefined \def \bsertitle#1{#1}\fi
\ifx \bsnm \undefined \def \bsnm#1{#1}\fi
\ifx \bsuffix \undefined \def \bsuffix#1{#1}\fi
\ifx \bparticle \undefined \def \bparticle#1{#1}\fi
\ifx \barticle \undefined \def \barticle#1{#1}\fi
\bibcommenthead
\ifx \bconfdate \undefined \def \bconfdate #1{#1}\fi
\ifx \botherref \undefined \def \botherref #1{#1}\fi
\ifx \url \undefined \def \url#1{\textsf{#1}}\fi
\ifx \bchapter \undefined \def \bchapter#1{#1}\fi
\ifx \bbook \undefined \def \bbook#1{#1}\fi
\ifx \bcomment \undefined \def \bcomment#1{#1}\fi
\ifx \oauthor \undefined \def \oauthor#1{#1}\fi
\ifx \citeauthoryear \undefined \def \citeauthoryear#1{#1}\fi
\ifx \endbibitem  \undefined \def \endbibitem {}\fi
\ifx \bconflocation  \undefined \def \bconflocation#1{#1}\fi
\ifx \arxivurl  \undefined \def \arxivurl#1{\textsf{#1}}\fi
\csname PreBibitemsHook\endcsname

\bibitem[\protect\citeauthoryear{Kulesza and Taskar}{2012}]{kulesza-2012-determinantal}
\begin{bbook}
\bauthor{\bsnm{Kulesza}, \binits{A.}},
\bauthor{\bsnm{Taskar}, \binits{B.}}:
\bbtitle{Determinantal Point Processes for Machine Learning}.
\bsertitle{Foundations and Trends in Machine Learning},
vol. \bseriesno{5}.
\bpublisher{Now Publishers Inc.},
\blocation{Hanover, MA, USA}
(\byear{2012}).
\doiurl{10.1561/2200000044}
\end{bbook}
\endbibitem

\bibitem[\protect\citeauthoryear{Macchi}{1975}]{macchi-1975-coincidence}
\begin{barticle}
\bauthor{\bsnm{Macchi}, \binits{O.}}:
\batitle{The coincidence approach to stochastic point processes}.
\bjtitle{Advances in Applied Probability}
\bvolume{7}(\bissue{1}),
\bfpage{83}--\blpage{122}
(\byear{1975})
\doiurl{10.2307/1425855}
\end{barticle}
\endbibitem

\bibitem[\protect\citeauthoryear{Kulesza and Taskar}{2011}]{kulesza-2011-learning}
\begin{bchapter}
\bauthor{\bsnm{Kulesza}, \binits{A.}},
\bauthor{\bsnm{Taskar}, \binits{B.}}:
\bctitle{Learning determinantal point processes}.
In: \beditor{\bsnm{Cozman}, \binits{F.G.}},
\beditor{\bsnm{Pfeffer}, \binits{A.}} (eds.)
\bbtitle{Proceedings of the Twenty-Seventh Conference on Uncertainty in Artificial Intelligence},
pp. \bfpage{419}--\blpage{427}.
\bpublisher{AUAI Press},
\blocation{Corvallis, OR, USA}
(\byear{2011})
\end{bchapter}
\endbibitem

\bibitem[\protect\citeauthoryear{Gartrell et~al.}{2016}]{gartrell-2016-bayesian}
\begin{bchapter}
\bauthor{\bsnm{Gartrell}, \binits{M.}},
\bauthor{\bsnm{Paquet}, \binits{U.}},
\bauthor{\bsnm{Koenigstein}, \binits{N.}}:
\bctitle{Bayesian low-rank determinantal point processes}.
In: \bbtitle{Proceedings of the 10th ACM Conference on Recommender Systems},
pp. \bfpage{349}--\blpage{356}.
\bpublisher{Association for Computing Machinery},
\blocation{New York, NY, USA}
(\byear{2016}).
\doiurl{10.1145/2959100.2959178}
\end{bchapter}
\endbibitem

\bibitem[\protect\citeauthoryear{Kojima and Komaki}{2016}]{kojima-2016-determinantal}
\begin{barticle}
\bauthor{\bsnm{Kojima}, \binits{M.}},
\bauthor{\bsnm{Komaki}, \binits{F.}}:
\batitle{Determinantal point process priors for {Bayesian} variable selection in linear regression}.
\bjtitle{Statistica Sinica}
\bvolume{26}(\bissue{1}),
\bfpage{97}--\blpage{117}
(\byear{2016})
\doiurl{10.5705/ss.202014.0161}
\end{barticle}
\endbibitem

\bibitem[\protect\citeauthoryear{Snoek et~al.}{2013}]{snoek-2013-determinantal}
\begin{bchapter}
\bauthor{\bsnm{Snoek}, \binits{J.}},
\bauthor{\bsnm{Zemel}, \binits{R.S.}},
\bauthor{\bsnm{Adams}, \binits{R.P.}}:
\bctitle{A determinantal point process latent variable model for inhibition in neural spiking data}.
In: \bbtitle{Advances in Neural Information Processing Systems},
vol. \bseriesno{26},
pp. \bfpage{1932}--\blpage{1940}.
\bpublisher{Curran Associates, Inc.},
\blocation{Red Hook, NY, USA}
(\byear{2013})
\end{bchapter}
\endbibitem

\bibitem[\protect\citeauthoryear{Batmanghelich et~al.}{2014}]{batmanghelich-2014-diversifying}
\begin{botherref}
\oauthor{\bsnm{Batmanghelich}, \binits{N.K.}},
\oauthor{\bsnm{Quon}, \binits{G.}},
\oauthor{\bsnm{Kulesza}, \binits{A.}},
\oauthor{\bsnm{Kellis}, \binits{M.}},
\oauthor{\bsnm{Golland}, \binits{P.}},
\oauthor{\bsnm{Bornn}, \binits{L.}}:
Diversifying Sparsity Using Variational Determinantal Point Processes
(2014).
\doiurl{10.48550/arXiv.1411.6307}
\end{botherref}
\endbibitem

\bibitem[\protect\citeauthoryear{Gillenwater et~al.}{2014}]{gillenwater-2014-expectation}
\begin{bchapter}
\bauthor{\bsnm{Gillenwater}, \binits{J.A.}},
\bauthor{\bsnm{Kulesza}, \binits{A.}},
\bauthor{\bsnm{Fox}, \binits{E.B.}},
\bauthor{\bsnm{Taskar}, \binits{B.}}:
\bctitle{Expectation-maximization for learning determinantal point processes}.
In: \bbtitle{Advances in Neural Information Processing Systems},
vol. \bseriesno{27},
pp. \bfpage{3149}--\blpage{3157}.
\bpublisher{Curran Associates, Inc.},
\blocation{Red Hook, NY, USA}
(\byear{2014})
\end{bchapter}
\endbibitem

\bibitem[\protect\citeauthoryear{Mariet and Sra}{2015}]{mariet-2015-fixed-point}
\begin{bchapter}
\bauthor{\bsnm{Mariet}, \binits{Z.}},
\bauthor{\bsnm{Sra}, \binits{S.}}:
\bctitle{Fixed-point algorithms for learning determinantal point processes}.
In: \beditor{\bsnm{Bach}, \binits{F.}},
\beditor{\bsnm{Blei}, \binits{D.}} (eds.)
\bbtitle{Proceedings of the 32nd International Conference on Machine Learning}.
\bsertitle{Proceedings of Machine Learning Research},
vol. \bseriesno{37},
pp. \bfpage{2389}--\blpage{2397}.
\bpublisher{PMLR},
\blocation{Lille, France}
(\byear{2015})
\end{bchapter}
\endbibitem

\bibitem[\protect\citeauthoryear{Kawashima and Hino}{2023}]{kawashima-2023-minorization}
\begin{botherref}
\oauthor{\bsnm{Kawashima}, \binits{T.}},
\oauthor{\bsnm{Hino}, \binits{H.}}:
Minorization-maximization for learning determinantal point processes.
Transactions on Machine Learning Research
(2023)
\end{botherref}
\endbibitem

\bibitem[\protect\citeauthoryear{Castella and Pesquet}{2026}]{castella-2026-kernel}
\begin{botherref}
\oauthor{\bsnm{Castella}, \binits{M.}},
\oauthor{\bsnm{Pesquet}, \binits{J.-C.}}:
Kernel matrix estimation of a determinantal point process from a finite set of samples: Properties and algorithms.
Transactions on Machine Learning Research
(2026)
\end{botherref}
\endbibitem

\bibitem[\protect\citeauthoryear{Affandi et~al.}{2014}]{affandi-2014-learning}
\begin{bchapter}
\bauthor{\bsnm{Affandi}, \binits{R.H.}},
\bauthor{\bsnm{Fox}, \binits{E.}},
\bauthor{\bsnm{Adams}, \binits{R.}},
\bauthor{\bsnm{Taskar}, \binits{B.}}:
\bctitle{Learning the parameters of determinantal point process kernels}.
In: \beditor{\bsnm{Xing}, \binits{E.P.}},
\beditor{\bsnm{Jebara}, \binits{T.}} (eds.)
\bbtitle{Proceedings of the 31st International Conference on Machine Learning}.
\bsertitle{Proceedings of Machine Learning Research},
vol. \bseriesno{32},
pp. \bfpage{1224}--\blpage{1232}.
\bpublisher{PMLR},
\blocation{Beijing, China}
(\byear{2014})
\end{bchapter}
\endbibitem

\bibitem[\protect\citeauthoryear{Urschel et~al.}{2017}]{urschel-2017-learning}
\begin{bchapter}
\bauthor{\bsnm{Urschel}, \binits{J.}},
\bauthor{\bsnm{Brunel}, \binits{V.-E.}},
\bauthor{\bsnm{Moitra}, \binits{A.}},
\bauthor{\bsnm{Rigollet}, \binits{P.}}:
\bctitle{Learning determinantal point processes with moments and cycles}.
In: \beditor{\bsnm{Precup}, \binits{D.}},
\beditor{\bsnm{Teh}, \binits{Y.W.}} (eds.)
\bbtitle{Proceedings of the 34th International Conference on Machine Learning}.
\bsertitle{Proceedings of Machine Learning Research},
vol. \bseriesno{70},
pp. \bfpage{3511}--\blpage{3520}.
\bpublisher{PMLR},
\blocation{Sydney, Australia}
(\byear{2017})
\end{bchapter}
\endbibitem

\bibitem[\protect\citeauthoryear{Gouri{\'e}roux and Lu}{2025}]{gourieroux-2025-simple}
\begin{botherref}
\oauthor{\bsnm{Gouri{\'e}roux}, \binits{C.}},
\oauthor{\bsnm{Lu}, \binits{Y.}}:
A Simple Estimator of the Correlation Kernel Matrix of a Determinantal Point Process
(2025).
\doiurl{10.48550/arXiv.2505.14529}
\end{botherref}
\endbibitem

\bibitem[\protect\citeauthoryear{Mariet and Sra}{2016}]{mariet-2016-kronecker}
\begin{bchapter}
\bauthor{\bsnm{Mariet}, \binits{Z.E.}},
\bauthor{\bsnm{Sra}, \binits{S.}}:
\bctitle{{Kronecker} determinantal point processes}.
In: \bbtitle{Advances in Neural Information Processing Systems},
vol. \bseriesno{29},
pp. \bfpage{2694}--\blpage{2702}.
\bpublisher{Curran Associates, Inc.},
\blocation{Red Hook, NY, USA}
(\byear{2016})
\end{bchapter}
\endbibitem

\bibitem[\protect\citeauthoryear{Gartrell et~al.}{2017}]{gartrell-2017-lowrank}
\begin{bchapter}
\bauthor{\bsnm{Gartrell}, \binits{M.}},
\bauthor{\bsnm{Paquet}, \binits{U.}},
\bauthor{\bsnm{Koenigstein}, \binits{N.}}:
\bctitle{Low-rank factorization of determinantal point processes}.
In: \bbtitle{Proceedings of the Thirty-First AAAI Conference on Artificial Intelligence},
vol. \bseriesno{31},
pp. \bfpage{1912}--\blpage{1918}
(\byear{2017}).
\doiurl{10.1609/aaai.v31i1.10869}
\end{bchapter}
\endbibitem

\bibitem[\protect\citeauthoryear{Hough et~al.}{2006}]{hough-2006-determinantal}
\begin{barticle}
\bauthor{\bsnm{Hough}, \binits{J.B.}},
\bauthor{\bsnm{Krishnapur}, \binits{M.}},
\bauthor{\bsnm{Peres}, \binits{Y.}},
\bauthor{\bsnm{Vir{\'a}g}, \binits{B.}}:
\batitle{Determinantal processes and independence}.
\bjtitle{Probability Surveys}
\bvolume{3},
\bfpage{206}--\blpage{229}
(\byear{2006})
\doiurl{10.1214/154957806000000078}
\end{barticle}
\endbibitem

\bibitem[\protect\citeauthoryear{Anari et~al.}{2016}]{anari-2016-mcmc}
\begin{bchapter}
\bauthor{\bsnm{Anari}, \binits{N.}},
\bauthor{\bsnm{Oveis~Gharan}, \binits{S.}},
\bauthor{\bsnm{Rezaei}, \binits{A.}}:
\bctitle{{Monte Carlo Markov Chain} algorithms for sampling {Strongly Rayleigh} distributions and determinantal point processes}.
In: \beditor{\bsnm{Feldman}, \binits{V.}},
\beditor{\bsnm{Rakhlin}, \binits{A.}},
\beditor{\bsnm{Shamir}, \binits{O.}} (eds.)
\bbtitle{29th Annual Conference on Learning Theory}.
\bsertitle{Proceedings of Machine Learning Research},
vol. \bseriesno{49},
pp. \bfpage{103}--\blpage{115}.
\bpublisher{PMLR},
\blocation{Columbia University, New York, New York, USA}
(\byear{2016})
\end{bchapter}
\endbibitem

\bibitem[\protect\citeauthoryear{Derezi{\'n}ski et~al.}{2019}]{derezinski-2019-exact}
\begin{bchapter}
\bauthor{\bsnm{Derezi{\'n}ski}, \binits{M.}},
\bauthor{\bsnm{Calandriello}, \binits{D.}},
\bauthor{\bsnm{Valko}, \binits{M.}}:
\bctitle{Exact sampling of determinantal point processes with sublinear time preprocessing}.
In: \bbtitle{Advances in Neural Information Processing Systems},
vol. \bseriesno{32}.
\bpublisher{Curran Associates, Inc.},
\blocation{Red Hook, NY, USA}
(\byear{2019})
\end{bchapter}
\endbibitem

\bibitem[\protect\citeauthoryear{Borcea et~al.}{2009}]{borcea-2009-negative}
\begin{barticle}
\bauthor{\bsnm{Borcea}, \binits{J.}},
\bauthor{\bsnm{Br{\"a}nd{\'e}n}, \binits{P.}},
\bauthor{\bsnm{Liggett}, \binits{T.M.}}:
\batitle{Negative dependence and the geometry of polynomials}.
\bjtitle{Journal of the American Mathematical Society}
\bvolume{22}(\bissue{2}),
\bfpage{521}--\blpage{567}
(\byear{2009})
\doiurl{10.1090/S0894-0347-08-00618-8}
\end{barticle}
\endbibitem

\bibitem[\protect\citeauthoryear{Brunel et~al.}{2017}]{brunel-2017-maximum}
\begin{botherref}
\oauthor{\bsnm{Brunel}, \binits{V.-E.}},
\oauthor{\bsnm{Moitra}, \binits{A.}},
\oauthor{\bsnm{Rigollet}, \binits{P.}},
\oauthor{\bsnm{Urschel}, \binits{J.}}:
Maximum Likelihood Estimation of Determinantal Point Processes
(2017).
\doiurl{10.48550/arXiv.1701.06501}
\end{botherref}
\endbibitem

\bibitem[\protect\citeauthoryear{Amari}{2001}]{amari-2001-information}
\begin{barticle}
\bauthor{\bsnm{Amari}, \binits{S.}}:
\batitle{Information geometry on hierarchy of probability distributions}.
\bjtitle{IEEE Transactions on Information Theory}
\bvolume{47}(\bissue{5}),
\bfpage{1701}--\blpage{1711}
(\byear{2001})
\doiurl{10.1109/18.930911}
\end{barticle}
\endbibitem

\bibitem[\protect\citeauthoryear{Mont{\'u}far and Rauh}{2014}]{montufar-2012-scaling}
\begin{barticle}
\bauthor{\bsnm{Mont{\'u}far}, \binits{G.F.}},
\bauthor{\bsnm{Rauh}, \binits{J.}}:
\batitle{Scaling of model approximation errors and expected entropy distances}.
\bjtitle{Kybernetika}
\bvolume{50}(\bissue{2}),
\bfpage{234}--\blpage{245}
(\byear{2014})
\doiurl{10.14736/kyb-2014-2-0234}
\end{barticle}
\endbibitem

\bibitem[\protect\citeauthoryear{Gatmiry et~al.}{2020}]{gatmiry-2020-testing}
\begin{bchapter}
\bauthor{\bsnm{Gatmiry}, \binits{K.}},
\bauthor{\bsnm{Aliakbarpour}, \binits{M.}},
\bauthor{\bsnm{Jegelka}, \binits{S.}}:
\bctitle{Testing determinantal point processes}.
In: \bbtitle{Advances in Neural Information Processing Systems},
vol. \bseriesno{33},
pp. \bfpage{12779}--\blpage{12791}.
\bpublisher{Curran Associates, Inc.},
\blocation{Red Hook, NY, USA}
(\byear{2020})
\end{bchapter}
\endbibitem

\bibitem[\protect\citeauthoryear{Amari and Nagaoka}{2000}]{amari-2000-methods}
\begin{bbook}
\bauthor{\bsnm{Amari}, \binits{S.}},
\bauthor{\bsnm{Nagaoka}, \binits{H.}}:
\bbtitle{Methods of Information Geometry}.
\bsertitle{Translations of Mathematical Monographs},
vol. \bseriesno{191}.
\bpublisher{American Mathematical Society and Oxford University Press},
\blocation{Providence, RI, USA and Oxford, UK}
(\byear{2000}).
\doiurl{10.1090/mmono/191}
\end{bbook}
\endbibitem

\bibitem[\protect\citeauthoryear{Hino and Yano}{2024}]{hino-2024-embedding}
\begin{barticle}
\bauthor{\bsnm{Hino}, \binits{H.}},
\bauthor{\bsnm{Yano}, \binits{K.}}:
\batitle{An embedding structure of determinantal point process}.
\bjtitle{Information Geometry}
\bvolume{7}(\bissue{2}),
\bfpage{523}--\blpage{542}
(\byear{2024})
\doiurl{10.1007/s41884-024-00156-x}
\end{barticle}
\endbibitem

\bibitem[\protect\citeauthoryear{Esary et~al.}{1967}]{esary-1967-association}
\begin{barticle}
\bauthor{\bsnm{Esary}, \binits{J.D.}},
\bauthor{\bsnm{Proschan}, \binits{F.}},
\bauthor{\bsnm{Walkup}, \binits{D.W.}}:
\batitle{Association of random variables, with applications}.
\bjtitle{The Annals of Mathematical Statistics}
\bvolume{38}(\bissue{5}),
\bfpage{1466}--\blpage{1474}
(\byear{1967})
\doiurl{10.1214/aoms/1177698701}
\end{barticle}
\endbibitem

\bibitem[\protect\citeauthoryear{Burton et~al.}{1986}]{burton-1986-invariance}
\begin{barticle}
\bauthor{\bsnm{Burton}, \binits{R.M.}},
\bauthor{\bsnm{Dabrowski}, \binits{A.R.}},
\bauthor{\bsnm{Dehling}, \binits{H.}}:
\batitle{An invariance principle for weakly associated random vectors}.
\bjtitle{Stochastic Processes and their Applications}
\bvolume{23}(\bissue{2}),
\bfpage{301}--\blpage{306}
(\byear{1986})
\doiurl{10.1016/0304-4149(86)90043-8}
\end{barticle}
\endbibitem

\bibitem[\protect\citeauthoryear{Fortuin et~al.}{1971}]{fortuin-1971-correlation}
\begin{barticle}
\bauthor{\bsnm{Fortuin}, \binits{C.M.}},
\bauthor{\bsnm{Kasteleyn}, \binits{P.W.}},
\bauthor{\bsnm{Ginibre}, \binits{J.}}:
\batitle{Correlation inequalities on some partially ordered sets}.
\bjtitle{Communications in Mathematical Physics}
\bvolume{22}(\bissue{2}),
\bfpage{89}--\blpage{103}
(\byear{1971})
\doiurl{10.1007/BF01651330}
\end{barticle}
\endbibitem

\bibitem[\protect\citeauthoryear{Joag-Dev and Proschan}{1983}]{joagdev-1983-negative}
\begin{barticle}
\bauthor{\bsnm{Joag-Dev}, \binits{K.}},
\bauthor{\bsnm{Proschan}, \binits{F.}}:
\batitle{Negative association of random variables with applications}.
\bjtitle{The Annals of Statistics}
\bvolume{11}(\bissue{1}),
\bfpage{286}--\blpage{295}
(\byear{1983})
\doiurl{10.1214/aos/1176346079}
\end{barticle}
\endbibitem

\bibitem[\protect\citeauthoryear{Pemantle}{2000}]{pemantle-2004-theory}
\begin{barticle}
\bauthor{\bsnm{Pemantle}, \binits{R.}}:
\batitle{Towards a theory of negative dependence}.
\bjtitle{Journal of Mathematical Physics}
\bvolume{41}(\bissue{3}),
\bfpage{1371}--\blpage{1390}
(\byear{2000})
\doiurl{10.1063/1.533200}
\end{barticle}
\endbibitem

\bibitem[\protect\citeauthoryear{Iyer and Bilmes}{2015}]{iyer-2015-submodular}
\begin{bchapter}
\bauthor{\bsnm{Iyer}, \binits{R.}},
\bauthor{\bsnm{Bilmes}, \binits{J.}}:
\bctitle{Submodular point processes with applications to machine learning}.
In: \beditor{\bsnm{Lebanon}, \binits{G.}},
\beditor{\bsnm{Vishwanathan}, \binits{S.V.N.}} (eds.)
\bbtitle{Proceedings of the Eighteenth International Conference on Artificial Intelligence and Statistics}.
\bsertitle{Proceedings of Machine Learning Research},
vol. \bseriesno{38},
pp. \bfpage{388}--\blpage{397}.
\bpublisher{PMLR},
\blocation{San Diego, CA, USA}
(\byear{2015})
\end{bchapter}
\endbibitem

\bibitem[\protect\citeauthoryear{Kawashima and Hino}{2025}]{kawashima-2025-family}
\begin{bchapter}
\bauthor{\bsnm{Kawashima}, \binits{T.}},
\bauthor{\bsnm{Hino}, \binits{H.}}:
\bctitle{A family of distributions of random subsets for controlling positive and negative dependence}.
In: \beditor{\bsnm{Li}, \binits{Y.}},
\beditor{\bsnm{Mandt}, \binits{S.}},
\beditor{\bsnm{Agrawal}, \binits{S.}},
\beditor{\bsnm{Khan}, \binits{E.}} (eds.)
\bbtitle{Proceedings of the 28th International Conference on Artificial Intelligence and Statistics}.
\bsertitle{Proceedings of Machine Learning Research},
vol. \bseriesno{258},
pp. \bfpage{64}--\blpage{72}.
\bpublisher{PMLR},
\blocation{Mai Khao, Thailand}
(\byear{2025})
\end{bchapter}
\endbibitem

\bibitem[\protect\citeauthoryear{Wainwright and Jordan}{2008}]{wainwright-2008-graphical}
\begin{barticle}
\bauthor{\bsnm{Wainwright}, \binits{M.J.}},
\bauthor{\bsnm{Jordan}, \binits{M.I.}}:
\batitle{Graphical models, exponential families, and variational inference}.
\bjtitle{Foundations and Trends in Machine Learning}
\bvolume{1}(\bissue{1--2}),
\bfpage{1}--\blpage{305}
(\byear{2008})
\doiurl{10.1561/2200000001}
\end{barticle}
\endbibitem

\bibitem[\protect\citeauthoryear{M{\"u}ller and Stoyan}{2002}]{muller-2002-comparison}
\begin{bbook}
\bauthor{\bsnm{M{\"u}ller}, \binits{A.}},
\bauthor{\bsnm{Stoyan}, \binits{D.}}:
\bbtitle{Comparison Methods for Stochastic Models and Risks}.
\bsertitle{Wiley Series in Probability and Statistics}.
\bpublisher{John Wiley \& Sons},
\blocation{Chichester, UK}
(\byear{2002})
\end{bbook}
\endbibitem

\bibitem[\protect\citeauthoryear{Horn and Johnson}{2012}]{horn-2012-matrix}
\begin{bbook}
\bauthor{\bsnm{Horn}, \binits{R.A.}},
\bauthor{\bsnm{Johnson}, \binits{C.R.}}:
\bbtitle{Matrix Analysis},
\bedition{2}nd edn.
\bpublisher{Cambridge University Press},
\blocation{Cambridge, UK}
(\byear{2012}).
\doiurl{10.1017/CBO9781139020411}
\end{bbook}
\endbibitem

\bibitem[\protect\citeauthoryear{Christofides and Vaggelatou}{2004}]{christofides-2004-connection}
\begin{barticle}
\bauthor{\bsnm{Christofides}, \binits{T.C.}},
\bauthor{\bsnm{Vaggelatou}, \binits{E.}}:
\batitle{A connection between supermodular ordering and positive/negative association}.
\bjtitle{Journal of Multivariate Analysis}
\bvolume{88}(\bissue{1}),
\bfpage{138}--\blpage{151}
(\byear{2004})
\doiurl{10.1016/S0047-259X(03)00064-2}
\end{barticle}
\endbibitem

\bibitem[\protect\citeauthoryear{Cover and Thomas}{2006}]{cover-1999-elements}
\begin{bbook}
\bauthor{\bsnm{Cover}, \binits{T.M.}},
\bauthor{\bsnm{Thomas}, \binits{J.A.}}:
\bbtitle{Elements of Information Theory},
\bedition{2}nd edn.
\bpublisher{John Wiley \& Sons},
\blocation{Hoboken, NJ, USA}
(\byear{2006}).
\doiurl{10.1002/047174882X}
\end{bbook}
\endbibitem

\bibitem[\protect\citeauthoryear{Meyer and Strulovici}{2012}]{meyer-2012-increasing}
\begin{barticle}
\bauthor{\bsnm{Meyer}, \binits{M.}},
\bauthor{\bsnm{Strulovici}, \binits{B.}}:
\batitle{Increasing interdependence of multivariate distributions}.
\bjtitle{Journal of Economic Theory}
\bvolume{147}(\bissue{4}),
\bfpage{1460}--\blpage{1489}
(\byear{2012})
\doiurl{10.1016/j.jet.2011.09.001}
\end{barticle}
\endbibitem

\end{thebibliography}

\end{document}